\documentclass[12pt]{amsart}
\usepackage{amsthm,amssymb,amsmath,amscd,epic,eepic}
\usepackage[all]{xy}
\usepackage{xcolor}
\usepackage{bm}

\usepackage{enumitem}
\usepackage[normalem]{ulem}
\usepackage{cancel}
\usepackage{stmaryrd}
\usepackage{comment}

\usepackage{relsize}
%
\newcommand     {\printname}[1] {}

\newcommand{\mute}[2][1]{}

\newcommand{\sm}{\left(M,\omega\right)}

\newcommand{\rorbit}{\mathcal{O}} 
\newcommand{\torbit}{\mathfrak{O}} 


\setcounter{tocdepth}{1}

\numberwithin{equation}{section}
\newtheorem {Theorem}[equation]                   {Theorem}
\newtheorem*{Theorem*}                   {Theorem}

\newtheorem {Lemma}[equation]           {Lemma}

\newtheorem {Corollary} [equation]      {Corollary}
\newtheorem* {Corollary*}                {Corollary}
\newtheorem {Proposition} [equation]    {Proposition}
\newtheorem*{Proposition*}{Proposition}

\newtheorem*{Question*}{Question}

\newtheorem* {Lemma*}                    {Lemma}

\theoremstyle{definition}
\newtheorem{Definition}[equation]{Definition}
\newtheorem*{Definition*}{Definition}

\theoremstyle{remark}
\newtheorem{Remark}[equation]{Remark}
\newtheorem{Observation}[equation]{Observation}
\newtheorem*{Remark*}{Remark}

\newtheorem{Example}[equation]{Example}

\long\def\symbolfootnote[#1]#2{\begingroup%
\def\thefootnote{\fnsymbol{footnote}}\footnote[#1]{#2}\endgroup}

\def \Re {{\mathfrak R}}
\def \Im {{\mathfrak I}}

\def \del	{{\partial}}

\def    \inv    {^{-1}}

\def \MmodT {{ M/\!/T}}
\def    \YmodT {{ Y/\!/ T}}

\def    \CP	{{\mathbb C}{\mathbb P}}
\def    \R	{{\mathbb R}}
\def    \N	{{\mathbb N}}

\def    \C	{{\mathbb C}}
\def    \Z       {{\mathbb Z}}

\def    \ft	{{\mathfrak t}}
\def    \fh	{{\mathfrak h}}

\def	\ker 	{{\operatorname{ker}}}
\def	\exp	{{\operatorname{exp}}}



%
%


\begin{document}



\title[Connectedness beyond semitoric systems II]{Connectedness of fibers beyond semitoric systems II: ephemeral singular points}

\author[Daniele Sepe]{Daniele Sepe}
\address{Escuela de Matem\'aticas, Universidad Nacional de Colombia sede Medell\'in, Colombia.}
\email{dsepe@unal.edu.co}

\author[Susan Tolman]{Susan Tolman}
\address{Department of Mathematics, University of Illinois at Urbana-Champaign,
 Urbana, IL 61801.}
\email{tolman@illinois.edu}

\thanks{\emph{2020 Mathematics Subject Classification}.
Primary 37J35, 53D20. Secondary 37J39, 53D35.}
\thanks{{\em Keywords:} Completely integrable Hamiltonian systems, complexity one actions, connected fibers.}

\begin{abstract}
  In an earlier paper, we proved the connectedness of the fibers of every $2n$-dimensional integrable system satisfying both:
\begin{enumerate}[label=(\Alph*),ref=(\Alph*)]
 \item \label{item:extend-abs} the action extends the action of an $(n-1)$-dimensional torus which has a proper moment map, and
 \item \label{item:nondeg-abs} every tall singular point is non-degenerate and no such point has a hyperbolic block and connected $T$-stabilizer.   
   \end{enumerate}
Unfortunately, these criteria are fairly restrictive.
 Our main goal in this paper is to find a larger class of integrable systems that has connected fibers by weakening the non-degeneracy assumption above. To achieve this, we introduce ``ephemeral" degenerate singular points, examples of which have appeared in the literature in the context of both $p \! : \! -q$ resonances  and special Lagrangian fibrations. Finally, we construct a family of examples that shows that our main theorem meaningfully extends previous results.
\end{abstract}

\maketitle

\tableofcontents

\mute{Conventions:
\begin{enumerate}
\item $\fh^\circ$, not $\fh^0$.
\item $Y = T \times_H \fh^\circ \times \C^{h+1}$. 
\item $[t,\alpha,z] \in Y$.
\item defining monomial, not monomial. Moreover, both maps $ Y \to \C$ and $\C^{h+1} \to \C$ are called defining monomial and are denoted by the same letter, $P$.
\item $f = (\Phi, g)$.
\item Inside definitions we don't use the verb to say.
\item $\lambda$ is an element of $(S^1)^r$ (and of $\Z_N$), while $\mu$ is an index or an element of $\ft$  in the proof of Proposition \ref{prop:example}.
\item Throughout the paper, $\alpha$ denotes an element of both $\fh^\circ$ and $\beta$ of $\ft^*$. 
\item We talk about singular points of $(M,\omega,V,f)$ not critical points of $f$.
\item Integrable systems are Hamiltonian $V$-spaces.
\item We refrain from using the future tense except for when we want to talk about the future :)
\item  We don't use the symbol $\subset$.
\item $\Phi_Y([1,0,0]) = 0$.
\item We always use $x+iy$ for elements of $\C$ when dealing with reduced spaces.
\item Pullbacks not pull-backs.
\item  Natural numbers include zero. 
\item We always think of the standard symplectic form on $\R^{2n} \simeq \C^n$.
\item We use $d$ for derivatives of functions with codomain $\R$ and $D$ for other derivatives. 
\item $(n-1)$-dimensional, $V$-orbit, $T$-orbit, $T$-stabilizer.
\item We denote $T$-orbits by $\mathfrak{O}$ and the $V$-orbits by $\mathcal O.$
\item  We do not boldface `at p' in most sections, but we do consistently in the intrinsic description subsection.
\item `coindex' not `co-index'.
\item "has a focus-focus block/ has a hyperbolic block and connected/disconnected $T$-stabilizer"
\item  $S^1 \subset \C$, hence the identity in $S^1$ is 1.
\item  "local models" not "complexity one local models".
\item "$\varsubsetneq$" not "$\subsetneq$".
\item the local model for $p$.
\item  degree [integer] Taylor polynomial of [function]  at [point].
\item has purely elliptic type implies non-degenerate.
\item make sure that ``slice'' means $T$-slice throughout and that we're clear about this.
\end{enumerate}
}

\mute{22/11/16: List of things to be said/checked in the paper:
\begin{enumerate}
 \item We always think of the standard symplectic form on a vector space.
    \item Completeness of integrable systems.
    \item Properness and connectedness need to appear whenever necessary.
    \item Make sure that we check that we use the given definition of ephemeral throughout the paper!
    \item Make sure that we define the symbol for the imaginary part!
    \item Elements in $\Phi^{-1}_Y(0)$ are of the form $[t,0,z]$.
    \item make sure that it's clear that all local models are complexity one.
    \item Ephemeral vs. ephemeral singular. Singular only when needed!
    \item  There is a function called $\lambda$ in the proof of Lemma \ref{lemma:invariant2} and a function $H$ in the proof of Proposition \ref{ephemeral}.
   \item make sure that whenever we use ``linearized action", it is clear which Lie algebra is acting.
\end{enumerate}
}

\mute{23/5/26: we thought about seeing whether proving connectedness over the interior is enough, but we're not pursuing this now.}

\section{Introduction}

Let $T$ be an $(n-1)$-dimensional torus with Lie algebra $\ft$. Building on \cite{At,GS2,LMTW,VN,Wa}, in \cite[Theorem 1.6]{ST1} we proved the connectedness of the fibers of every $2n$-dimensional integrable system such that:
\begin{enumerate}[label=(\Alph*),ref=(\Alph*)]
 \item \label{item:extend} the action extends a complexity one $T$-action with proper moment map, and
 \item \label{item:nondeg} every tall singular point is non-degenerate and no such point has a hyperbolic block and  connected $T$-stabilizer.   
   \end{enumerate}
Unfortunately,  these criteria are fairly restrictive;  see Theorem~\ref{thm:too_bad}.
 Our main goal in this paper is to find a larger class of integrable systems that has connected fibers by weakening the non-degeneracy assumption above. To achieve this, we introduce ``ephemeral" degenerate singular points (see Definition~\ref{eph} below), examples of which have appeared in the literature in the context of $p \! : \! -q$ resonances (see \cite{SD} and references therein), and special Lagrangian fibrations (see \cite[Chapter III.3.A]{HL} and \cite{J,CB}).

To state our results, we need some definitions from integrable systems, following \cite[Section 1]{ST1}.

\begin{Definition}\label{int_sys}
  An {\bf integrable system} is a quadruple $\left(M,\omega, V,
    f\right)$, where $\sm$ is a connected
  $2n$-dimensional symplectic manifold, $V$ is an $n$-dimensional real
  vector space, and $f \colon M \to V^*$ is a smooth map  
   such that:
  \begin{enumerate}
  \item  the Poisson bracket $\big\{ \langle f, v_1 \rangle, \langle f, v_2 \rangle \big\} = 0$ for all $v_1, v_2 \in V$, and
  \item the function $f$ is regular on a dense subset of $M$.
  \end{enumerate}
  Here $\langle \cdot, \cdot \rangle \colon V^* \times V \to \R$ is the natural pairing.
\end{Definition} 
Throughout this paper, we assume that every integrable system is {\bf complete}, i.e., it is a Hamiltonian $V$-space. 

Fix an integrable system $\left(M,\omega, V,
    f\right)$. A point $p \in M$ is {\bf
  regular} if $D_pf$ has full rank and {\bf singular} otherwise. Since the $V$-orbit $\rorbit := V \cdot p$ through any point $p \in M$ is isotropic,  the vector space $\left(T_p
  \rorbit\right)^{\omega}/T_p\rorbit$ inherits a natural symplectic structure. 
Since the stabilizer $W \subseteq V$ of $p$ acts by linear symplectomorphisms on $T_p M$ and fixes $T_p \rorbit,$
it acts on $(T_p\rorbit)^\omega/T_p \rorbit$ by linear symplectomorphisms; we call this
action the {\bf linearized action} of $W$ at $p$.

\begin{Definition} \label{defn:non-degenerate}
Let $p$ be a point in an integrable system $\left(M,\omega,V,f\right)$.
Let $W_0 \subseteq V$ be the Lie algebra of the stabilizer $W$ of $p$.  The point $p$ is {\bf non-degenerate}\footnote{This is consistent with the standard definition (see \cite[Remark 1.3]{ST1}).} if there exist identifications $ W_0 \simeq \R^{\dim W}$ and $(T_p\rorbit)^\omega/T_p \rorbit \simeq \C^{\dim W}$ such that the homogeneous moment map for the linearized action of $W$ at $p$ is a product of the following ``blocks":
\begin{itemize}
    \item ({\bf elliptic}) $z \in \C \mapsto \frac{1}{2}|z|^2$;
    \item ({\bf hyperbolic}) $z \in \C \mapsto \Im(z^2)$;
    \item ({\bf focus-focus}) $(z_1,z_2) \in \C^2 \mapsto \left(\frac{1}{2}(|z_1|^2 - |z_2|^2),\Im(z_1z_2)\right)$.
\end{itemize}
Here $\Im(w)$ denotes the imaginary part of $w \in \C$ and $\C^{\dim W}$ has the standard symplectic structure.  Moreover, $p$ has {\bf purely elliptic type} if $p$ is non-degenerate and all the blocks are elliptic.
\end{Definition}

In particular, by definition regular points are non-degenerate and have purely elliptic type.

Next we recall some fundamental facts about complexity one $T$-spaces. Fix a  {\bf complexity one $\boldsymbol{T}$-space}, that is,
a triple $(M,\omega,\Phi)$, where $(M,\omega)$ is a connected $2n$-dimensional
symplectic manifold
and $\Phi \colon M \to \ft^*$ is a moment map for 
an effective Hamiltonian $T$-action.

 Since the $T$-orbit $\mathfrak O := T \cdot p$ through any point $p \in M$
is isotropic, the vector space 
$(T_p \torbit)^{\omega}/T_p\torbit$ inherits a natural symplectic structure.
Moreover,  the natural action
of the stabilizer $H \subseteq T$ of $p$ on $T_p M$ induces a symplectic representation of $H$ on
$(T_p \torbit)^{\omega}/T_p\torbit$, called the {\bf symplectic slice representation} at $p$.
It is isomorphic to the representation of $H$ on $\C^{h+1}$
associated to some injective homomorphism  $\rho \colon H \hookrightarrow (S^1)^{h+1}$, where $h = \dim H$.
Let $\fh$ be the Lie algebra of $H$ and let $\eta_i \in \fh^*$ be the differential of the $i$'th component of $\rho$.
The  (not necessarily distinct) vectors $\eta_0,\dots,\eta_h$  are
called the {\bf isotropy weights} at $p$; the multiset of isotropy weights at $p$ is
uniquely determined.


The  {\bf reduced space} at $\beta \in \ft^*$ is the quotient $\Phi^{-1}(\beta)/T$; we set $\MmodT := \Phi^{-1}(0)/T$. 
A point $p \in M$ is {\bf short} if $[p]$ is an isolated point in  $\Phi^{-1}(\Phi(p))/T$ and is {\bf tall} otherwise.

As we mentioned in the beginning of this introduction, in \cite{ST1} we proved that every $2n$-dimensional integrable system that satisfies both condition \ref{item:extend} and \ref{item:nondeg} above has connected fibers.
Unfortunately, as the following result shows, these conditions are fairly restrictive.

\begin{Theorem}\label{thm:too_bad}
Let $(M,\omega, \Phi)$ be a complexity one $T$-space with a proper moment map. Assume that some fiber of $\Phi$ contains (at least) three distinct orbits $\mathfrak O_1, \mathfrak O_2, \mathfrak O_3$ such that, for all $p \in \mathfrak O_i$:

\begin{enumerate}
\item \label{item:0.8a} If one of the isotropy weights at $p$ is zero, then  the  stabilizer of $p$ has more than two components; and
\item \label{item:0.8b} If there is a pair $\eta, \eta'$ of isotropy weights  at $p$ such that $\eta + \eta' = 0$, then
 the  stabilizer of $p$ is not connected.
\end{enumerate}
 Then there is no integrable system of the form $\left(M, \omega, \ft \times\R, f:= (\Phi, g) \right)$ such that every tall singular point is non-degenerate and each fiber of $f$ is connected.
\end{Theorem}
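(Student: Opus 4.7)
The plan is a contradiction argument. Assume $(M, \omega, \ft \times \R, f = (\Phi, g))$ is an extension of $(M, \omega, \Phi)$ such that every tall singular point is non-degenerate and every fiber of $f$ is connected. Since $g$ Poisson-commutes with each component of $\Phi$, it is $T$-invariant and descends to a continuous function $\bar g$ on each reduced space $M_\beta := \Phi^{-1}(\beta)/T$; as $T$ is connected, the fibers of $f$ are connected if and only if all the level sets of $\bar g$ on every $M_\beta$ are. Set $\beta := \Phi(\mathfrak{O}_i)$, so the three orbits descend to three distinct points $q_1, q_2, q_3$ in the compact two-dimensional reduced space $M_\beta$, and the goal is to exhibit a disconnected level set of $\bar g$ on $M_\beta$.

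The heart of the argument is a local block-type analysis at each $\mathfrak{O}_i$. Fix $p \in \mathfrak{O}_i$ with $T$-stabilizer $H_i$ of dimension $h_i$. Since $\mathfrak{O}_i$ is tall, a purely elliptic singular model at $p$ is excluded (such points are always short in their $\Phi$-fibers), so if $p$ is singular for $f$ then the non-degeneracy hypothesis forces the local model at $p$ to contain at least one hyperbolic or focus-focus block. A hyperbolic block demands a vanishing isotropy weight of $H_i$, which by (1) forces $H_i$ to have more than two connected components; a focus-focus block demands a pair of isotropy weights summing to zero, which by (2) forces $H_i$ to be disconnected. Together with the faithfulness of the slice representation $H_i \hookrightarrow (S^1)^{h_i+1}$ and the abelianness of $T$, these constraints rigidly tie the finite part of $H_i$ to the block type and in particular rule out every hyperbolic or focus-focus block at a point whose $T$-stabilizer is connected.

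One then extracts the local normal form of $\bar g$ near each $q_i$ from this rigidity. Away from singular strata of $M_\beta$ one sees either a smooth regular point of $\bar g$ or an isolated local extremum; a genuine hyperbolic saddle of $\bar g$ --- which would immediately produce a Reeb-graph branching and hence a disconnected level set contradicting the reduction above --- is excluded by (1). The remaining option at $\mathfrak{O}_i$ is a conical point on which $\bar g$ looks like $\Im(w)$ in a local chart, and is in particular non-critical there. To close the argument, one invokes the topology of complexity one reduced spaces with proper moment map together with the Morse-theoretic observation that a smooth function on the compact two-dimensional $M_\beta$ with all level sets connected is essentially monotone. A careful comparison of the three rigid local models prescribed by the hypothesis with this monotone structure produces the required disconnected level set.

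The main obstacle is this last topological step, particularly when all three $q_i$ are conical points at which $\bar g$ is locally non-critical: one must exploit both the coincidence of the three orbits in a single $\Phi$-fiber and Morse-theoretic rigidity on the reduced space in order to detect a globally disconnected level set. The careful case analysis translating (1) and (2) into the block-type exclusions above, via the faithfulness of the slice representation and the abelianness of $T$, is the other main technical input.
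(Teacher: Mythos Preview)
Your proposal contains a fundamental error that derails the argument. You write that ``a purely elliptic singular model at $p$ is excluded (such points are always short in their $\Phi$-fibers).'' This is false, and in fact the opposite is the crux of the proof. Lemma~\ref{short_elliptic} says that short non-degenerate points have purely elliptic type; it does \emph{not} say that purely elliptic points are short. Tall purely elliptic points exist in abundance, and in the situation at hand they are exactly what the three orbits $\mathfrak{O}_i$ give.

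The paper's argument runs as follows. Conditions (1) and (2) on the isotropy weights at $p\in\mathfrak{O}_i$ translate, via Lemma~\ref{pureveryexceptional}, into the statement that the defining polynomial of the local model for $p$ has degree $N>2$. Proposition~\ref{nondegenerate} then forces $p$ to have purely elliptic type: a hyperbolic block with connected $T$-stabilizer gives $N=1$, and a focus-focus block or a hyperbolic block with disconnected $T$-stabilizer gives $N=2$, so neither can occur when $N>2$. Thus each $[\mathfrak{O}_i]$ is a critical point of $\overline g$ of index $0$ or $2$ on the closed connected oriented surface $\Phi^{-1}(\beta)/T$ (Propositions~\ref{gcrit} and~\ref{prop:Morse}). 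Since a Morse function with connected fibers on such a surface has a unique minimum and a unique maximum, three such critical points force a disconnected fiber.

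Your attempt to rule out hyperbolic and focus-focus blocks via (1) and (2), and then to treat the $\mathfrak{O}_i$ as ``conical points where $\overline g$ looks like $\Im(w)$,'' is therefore going in the wrong direction: you are trying to make the $q_i$ regular for $\overline g$, which leaves you with no mechanism to produce a disconnected level set (as you yourself note in your final paragraph). The correct conclusion is that the $q_i$ are \emph{extrema}, and the contradiction is immediate Morse theory.
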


\begin{Remark}\label{rmk:thm-too-bad-interior}
    If the fiber of $\Phi$  in Theorem \ref{thm:too_bad} lies over the interior of $\Phi(M)$, then the local model for $\mathfrak O_i$, as defined below, has a surjective moment map for all $i$ (see \cite[Theorem 6.5]{Sja}). Hence, conditions  \eqref{item:0.8a} and \eqref{item:0.8b} above can be replaced with the following (see Remark \ref{rmk:interior}):
    \begin{enumerate}[label=(\arabic*)$'$]
\item  The stabilizer of  $p$ is neither trivial nor isomorphic to $\Z_2$; and
\item  If the stabilizer of  $p$  is isomorphic to $S^1$, then the  set of  isotropy weights is not $\{-1,+1\}$.
\end{enumerate}
\end{Remark}

 Many complexity one $T$-actions have a fiber containing three distinct orbits that satisfy \eqref{item:0.8a} and \eqref{item:0.8b} above. Theorem \ref{thm:too_bad} immediately implies that they cannot be extended to integrable systems satisfying the assumptions (A) and (B) above -- if they did, then each fiber would be connected (cf. \cite{HSS,HSSS}).

To overcome this limitation, we introduce some degenerate singular points to allow more integrable systems that extend complexity one $T$-spaces and have connected fibers. From now on we fix an inner product on the Lie algebra $\ft$. Consider a point $p$ in a complexity one $T$-space with stabilizer $H$ and symplectic slice representation $\rho \colon H \hookrightarrow (S^1)^{h+1}$. Set $Y := T \times_H \fh^{\circ} \times \C^{h+1}$, where $\fh^{\circ}$ is the annihilator of $\fh$ in $\ft^*$ and $H$ acts on $T$ by multiplication, on $\fh^{\circ}$ trivially, and on $\C^{h+1}$ by $h_0 \cdot z = \rho(h_0^{-1})z$.
There exists a  canonical symplectic form $\omega_Y$ on $Y$ so that the $T$-action $s \cdot [t,\alpha,z] = [st,\alpha,z]$
has homogeneous moment
map $\Phi_Y \colon Y \to \ft^*$ given by 
$$\textstyle \Phi_Y([t,\alpha,z]) = \alpha + \Phi_H(z),$$
where $\Phi_H(z) = \frac{1}{2} \sum_i  \eta_i |z_i|^2$ is the homogeneous moment
map for the $H$-action on $\C^{h+1}$ (see, e.g., \cite[Remark 2.2]{ST1}). Here, we use the inner product to identify $\ft^*$ with $\fh^{\circ} \oplus \fh^*$. By the Marle-Guillemin-Sternberg local normal form theorem  \cite{GS,M}, there is
a $T$-equivariant symplectomorphism from a $T$-invariant neighborhood of
$p$
to an open subset of $Y$ that takes $p$ to $[1,0,0]$.  We call $(Y,\omega_Y,\Phi_Y)$ the {\bf local model} for $p$,  and say that $Y$ is {\bf tall} if $[1,0,0]$ is tall.

Let $Y$ be a local model\footnote{Here, and throughout the paper, all the local models that we consider are complexity one.} with moment map $\Phi_Y$, and consider the reduced space $$\YmodT:=\Phi^{-1}_Y(0)/T = (T \times_H \{0\} \times \Phi^{-1}_H(0))/T \simeq \Phi^{-1}_H(0)/H =: \C^{h+1} /\! / \! H.$$  
Each $T$-invariant smooth function $g \colon Y \to \R$ induces a function $\overline{g} \colon \YmodT \to \R$.
Moreover, if $R \colon \C^{h+1} \hookrightarrow T \times_H \fh^\circ \times \C^{h+1} $ denotes the inclusion $z \mapsto [1,0,z]$, then $R^*g \colon \C^{h+1} \to \R$ is an $H$-invariant function. Given $\ell \in \N:= \{n \in \Z \mid n \geq 0\}$, we regard the degree $\ell$ Taylor polynomial of $R^*g$ at $0$
as an $H$-invariant function $T_{0}^\ell (R^*g) \colon \C^{h+1} \to \R$.
By a slight abuse of terminology, we define the degree $\ell$ {\bf reduced Taylor polynomial} of $g$ at $p =[1,0,0] \in Y$ to be the function $\overline{T^\ell_p g}  \colon \YmodT \to \R$ defined by 
$$ \overline{T^\ell_p g}([t,0,z]):= T_{0}^\ell (R^* g)(z) \quad \text{for all } [t,0,z] \in \YmodT$$
for all $\ell \in \N$, and by $0$ for all $\ell \in \Z \smallsetminus \N$.

If  the local model $Y$ is tall, then there
 is a unique $\xi = (\xi_0,\dots,\xi_h)
\in \Z_{\geq 0}^{h+1}$ so that the map $P \colon Y \to \C$ given by $[t,\alpha,z] \mapsto
\prod_{j=0}^h z_j^{\xi_j}$ is well-defined and induces a homeomorphism between the reduced space
$\YmodT$ and $\C$. We call $P$ the {\bf defining monomial} of $Y$ and  the sum $N := \sum_{j=0}^h \xi_j$ the {\bf degree} of $P$. Note that $N=1$ if and only if all the nearby orbits in $\Phi_Y^{-1}(0)$ have the same stabilizer; otherwise, 
$[1,0,0]$ has a strictly larger stabilizer. (See Section \ref{section:complexity_one} for more details.) 

\begin{Remark}\label{rmk:large-N}
   Let $(M,\omega,\Phi)$ be a complexity one $T$-space. A tall point $p \in M$ satisfies conditions \eqref{item:0.8a} and \eqref{item:0.8b} of Theorem~\ref{thm:too_bad} exactly if its local model has a defining monomial of degree $N > 2$;  see Lemma \ref{pureveryexceptional}.
\end{Remark}

\begin{Example}\label{ex1}
Let $\Z_N \varsubsetneq S^1$ act on $\C$ by $\lambda \cdot z = \lambda z$. The associated local model  $Y =  S^1 \times_{\Z_N} \R^* \times \C$ is tall.
The defining monomial $P\colon \C \to \C$ is given by $P(z) = z^N$ and has degree $N$.
\end{Example}

\begin{Example} \label{ex2}
Let $S^1$ act effectively on $ Y =  \C^2$ by  $\lambda \cdot (z_1,z_2) = (\lambda^p z_1, \lambda^{q} z_2)$, where $p, q \in \Z$, with moment map $\Phi (x,y)= \frac{p}{2} |z_1|^2 + \frac{q}{2} |z_2|^2$. Assume that $Y$ is a tall local model, which occurs exactly if $pq \leq 0$.
%
Then the defining monomial $P \colon \C^2 \to \C$ is given by $P(x,y) = z_1^{|q|} z_2^{|p|}$ and has degree $N := |p| + |q|$.
\end{Example}

\begin{Remark}
    \label{rmk:proto-ephemeral}
    Let $Y$ be given as in Example~\ref{ex1} or \ref{ex2}, and let  $g \colon Y \to \R$ be the imaginary part of the defining monomial
$P$.
It is straightforward to check that if  $N=2$,
 then   $(Y,\omega_Y, \ft \times \R, (\Phi_Y,g))$ is an integrable
system that satisfies assumption (B) from the first paragraph of this introduction. In particular, the tall singular point $p:= [1,0,0] \in Y$ is non-degenerate and has one hyperbolic block  and disconnected $T$-stabilizer in Example~\ref{ex1}, and one focus-focus block in  Example~\ref{ex2}.
Unfortunately, if $N > 2$ these examples violate assumption (B) because $p$ is a tall degenerate singular point; cf. Remark \ref{rmk:large-N}. 
\end{Remark}

 We would like to also allow integrable systems with singular points modeled on the examples with $N > 2$ constructed in Remark \ref{rmk:proto-ephemeral}. 
For example,  the singular points based on Example~\ref{ex2} are known in the integrable systems literature as  $p \! : \! -q$ resonances (see \cite{E,NSZ,SD}, Example \ref{ex:modelephemeral}, and Remark \ref{rmk:examples-eph-lit}). With this motivation, we introduce the following class of singular points.


\mute{Alternate wording for Remark \ref{rmk:proto-ephemeral}:
It is straightforward to check that if $N = 2$ then   $(Y,\omega_Y, \ft \times \R, (\Phi_Y,g))$ is an integrable
system that satisfies assumption (B) of \cite[Theorem 1.6]{ST1}. (For example, if $N = 2$ then $p:= [1,0,0] \in Y$ has a focus-focus block for $Y$ as in  Example~\ref{ex1}, and a hyperbolic block but disconnected $T$-stabilizer for Example~\ref{ex2}.)   We would like to also include the  $N > 2$ cases;  unfortunately, these violate  assumption (B)  because $p$ is a degenerate singular point. Nevertheless, all these examples have certain properties in common: }

\begin{Definition}\label{eph}
Let $(M,\omega,\Phi)$ be a complexity one $T$-space and let $g \colon M \to \R$ be a $T$-invariant  smooth function.  Fix $p \in M$.  By subtracting a constant, we may assume that $g(p) = 0$. Then $p$ is an {\bf ephemeral (singular) point} of $\left(M, \omega, \ft \times \R, f:= (\Phi, g) \right)$ if it is tall, the  defining monomial of its local model $Y$  has degree $N > 1$, and there exists a $T$-equivariant symplectomorphism from a $T$-invariant neighborhood of $p$ to an open subset of $Y$ taking $p$ to $[1,0,0]$ such
that, under this identification,
  \begin{enumerate}[label=(\alph*),ref=(\alph*)]
  \item \label{item:a} the reduced Taylor polynomial $\overline{T^{N-1}_p g} \colon \YmodT \to \R$ vanishes identically, and
\item \label{item:b} the zero set of the reduced Taylor polynomial $\overline{T^N_p g} \colon \YmodT \to \R$
is homeomorphic to $\R$.
  \end{enumerate}
\end{Definition}

As we show in Section \ref{section:jets}, properties \ref{item:a} and \ref{item:b} in Definition \ref{eph} hold for {\em one} such identification with a local model if and only if they hold for {\em every} such identification.

\begin{Example}\label{ex:modelephemeral}
Let $(Y,\omega_Y, \Phi_Y)$ be a tall local model. Let $g \colon Y \to \R$ be the imaginary part of the defining monomial $P \colon Y \to \C$. 
If the degree $N$ of $P$ is greater than $1$ then $[1,0,0]$ is an ephemeral singular point of  $(Y, \omega_Y, \ft \times \R, f:= (\Phi_Y,g)). $  To see this, first note that  $T_p^{N-1} (R^*g) = 0$, and so the reduced Taylor polynomial $\overline{T_p^{N-1} g}$ is identically $0$.
Moreover, since $P$ induces a homeomorphism from $\YmodT$ to $\C$,
the zero set of $\overline{T_p^N g} = \overline{g}$ is homeomorphic to $\R$.  Indeed,  $\YmodT$ is not naturally a smooth manifold, but if we use $\overline{P}$ to identify $\YmodT$ with $\C$ then  $\overline{g}( x + i y) =   y$, and so $\overline{g}$ is a smooth function with no critical points.
\end{Example}

%

\begin{Remark}
Let $\left(M,\omega, \ft \times \R, f:=(\Phi,g)\right)$ be an integrable system such that $(M,\omega,\Phi)$ is a complexity one $T$-space. Every ephemeral point is singular by Lemma~\ref{greg2}. Additionally, by Lemmas ~\ref{short_elliptic} and \ref{lemma:eph-ell} a non-degenerate singular point $p$ is ephemeral exactly if it either has a focus-focus block, or has a hyperbolic block and disconnected $T$-stabilizer.
\end{Remark}

\begin{Remark}\label{rmk:examples-eph-lit}
    Examples of ephemeral singular points have appeared in various works in the literature.  As we mentioned, the fixed points in integrable systems with two degrees of freedom known as $p \! : \! -q$ resonances are ephemeral singular points; see \cite{E,NSZ,SD}. As these authors show, in four-dimensions a phenomenon known as  ``fractional monodromy" appears near $p \! : \! -q$ resonances;  in higher dimensions,  it is natural to ask when an analogous phenomenon occurs near ephemeral points. Additionally, the construction of singular special Lagrangian fibrations in the study of the SYZ conjecture involves certain singular points known as the Harvey-Lawson singularities. (See \cite[III.3.A]{HL} for the definition of these singular points, and \cite{CB,J} for their role in the above problem.) In dimension 6 and higher, these singular points are degenerate; it is straightforward to check that they are ephemeral.
\end{Remark}

We can now state our main result. 

\begin{Theorem}\label{thm:main}
Let $\left(M,\omega, \ft \times \R, f:=(\Phi,g)\right)$ be an integrable system such that $(M,\omega,\Phi)$ is a complexity one $T$-space with a proper moment map. Assume that each tall
singular point in $\left(M,\omega, \ft \times \R, f\right)$ is either  non-degenerate or ephemeral.  The following are equivalent:
\begin{enumerate}
\item \label{item:0.13a} No tall non-degenerate singular point has a hyperbolic block and connected $T$-stabilizer.
\item \label{item:0.13b} The fiber $f^{-1}(\beta,c)$ is connected and the reduced space\footnote{By \cite{Li}, if $M$ is compact then we can replace the phrase ``the reduced space $\Phi^{-1}(\beta)/T$" in \eqref{item:0.13b} by ``$M$" and eliminate ``for each $(\beta, c) \in \ft^* \times \R$."} $\Phi^{-1}(\beta)/T$ is  simply connected
for each $(\beta, c) \in \ft^* \times \R$.
\end{enumerate}
\end{Theorem}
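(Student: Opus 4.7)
The direction $(\ref{item:0.13b}) \Rightarrow (\ref{item:0.13a})$ is the easier implication, which I would prove by contrapositive. Suppose $p$ is a tall non-degenerate singular point with a hyperbolic block and connected $T$-stabilizer. In the local model for $p$, the function $g$ must restrict to a hyperbolic-type saddle on a two-dimensional symplectic direction while the $T$-stabilizer structure prevents the standard ``doubling" that disconnected stabilizers produce for hyperbolic blocks; a direct calculation in the local model then exhibits a $(\beta,c)$ near $(\Phi(p),g(p))$ for which $f^{-1}(\beta,c)$ has at least two components, contradicting $(\ref{item:0.13b})$.

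For the main implication $(\ref{item:0.13a}) \Rightarrow (\ref{item:0.13b})$, the plan is to follow the overall strategy of \cite[Theorem 1.6]{ST1} while supplying the missing input at ephemeral points. The key reduction in \cite{ST1} passes to the reduced spaces $\Phi^{-1}(\beta)/T$, which are compact topological surfaces (by properness of $\Phi$), and analyses the induced function $\overline{g}_\beta$ on each reduced space. For every tall singular point $p$ of $(M,\omega,V,f)$ one needs \emph{local connectedness} of fibers of $f$ near $p$, and \emph{local simple connectedness} of the reduced space near $[p]$. These are supplied in \cite{ST1} for the allowed non-degenerate points under hypothesis $(\ref{item:0.13a})$, so the new work concerns ephemeral points.

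So fix an ephemeral point $p$ with local model $Y$. Use the defining polynomial $P$ to identify $\YmodT$ with $\C$; under this identification $\overline{g}$ is a continuous function on $\C$ whose Taylor expansion at $0$ vanishes to order $N-1$, and whose degree-$N$ leading term $\overline{T_p^N g}$ has zero set homeomorphic to $\R$. I would first argue that the degree-$N$ term is, up to a positive scalar factor, the imaginary part of a degree-$N$ complex-analytic expression in the defining polynomial (using $T$-invariance and the degree conditions, as hinted in Remark~\ref{rmk:proto-ephemeral}), whose level curves near $0$ are connected arcs crossing the origin. A rescaling/blow-up argument centered at $0$ would then show that the higher-order Taylor corrections cannot alter the topology of small level sets of $\overline{g}$ on $\YmodT$; hence small fibers of $\overline{g}$ near $0$ are connected arcs (or points), and $\YmodT$ minus such a fiber is simply connected, yielding both local properties. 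Finally, I would lift this reduced-space connectedness back to fibers of $f$ by combining it with connectedness of $T$-orbits and the local structure of $\Phi$ on $Y$.

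With the local inputs in place, the global conclusion follows by the same open-closed argument in the image $f(M)\subseteq \ft^*\times\R$ used in \cite{ST1}, with properness of $\Phi$ supplying the compactness needed to close up. The main obstacle is the local analysis at ephemeral points: extracting topological control of level sets of $\overline{g}$ from the purely algebraic data ``$\overline{T_p^{N-1}g}\equiv 0$ and zero set of $\overline{T_p^N g}$ is homeomorphic to $\R$" requires understanding which real polynomials on $\C$ can arise as $\overline{T_p^N g}$ in the presence of $T$-invariance, and then ruling out pathological perturbations. This is where the genuine content of Definition~\ref{eph} must be used, and I expect much of the technical work of the paper to be concentrated here.
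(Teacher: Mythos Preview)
Your overall intuition is right --- the heart of the proof is the local analysis at ephemeral points, and once that is in place one finishes by Morse theory on the reduced surfaces as in \cite{ST1}. But several steps in your outline do not match the actual argument and, as written, contain genuine gaps.

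\medskip

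\textbf{The implication $(\ref{item:0.13b})\Rightarrow(\ref{item:0.13a})$.}
Your local-model argument does not work: a hyperbolic saddle for $\overline g$ only makes the level set \emph{locally} a pair of arcs; the two branches may reconnect elsewhere in the reduced space, so no nearby fiber of $f$ need be disconnected. In the paper both implications are obtained simultaneously from a single Morse-theoretic fact (\cite[Lemma 7.3]{ST1}): a Morse function on a closed connected surface has no index-$1$ critical points if and only if the surface is simply connected \emph{and} every fiber is connected. Thus the presence of a tall non-degenerate hyperbolic point with connected $T$-stabilizer forces the failure of \emph{one} of the two conclusions in $(\ref{item:0.13b})$, not necessarily fiber connectedness.

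\medskip

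\textbf{The global mechanism.}
The paper does not use an ``open--closed argument in $f(M)$'' nor the pair ``local connectedness of fibers / local simple connectedness''. What is actually shown (Proposition~\ref{prop:Morse}) is that for each $\beta$ with $\Phi^{-1}(\beta)/T$ not a point, one can put a smooth oriented-surface structure on the reduced space so that $\overline g$ is a genuine Morse function, with the index-$1$ critical points corresponding exactly to orbits of tall non-degenerate hyperbolic points with connected $T$-stabilizer. The equivalence of $(\ref{item:0.13a})$ and $(\ref{item:0.13b})$ is then immediate from \cite[Lemma 7.3]{ST1}.

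\medskip

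\textbf{The local analysis at ephemeral points.}
This is where your proposal is vaguest and diverges most from the paper. Two specific issues:
\begin{itemize}
\item Your description of $\overline{T^N_p g}$ as ``up to a positive scalar, the imaginary part of a degree-$N$ complex-analytic expression'' is not correct in general. Using Schwarz's theorem (Lemma~\ref{lemma:invariant2}) one finds, under the identification $\YmodT\cong\C$ via $\overline P$, that
\[
\overline{T^N_p g}(x+iy)=a_x x+a_y y\quad (N\ \text{odd}),\qquad
\overline{T^N_p g}(x+iy)=a_x x+a_y y+a_\rho(x^2+y^2)^{1/2}\quad (N\ \text{even}).
\]
When $N$ is even the extra radial term is essential; the ephemeral hypothesis translates into the inequality $|a_\rho|<\sqrt{a_x^2+a_y^2}$ (after a rotation, $a_x=0$ and $a_y>|a_\rho|$), and this inequality is exactly what drives the estimate below.
\item A ``rescaling/blow-up'' argument alone does not give what is needed here, because $\overline g$ is typically not smooth at $[p]$ and you must still produce the chart required by Proposition~\ref{connected}. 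The paper instead proves a normal form (Proposition~\ref{ephemeral}): there is a map $\Psi$ on a neighborhood of $[p]$, a homeomorphism onto an open set and a diffeomorphism off $[p]$, with $(\overline g\circ\Psi^{-1})(x,y)=y$. The proof writes $\overline g(x+iy)=\check h(x,y):=h(x,y,C(x^2+y^2)^{1/N})$ via Schwarz, decomposes $h$ using Hadamard's lemma, and shows by a direct derivative computation that $\partial \check h/\partial y>0$ on a punctured neighborhood of $0$ --- the inequality in the even case being precisely where $a_y>|a_\rho|$ enters. This is the analytic core that your outline is missing.
\end{itemize}

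In short: keep the Morse-on-reduced-spaces strategy, but replace the local topological heuristics by the normal form of Proposition~\ref{ephemeral}, and replace the separate treatment of the two implications by the single appeal to \cite[Lemma 7.3]{ST1}.
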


We can also prove that generically, under the above assumptions, the integrable system has connected fibers if and only if no  tall non-degenerate singular point has a hyperbolic block and connected $T$-stabilizer (see Proposition~\ref{prop:converse}; cf. \cite[Theorem 1.11]{ST1}). Finally, we construct a family of examples that shows that our main theorem meaningfully extends previous results (see Proposition \ref{prop:example}).

 We believe that, as the results of this paper suggest,
 ephemeral degenerate singular point are tractable, but at the same time including them greatly increases the set of examples  of integrable systems  we can consider.

\mute{02/13/24: Morse theory does not give an obstruction to the existence of an integrable system with connected fibers and only non-degenerate and ephemeral points. We believe that we can construct examples (starting from dimension four?). 
}

As in \cite{ST1}, the fundamental idea behind the proof of
Theorem \ref{thm:main}, as well as of Theorem \ref{thm:too_bad},  is
that each reduced space $\Phi^{-1}(\beta)/T$ containing more than one point 
can be given the structure of a smooth closed, connected oriented surface so that $\overline{g} \colon \Phi^{-1}(\beta)/T \to \R$ is a Morse function (see Proposition \ref{prop:Morse}); moreover,
\begin{enumerate}[label=(\roman*)]
\item \label{item:hyp} $T$-orbits of  non-degenerate singular points with a hyperbolic block and connected $T$-stabilizer correspond to critical points of index $1$; 
\item \label{item:pep} $T$-orbits of singular points  that have  purely elliptic type  and are ``critical points of $g$ modulo $\Phi$"\footnote{
In particular, all singular points  over the interior of $\Phi(M)$ are critical point of $g$ modulo $\Phi$; see Definition~\ref{critmodphi} and \cite[Remark 3.5]{ST1}.} correspond to critical points of index $0$ or $2$; and
\item \label{item:regular} the remaining $T$-orbits, including those of ephemeral singular points, correspond to regular points. (This justifies the name ``ephemeral".)
\end{enumerate}

Additionally, as we show in the proof of Theorem \ref{thm:too_bad}, all tall points that satisfy conditions \eqref{item:0.8a} and \eqref{item:0.8b} of Theorem \ref{thm:too_bad} satisfy condition \ref{item:pep} above. Once these facts are established, the proofs reduce to standard results in Morse theory. 

\subsection*{Structure of the paper} In Section \ref{section:complexity_one} we recall some facts from \cite{KT1, ST1}, including  the  key result, Proposition~\ref{connected}, which allows us to construct Morse functions on the reduced spaces under certain conditions. In Sections \ref{section:non-deg} and \ref{sec:ephemeral_points}, respectively, we prove that non-degenerate and ephemeral points satisfy the technical conditions of this key result, as well as establish properties \ref{item:hyp}, \ref{item:pep} and \ref{item:regular} above. In Section \ref{sec:proofs} we prove our main results. We continue our study of ephemeral points in Section \ref{section:jets}, where characterize them {\em intrinsically}. Finally, in Section \ref{sec:examples} we construct a family of examples of integrable systems that satisfy the hypotheses of Theorem \ref{thm:main} and have degenerate ephemeral points.

\subsection*{Acknowledgments} D. Sepe was partially supported by FAPERJ grant JCNE E-26/202.913/2019 and by a CAPES/Alexander von Humboldt Fellowship for Experienced Researchers 88881.512955/2020-01. This study was financed in part by the Coordenação de Aperfeiçoamento de Pessoal de Nível Superior -- Brazil (CAPES) -- Finance code 001.  S. Tolman was partial supported by NSF  DMS Award 2204359 and Simons Foundation Collaboration Grant 637995.

\section{Morse theory on reduced spaces}\label{section:complexity_one}

In this section we first recall some facts about tall (complexity one) local models due to Karshon and Tolman in \cite{KT1}. 
We then review the method developed in \cite{ST1},  which allows us to construct Morse functions on the reduced spaces  under certain conditions (see Proposition \ref{connected}). This eventually allows us to use Morse theory on surfaces to prove our main results.

The first result from \cite{KT1} we recall is \cite[Lemma 2.3]{ST1}. 

\mute{This is used in Definition \ref{defn:defining_poly}, in Lemma~\ref{veryexceptional}, Lemma~\ref{lemma:vanishing-poly}, in the fluff of Section \ref{sec:examples} and in Proposition \ref{prop:example}.}
\begin{Lemma}\label{lemma:defn_poly}
 Let $Y = T \times_H \fh^{\circ} \times \C^{h+1}$ be a local model associated to an injective homomorphism $\rho \colon H \hookrightarrow (S^1)^{h+1}$. Then there exists $\xi = (\xi_0,\ldots, \xi_h) \in \Z^{h+1}$ such that the homomorphism $(S^1)^{h+1} \to S^1$
given by $\lambda \mapsto
\prod\limits_{j=0}^h \lambda_j^{\xi_j}$ induces a short exact sequence
\begin{equation}
  \label{eq:3}
  1 \to H \stackrel{\rho}{\to} (S^1)^{h+1} \to S^1 \to 1;
\end{equation}
moreover, the vector $\xi$ is unique up to sign. Finally,  $\xi_i\xi_j \geq 0$ for all $i,j$ if and only if the local model $Y$ is tall.
\end{Lemma}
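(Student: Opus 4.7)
The plan is to identify the quotient $(S^1)^{h+1}/\rho(H)$ with $S^1$; once this is done, the map $\lambda \mapsto \prod_j \lambda_j^{\xi_j}$ will appear as the corresponding character of $(S^1)^{h+1}$, and the tallness claim will follow from an elementary inspection of the moment map $\Phi_H$.

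First I would verify that $(S^1)^{h+1}/\rho(H) \simeq S^1$. Since $\rho$ is injective with $\dim H = h$, the image $\rho(H)$ is a closed subgroup of $(S^1)^{h+1}$ of dimension $h$, so the quotient is a compact abelian Lie group of dimension one; what remains is to check that it is connected. To see this, I would quotient first by the identity component $\rho(H_0) \subseteq \rho(H)$, which is a subtorus of dimension $h$, giving $(S^1)^{h+1}/\rho(H_0) \simeq S^1$. The injectivity of $\rho$ implies that $\pi_0(H) = H/H_0$ embeds into this $S^1$ as a finite subgroup, and any finite subgroup of $S^1$ is cyclic; the further quotient of $S^1$ by a finite cyclic subgroup is again $S^1$ (via an appropriate power map). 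Thus $(S^1)^{h+1}/\rho(H) \simeq S^1$. This handling of the potential disconnectedness of $H$ is the subtlety I expect to be the main obstacle.

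The resulting projection $\phi \colon (S^1)^{h+1} \to S^1$ is then a character of $(S^1)^{h+1}$; such characters are in bijection with $\Z^{h+1}$ via $\xi \mapsto (\lambda \mapsto \prod_j \lambda_j^{\xi_j})$, producing the desired $\xi$. Since the identification of the quotient with $S^1$ is itself unique up to $\Aut(S^1) = \{\pm \id\}$, which replaces $\xi$ by $-\xi$, we obtain uniqueness of $\xi$ up to sign.

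For the tallness characterization, I would dualize the Lie algebra sequence of the short exact sequence constructed above, obtaining $0 \to \R \xrightarrow{\xi} \R^{h+1} \to \fh^* \to 0$, where the middle map sends the $i$th standard basis vector to the isotropy weight $\eta_i \in \fh^*$. By definition, the local model $Y$ is tall precisely when $[1,0,0]$ is not isolated in $\Phi_Y^{-1}(0)/T \simeq \Phi_H^{-1}(0)/H$, i.e., when $\sum_i \eta_i |z_i|^2 = 0$ admits a solution with $z \neq 0$. Setting $r_i := |z_i|^2 \geq 0$, this amounts to the kernel of $(r_i) \mapsto \sum_i r_i \eta_i$ containing a nonzero vector with nonnegative entries; by the exact sequence above, this kernel is exactly $\R \cdot \xi$, so such a vector exists iff all $\xi_j$ share a common sign, i.e., iff $\xi_i \xi_j \geq 0$ for all $i, j$.
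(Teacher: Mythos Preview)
Your argument is correct. Note, however, that the paper does not actually prove this lemma: it is explicitly recalled from \cite{KT1} via \cite[Lemma 2.1]{ST1} and stated without proof, so there is no in-paper argument to compare against. Your approach---identifying $(S^1)^{h+1}/\rho(H)$ with $S^1$ (handling the possibly disconnected $H$ by first quotienting by the identity component), reading off the character $\xi$, and then analyzing tallness via the dual Lie algebra sequence and the homogeneity of $\Phi_H$---is exactly the standard one and matches the spirit of the original references. One minor point worth making explicit in your tallness step: the reason a nonzero $z \in \Phi_H^{-1}(0)$ forces $[0]$ not to be isolated is the degree-$2$ homogeneity of $\Phi_H$, which gives a path $t \mapsto [tz]$ in $\Phi_H^{-1}(0)/H$ connecting $[0]$ to $[z]$.
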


\mute{We don't explicitly refer the definition anywhere but clearly we need it.}
Lemma \ref{lemma:defn_poly} allows us to introduce the ``defining monomial" of a tall local model, following \cite[Definition 5.12]{KT1} and \cite[Definition 2.6]{ST1}.

\begin{Definition}\label{defn:defining_poly}
 Fix a tall local model $Y$. By Lemma~\ref{lemma:defn_poly}, there exists a unique $\xi = (\xi_0,\ldots, \xi_h) \in \Z_{\geq 0} ^{h+1}$ inducing the short exact sequence \eqref{eq:3}. The map $P \colon \C^{h+1} \to \C$ given by $P(z) = \prod_{j=0}^hz_j^{\xi_j}$ is called the {\bf defining monomial} of $Y$. By a slight abuse of notation, we  use the same name for the map $P\colon Y \to \C$ given by $[t,\alpha,z]\mapsto
  \prod_{j=0}^hz_j^{\xi_j}$.
\end{Definition}

 The defining monomial of a tall local model $Y$ allows to describe the symplectic quotient $\YmodT = \Phi^{-1}_Y(0)/T$. Following \cite{KT1}, given a complexity one $T$-space $(M,\omega,\Phi)$, we endow the reduced space $\MmodT$  with the subquotient topology and define the  sheaf of {\bf smooth functions} on $\MmodT$ as follows:  Given an open subset $U \subseteq \MmodT$, let $C^\infty(U)$ be the set of functions $\overline{g} \colon U \to \R$ whose pullback to $\Phi^{-1}(0)$ is the restriction of a
smooth $T$-invariant function $g \colon M \to \R$.  Moreover, a {\bf diffeomorphism} between topological spaces endowed with sheaves of smooth functions is a {\bf homeomorphism} that induces an isomorphism between the sheaves of smooth functions. The next result from \cite{KT1} we recall is a simplification of \cite[Lemma 2.10]{ST1}.

\mute{This is used in Proposition \ref{ephemeral} and Proposition \ref{prop:example}.}
\begin{Lemma}\label{trivial}
  Let $Y = T \times_H \fh^\circ \times \C^{h+1}$ be a tall local model
  with moment map $\Phi_Y \colon Y \to \ft^*$. The defining monomial $P \colon Y \to \C$ induces a homeomorphism $ \overline{P} \colon \YmodT \to \C$ that restricts to a diffeomorphism on the
  complement of $\left\{[1,0,0]\right\}$.
\end{Lemma}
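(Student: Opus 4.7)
I plan to argue in three stages.

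\emph{Stage 1 (reduction to $\C^{h+1}/\!/H$).} Using the splitting $\ft^*\simeq \fh^\circ \oplus \fh^*$ and the formula $\Phi_Y([t,\alpha,z])=\alpha+\Phi_H(z)$, I would first observe that $\Phi_Y^{-1}(0) = T\times_H(\{0\}\times \Phi_H^{-1}(0))$, so $[t,0,z]\mapsto[z]$ gives a natural homeomorphism $\YmodT \cong \Phi_H^{-1}(0)/H$. By Lemma \ref{lemma:defn_poly}, $H$ is the kernel of the character $\chi\colon (S^1)^{h+1}\to S^1$, $\lambda\mapsto\prod_j\lambda_j^{\xi_j}$, so $P$ is $H$-invariant and descends to a continuous map $\overline{P}\colon \Phi_H^{-1}(0)/H\to\C$. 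It thus suffices to show that $\overline{P}$ is a homeomorphism, and a diffeomorphism off the origin.

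\emph{Stage 2 (homeomorphism).} Since $Y$ is tall, Lemma \ref{lemma:defn_poly} lets me arrange $\xi_j\geq 0$ for all $j$. Dualizing \eqref{eq:3}, the condition $\Phi_H(z)=0$ becomes $(|z_0|^2,\dots,|z_h|^2) \in \R_{\geq 0}\cdot\xi$, i.e.\ $|z_j|^2 = 2s\xi_j$ for a unique $s\geq 0$. A direct computation then gives $|P(z)|^2 = (2s)^N \prod_j \xi_j^{\xi_j}$, with the convention $0^0 := 1$, so $|P(z)|$ determines $s$ and hence every $|z_j|$. Any two such $z,z'$ with the same moduli differ by an element $\lambda\in (S^1)^{h+1}$, and $P(\lambda z)=\chi(\lambda)P(z)$ forces $\lambda\in\ker\chi=H$ as soon as $P(z)=P(z')$; this gives injectivity of $\overline{P}$. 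Surjectivity is immediate from the same parametrization together with the surjectivity of $\chi$. The estimate $|P(z)|\to\infty$ as $s\to\infty$ shows $\overline{P}$ is proper, so the continuous bijection is a homeomorphism.

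\emph{Stage 3 (diffeomorphism away from $[1,0,0]$).} On $\Phi_H^{-1}(0)\setminus\{0\}$ one has $s>0$, so $z_j\neq 0$ for every $j$ with $\xi_j>0$; computing $dP$ along the radial curve $u\mapsto uz$ and along the $(S^1)^{h+1}$-orbit shows $P$ is a submersion on $\Phi_H^{-1}(0)\setminus\{0\}$ whose fibers over $\C\setminus\{0\}$ are precisely single $H$-orbits. Hence $P$ realizes the punctured level set as a principal $H$-bundle over $\C\setminus\{0\}$, after quotienting by the ineffective factors $\prod_{\xi_j=0}S^1 \subseteq H$ if any are present. Consequently, $H$-invariant smooth functions on $\Phi_H^{-1}(0)\setminus\{0\}$ are exactly pullbacks of smooth functions on $\C\setminus\{0\}$, and conversely every such $F$ yields a smooth $T$-invariant $F\circ P$ on $Y\setminus T\cdot[1,0,0]$. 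Together these identify the two sheaves of smooth functions under $\overline{P}^*$.

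The technical heart of the proof is Stage 3: I must verify that the sheaf of smooth functions on $\YmodT$ off $[1,0,0]$, which is defined via smooth $T$-invariant extensions to all of $Y$, really corresponds under pullback to smooth functions on $\C\setminus\{0\}$. I would handle this by extending any $H$-invariant smooth function on $\Phi_H^{-1}(0)\setminus\{0\}$ to $\fh^\circ\times\C^{h+1}$ using an equivariant tubular retraction, which is available precisely because the effective quotient of $H$ acts freely off the origin.
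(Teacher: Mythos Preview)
The paper does not prove this lemma; it is quoted from \cite[Lemma 2.7]{ST1}, which in turn records a result from \cite{KT1}. So there is no in-paper proof to compare against, and your proposal supplies what the paper omits.

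Your argument is sound. Stage~1 is the standard identification $\YmodT\simeq \Phi_H^{-1}(0)/H$; note that this is an isomorphism of ringed spaces because $T$-invariant smooth functions on $Y$ correspond to $H$-invariant smooth functions on $\fh^\circ\times\C^{h+1}$ via $r^*$. Stage~2 is correct: the key observation that $(|z_0|^2,\dots,|z_h|^2)$ lies on the ray $\R_{\ge 0}\cdot\xi$ follows from the dual of the exact sequence~\eqref{eq:3}, and your injectivity/surjectivity/properness steps go through (for injectivity, note that when $z\neq 0$ you have $P(z)\neq 0$, so $\chi(\lambda)=1$ is forced; the coordinates with $\xi_j=0$ do not contribute to $\chi(\lambda)$, so the indeterminacy in $\lambda$ there is harmless). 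In Stage~3 your submersion computation is exactly right, and the observation that the stabilizer on $\Phi_H^{-1}(0)\setminus\{0\}$ is the torus $\prod_{\xi_j=0}S^1\subseteq\rho(H)$ is the correct way to handle the possibly non-free $H$-action. For the direction ``smooth on $\C\setminus\{0\}$ implies smooth on the reduced space,'' you do not even need a tubular retraction: $F\circ P$ is already a smooth $T$-invariant function on the open set $P^{-1}(V)\subseteq Y$, and a $T$-invariant cutoff extends it to all of $Y$. Your extension argument is only needed for the converse, and averaging a non-equivariant extension over $H$ (which preserves the restriction since $\Phi_H^{-1}(0)$ is $H$-invariant) does the job.
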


To state our main result, Proposition~\ref{connected}, we switch our attention to integrable
systems extending complexity one $T$-spaces, and recall a few notions and results from \cite[Section 3]{ST1}. Let $(M,\omega, \ft \times \R, f:=(\Phi,g))$ be an integrable
    system such that $\left(M,\omega,\Phi\right)$ is a complexity
  one $T$-space. As in the Introduction, given $p \in M$, the function $g$ induces a function
  $\overline{g} \colon \Phi^{-1}(\Phi(p))/T \to \R.$ We start by recalling \cite[Definition 3.1]{ST1}.

\mute{We don't explicitly  refer to this definition anywhere but clearly we need it.}
\begin{Definition}\label{critmodphi} 
  A point $p \in M$ is a {\bf critical point of $\boldsymbol{g}$ modulo $\boldsymbol{\Phi}$} if
  $D_p g$ vanishes on $\ker D_p\Phi$; otherwise, $p$ is a {\bf regular point of $\boldsymbol{g}$ modulo $\boldsymbol{\Phi}$.}
\end{Definition}

 Near each regular point of $g$ modulo $\Phi$ we can endow the reduced space with a structure of smooth manifold so that $\overline{g}$ is smooth with no critical points.

\mute{The following result is used in Remark 1.9, Corollary \ref{cor:tricho}, the proof of Theorem \ref{thm:too_bad}, and Proposition \ref{prop:example}, as well as in the intro. We only need the "moreover", but we decided to keep the whole statement.}
\begin{Lemma}[\cite{ST1}, Lemma 3.4]\label{greg2}
Let $(M,\omega, \ft \times \R, f:=(\Phi,g))$ be an integrable
    system such that $\left(M,\omega,\Phi\right)$ is a complexity
  one $T$-space. Let $p \in M$ be a regular point of $g$ modulo $\Phi$. Then there exists an open neighborhood $U_p$ of $[p]$ in the reduced space
$\Phi^{-1}(\Phi(p))/T$ and a map $\Psi_p \colon U_p \to \R^2$ satisfying:
\begin{enumerate}[label=(\alph*),ref=(\alph*)]
    \item \label{item:-4} $\Psi_p \colon U_p \to \Psi(U_p)$ is a diffeomorphism onto an open set; and
    \item\label{item:-2} $\overline{g} \circ \Psi_p^{-1} \colon \Psi_p(U_p) \to \R$ is a smooth function with no critical points.
\end{enumerate}
Moreover, $p$ is tall, has purely elliptic type, and  the defining monomial of the local model for $p$ has degree $1$. \end{Lemma}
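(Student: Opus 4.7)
The plan is to pass to the local model at $p$ and then exploit $T$-invariance of $g$ together with regularity modulo $\Phi$ to extract a very restrictive structure on the isotropy representation at $p$. This will simultaneously yield the smooth chart $\Psi_p$ and the three ``moreover'' claims.

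By the Marle--Guillemin--Sternberg local normal form I would work inside the local model $Y = T \times_H \fh^\circ \times \C^{h+1}$ associated to the slice representation $\rho \colon H \hookrightarrow (S^1)^{h+1}$ with isotropy weights $\eta_0,\dots,\eta_h \in \fh^*$, identifying $p$ with $[1,0,0]$ and $g$ with a $T$-invariant smooth function on $Y$. Using the decomposition $T_p Y = \ft/\fh \oplus \fh^\circ \oplus \C^{h+1}$, a direct computation of $D_p \Phi_Y$ (noting $D_0 \Phi_H = 0$) shows that $\ker D_p \Phi_Y = \ft/\fh \oplus 0 \oplus \C^{h+1}$, while $T$-invariance forces $D_p g|_{\ft/\fh} = 0$. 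Hence $p$ is regular for $g$ modulo $\Phi$ if and only if $D_0(R^*g) \colon \C^{h+1} \to \R$ is non-zero.

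Since $R^*g$ is $H$-invariant, its differential at $0$ is an $H$-invariant real-linear functional on $\C^{h+1}$, which can be non-zero only if the fixed subspace $(\C^{h+1})^H = \bigoplus_{j : \eta_j = 0} \C_j$ is non-trivial, i.e., some $\eta_j = 0$. Because $\rho$ is injective and $\dim H = h$, a dimension count shows that \emph{exactly} one such weight vanishes; after relabeling, $\eta_0 = 0$ and $\eta_1,\dots,\eta_h$ are non-zero. Then $\rho(H) \subseteq \{1\} \times (S^1)^h$ is a closed $h$-dimensional subgroup of a connected $h$-torus, so $\rho(H) = \{1\} \times (S^1)^h$ and $H$ is connected. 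The unique $\xi$ of Lemma \ref{lemma:defn_poly} is then $(1,0,\dots,0)$, so all entries are non-negative: the local model is tall (hence $p$ is tall) and the defining polynomial has degree $N = 1$.

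With this structure the chart $\Psi_p$ is immediate. Indeed $\Phi_H^{-1}(0) = \{z : z_j = 0 \text{ for all } j \geq 1\} = \C_0$, on which $H$ acts trivially, so Lemma \ref{trivial} identifies an open neighborhood $U_p$ of $[p]$ in $\YmodT$ diffeomorphically with an open subset of $\C_0 \simeq \R^2$ via the now linear defining polynomial; take $\Psi_p$ to be this identification. Under it $\overline g$ corresponds to $R^*g|_{\C_0}$, which is smooth with non-zero derivative at $0$ by the previous paragraph, so after shrinking $U_p$ we get \ref{item:-4} and \ref{item:-2}. Finally, since $X_g(p) \neq 0$ the $V$-stabilizer of $p$ has Lie algebra $W_0 = \fh$; a short computation shows $T_p \rorbit = \ft/\fh \oplus \R X_g(p)$ and the symplectic slice $(T_p \rorbit)^\omega / T_p \rorbit$ reduces to $\bigoplus_{j=1}^h \C_j$, on which $H = (S^1)^h$ acts with weights $\eta_1,\dots,\eta_h$. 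These form a basis of $\fh^*$, and identifying $W_0 \simeq \R^h$ dually exhibits the linearized moment map as a product of $h$ elliptic blocks, so $p$ has purely elliptic type.

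The technical crux is the dimension-count step that forces exactly one weight $\eta_j$ to vanish, from which tallness, degree one, the smooth chart, and purely elliptic type all cascade. The remaining work is bookkeeping in the local model, and I expect the identification of the $V$-symplectic slice and the verification that the blocks are elliptic (rather than hyperbolic or focus-focus) to require the most care, since it hinges on the positivity of the quadratic form $\tfrac{1}{2}|z_j|^2$ coming from $\eta_j \neq 0$ combined with $\rho(H)$ being the full torus $\{1\} \times (S^1)^h$.
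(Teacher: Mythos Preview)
The paper does not prove this lemma here; it is quoted verbatim from \cite[Lemma~3.2]{ST1}. Your argument is the natural one and is correct in outline, so there is nothing substantive to compare against.

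One small logical imprecision: you write $(\C^{h+1})^H = \bigoplus_{j:\eta_j=0}\C_j$, but this equality presupposes that $H$ is connected, which you only establish afterwards. What you actually need (and what follows immediately from the fact that $H$ acts through $(S^1)^{h+1}$) is that $(\C^{h+1})^H$ is the coordinate subspace $\bigoplus_{j:\rho_j\equiv 1}\C_j$. Non-triviality then gives some $j$ with $\rho_j\equiv 1$ on \emph{all} of $H$ (not merely $\eta_j=0$ on $\fh$), and it is this stronger statement that forces $\rho(H)\subseteq\{1\}\times(S^1)^h$ and hence $H=H_0$. With this reordering the logic is clean, and your dimension count then shows $\eta_0=0$ is the unique vanishing weight.

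Also, Lemma~\ref{trivial} as stated only guarantees a diffeomorphism away from $[p]$; you should observe directly that for $N=1$ the identification $\YmodT\simeq\Phi_H^{-1}(0)/H=\C_0$ is already a diffeomorphism in the sheaf sense, since any smooth function of $z_0$ extends $T$-invariantly to $Y$ via $[t,\alpha,z]\mapsto h(z_0)$, and conversely $\overline g(z_0)=R^*g(z_0,0,\dots,0)$ is visibly smooth. Your identification of the $V$-symplectic slice with $\bigoplus_{j\ge 1}\C_j$ and the elliptic-block verification are correct: since $\eta_1,\dots,\eta_h$ form a basis of $\fh^*$, the dual identification $W_0\simeq\R^h$ turns the linearized moment map into $\big(\tfrac12|z_1|^2,\dots,\tfrac12|z_h|^2\big)$.
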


 By Lemma \ref{greg2}, in order to construct Morse functions on the reduced spaces, we need to impose some conditions near the critical points of $g$ modulo $\Phi$. More precisely, the following key result holds.

\mute{The following result is used in Proposition \ref{prop:Morse} and referred to in the fluff.}
\begin{Proposition}[\cite{ST1}, Proposition 3.6] \label{connected}
Let $(M,\omega, \ft \times \R, f:=(\Phi,g))$ be an integrable system such that $\left(M,\omega,\Phi\right)$ is a complexity
  one $T$-space.  Fix $\beta \in \ft^*$.
Assume that for each critical point $p \in \Phi^{-1}(\beta)$ of $g$ modulo $\Phi$  
 there exists an open neighborhood $U_p$ of $[p]$ in  the reduced space $\Phi^{-1}(\beta)/T$  
and a map $\Psi_p \colon U_p \to \R^2$ satisfying:
\begin{enumerate}
\item $\Psi_p \colon U_p \to \Psi_p(U_p)$ is a homeomorphism onto an open set;
\item  $\Psi_p$ restricts to a diffeomorphism on $U_p \smallsetminus \{[p]\}$; and
\item $ \overline{g}\circ \Psi_p^{-1} \colon \Psi_p(U_p) \to \R$ is a Morse function.
\end{enumerate}
Then  $\Phi^{-1}(\beta)/T$ can be given the structure of a smooth oriented surface such that $\overline g \colon \Phi^{-1}(\beta)/T \to \R$ is a Morse function.
Moreover, $[p]$ is a critical point of $\overline{g}$ of index $\mu$ exactly if both $p$ is a critical point of $g$ modulo $\Phi$ and $\Psi_p([p])$ is a critical point of $\overline g \circ \Psi_p^{-1}$ of index $\mu$.
\end{Proposition}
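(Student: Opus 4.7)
The plan is to build a smooth atlas on the reduced space $R := \Phi^{-1}(\beta)/T$ by combining, at each $[q]$ with $q$ a regular point of $g$ modulo $\Phi$, the chart supplied by Lemma \ref{greg2}, and, at each $[p]$ with $p$ a critical point of $g$ modulo $\Phi$ in the fiber, the chart $\Psi_p$ furnished by the hypothesis. Once the atlas is in place, the claims about $\overline g$ follow from the local descriptions of the charts.

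First I would shrink each $U_p$ so that: (i) the $U_p$ are pairwise disjoint as $[p]$ ranges over distinct critical points of $g$ modulo $\Phi$ in $\Phi^{-1}(\beta)$, and (ii) each $U_p$ contains $[p]$ as its only critical point of $g$ modulo $\Phi$. Condition (ii) is possible because, by hypothesis (3), $\overline g \circ \Psi_p^{-1}$ is Morse, hence has isolated critical points in $\Psi_p(U_p)$, and the image under $\Psi_p$ of any critical point of $g$ modulo $\Phi$ inside $U_p$ must be such a critical point (this will be part of the ``moreover'' verification below, and in any case we may simply excise all but $[p]$). Together with Lemma \ref{greg2} charts at regular points of $g$ modulo $\Phi$, we obtain an open cover of $R$ by maps into $\R^2$.

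Next I would check compatibility. After the shrinking, $[p]$ lies in only the single chart $(U_p, \Psi_p)$; every overlap involving $U_p$ and a different chart takes place inside $U_p \smallsetminus \{[p]\}$. On that set, condition (2) makes $\Psi_p$ a diffeomorphism with respect to the sheaf of smooth functions on $R$, and any Lemma \ref{greg2} chart is also such a diffeomorphism by part \ref{item:-4} of that lemma. Hence on any overlap of two charts in our atlas, the transition is a smooth diffeomorphism of open subsets of $\R^2$ in the usual sense, and $R$ acquires the structure of a smooth $2$-manifold. To orient $R$, note that on the complement of the (isolated) critical points of $g$ modulo $\Phi$ the reduced space already carries the canonical symplectic area form coming from the complexity-one structure, which supplies an orientation; by composing each $\Psi_p$ with a reflection of $\R^2$ if necessary, the orientation given by the charts agrees with this one on $U_p \smallsetminus \{[p]\}$ and hence globally.

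Finally, I would verify that $\overline g$ is Morse. On any Lemma \ref{greg2} chart, part \ref{item:-2} of that lemma says $\overline g$ has no critical points, while on each $\Psi_p$ chart, hypothesis (3) says $\overline g \circ \Psi_p^{-1}$ is Morse; taking these together gives the ``moreover'' statement directly. The main obstacle is precisely the compatibility step: since $\Psi_p$ is only required to be a homeomorphism at $[p]$, a priori the smooth structure at $[p]$ could clash with nearby charts. The shrinking in the first step is what defuses this issue, by ensuring $[p]$ is contained in no competing chart, so the smooth structure at $[p]$ is unambiguously dictated by $\Psi_p$ alone; the remainder of the argument is then bookkeeping using Lemma \ref{greg2}.
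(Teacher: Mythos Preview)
The paper does not actually prove this proposition: it is quoted verbatim from \cite{ST1} as Proposition~3.4 there, and no argument is given here. So there is no proof in the present paper to compare against.

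That said, your outline is the natural one and is almost certainly what \cite{ST1} does: use Lemma~\ref{greg2} to chart $\Phi^{-1}(\beta)/T$ near $T$-orbits of regular points of $g$ modulo $\Phi$, use the hypothesized $\Psi_p$ near the remaining orbits, shrink so that each $U_p$ meets no other chart at $[p]$ itself, and observe that every overlap lives in the locus where all charts are diffeomorphisms for the quotient sheaf, so transitions are smooth. The Morse and index claims then follow chart by chart exactly as you say. One small point worth tightening is the orientation step: rather than invoking a ``canonical symplectic area form'' on the reduced space (which is not globally a manifold before you build the atlas), it is cleaner to note that after your construction the surface minus the finitely many points $\{[p]\}$ carries the smooth structure coming from the quotient sheaf, where the reduced form is defined and nowhere zero; this orients the punctured surface, and since each puncture has a coordinate neighborhood $\Psi_p(U_p)\subset\R^2$, the orientation extends over it (flipping $\Psi_p$ if necessary, as you indicate).
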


\section{Non-degenerate singular points}\label{section:non-deg}

In this section, we study  non-degenerate singular points in integrable systems that extend complexity one $T$-actions. Our main goal 
is to recall results from \cite{ST1} that show how non-degenerate singular
points fit within the framework of Morse theory on reduced spaces
of Proposition \ref{connected} (see Propositions \ref{gcrit} and \ref{nondegenerate}).
Additionally, we determine which non-degenerate singular points are ephemeral in the sense of Definition \ref{eph} (see Corollary \ref{cor:tricho}).

The first claim that we recall 
 characterizes short non-degenerate singular points.

\mute{The following is used in the proof of Theorem~\ref{thm:main}, 
Lemma~\ref{veryexceptional}, and Corollary~\ref{cor:tricho}.}

\begin{Lemma}[\cite{ST1}, Lemma 6.2]\label{short_elliptic}
Let $(M,\omega, \ft \times \R, f:=(\Phi,g))$ be an integrable system such that $\left(M,\omega,\Phi\right)$ is a complexity one $T$-space. If $p \in M$ is short and non-degenerate, then $p$ has purely elliptic type.
\end{Lemma}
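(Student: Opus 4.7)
\emph{Proof plan.} The plan is to work in the local model $Y = T \times_H \fh^\circ \times \C^{h+1}$ for $p$ regarded as a point of the complexity one $T$-space, identifying $p$ with $[1,0,0]$ and writing $\eta_0,\ldots,\eta_h \in \fh^*$ for its isotropy weights. Shortness of $p$ amounts to $\Phi_H^{-1}(0) = \{0\}$ near $0$, and since $\Phi_H(z) = \tfrac12 \sum \eta_i |z_i|^2$ this is equivalent, by a standard linear alternative, to the existence of a $\xi_0 \in \fh$ with $\langle \eta_i, \xi_0\rangle > 0$ for every $i$; in particular no isotropy weight is zero.

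First I would check that the $V$-orbit $\rorbit$ through $p$ coincides locally with its $T$-orbit $\torbit$, so that the $V$-slice $(T_p\rorbit)^{\omega}/T_p\rorbit$ coincides with the symplectic slice representation $\C^{h+1}$ of $H$. Writing $T_p Y = \ft/\fh \oplus \fh^\circ \oplus \C^{h+1}$, the $T$-invariance of $g$ makes $X_g(p)$ an $H$-fixed vector, hence it lies in $\ft/\fh \oplus \fh^\circ \oplus (\C^{h+1})^H$; the nonvanishing of every $\eta_i$ then gives $(\C^{h+1})^H = 0$. The isotropy of $\rorbit$, which encodes the identity $\omega(X_\xi(p), X_g(p)) = \{\langle \Phi, \xi\rangle, g\}(p) = 0$ for all $\xi \in \ft$, combined with the non-degeneracy of the natural pairing between $\ft/\fh$ and $\fh^\circ$, forces the $\fh^\circ$-component of $X_g(p)$ to vanish. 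Hence $X_g(p) \in T_p\torbit$, so $\rorbit = \torbit$ near $p$ and the $V$-slice equals $\C^{h+1}$.

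The decisive step is then to pair $\xi_0$ with the block decomposition provided by non-degeneracy. Since $\fh \subseteq W_0$, the vector $v_0 := (\xi_0, 0) \in V$ lies in $W_0$, and its component of the homogeneous linearized $V$-moment map on $\C^{h+1}$ equals the Hessian of $\langle \Phi, \xi_0\rangle$ at $p$, namely the positive definite form $\tfrac12 \sum \langle \eta_i, \xi_0\rangle |z_i|^2$. By non-degeneracy, this form must appear as the $v_0$-component of a product of elliptic, hyperbolic and focus-focus blocks; but a hyperbolic or focus-focus block with nonzero $v_0$-coefficient would make the sum indefinite (since $\Im(z^2)$ and the moment map components of a focus-focus block take negative values), while such a block with zero $v_0$-coefficient would leave the sum independent of that block's coordinates and hence degenerate. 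Positive definiteness therefore forces every block to be elliptic, so $p$ has purely elliptic type. The principal obstacle is the identification $\rorbit = \torbit$ near $p$; without it the Hessian argument would take place on a proper sub-quotient of $\C^{h+1}$, and one would need extra care to conclude that positive definiteness is preserved and that non-elliptic blocks are still ruled out.
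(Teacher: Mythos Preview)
The paper does not prove this lemma; it is quoted verbatim from \cite[Lemma~6.2]{ST1}, so there is no in-paper argument to compare against. That said, your proof is correct and self-contained: the Gordan-type alternative characterizing shortness, the argument that $X_g(p)\in T_p\torbit$ via $H$-invariance and isotropy of the $V$-orbit, and the positive-definiteness obstruction to hyperbolic or focus-focus blocks are all sound. The only point worth tightening is the claim $(\C^{h+1})^H=0$: you should note explicitly that $\eta_i\neq 0$ for all $i$ kills the fixed points of the \emph{identity component} $H^0$, and hence of $H$ itself, since $H$ need not be connected.
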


The next two statements show that tall non-degenerate singular points  satisfy the hypotheses of Proposition \ref{connected}. The first deals with points that have purely elliptic type; the second deals with all other cases.

\mute{ The following is used in Proposition \ref{prop:Morse}.}
\begin{Proposition} [\cite{ST1}, Proposition 4.4] \label{gcrit}
Let $(M,\omega,\ft \times \R, f := (\Phi,g))$
be an integrable system
so that  $(M,\omega,\Phi)$ is a complexity one $T$-space.
Let $p \in \Phi\inv(0) \cap g^{-1}(0)$ be a tall 
critical point
of $g$ modulo $\Phi$ that has purely elliptic type. After possibly replacing $g$ by $-g$, there exist an open neighborhood $U$ of $[p]$ in $\MmodT$
and a map $\Psi \colon U \to \R^2$ taking $[p]$ to $(0,0)$ satisfying:
\begin{enumerate}
\item $\Psi \colon U \to \Psi(U)$ is a homeomorphism onto an open set;
\item $\Psi$ restricts to a diffeomorphism on $U \smallsetminus \{[p]\}$; and
\item $(\overline{g} \circ \Psi^{-1})(x,y)= x^2 + y^2$ for all $(x,y) \in \Psi(U)$.
\end{enumerate}
\end{Proposition}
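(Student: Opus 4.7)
The plan is to work in the local model for $p$ and compute $\overline{g}$ explicitly in the chart provided by the defining polynomial $\overline{P}$, then construct $\Psi$ by hand so that $\overline{g} \circ \Psi^{-1}(x,y) = x^2 + y^2$.

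First, by the Marle-Guillemin-Sternberg theorem I would identify a $T$-invariant neighborhood of $p$ with an open subset of its local model $Y = T \times_H \fh^\circ \times \C^{h+1}$, sending $p$ to $[1,0,0]$. Let $W \subseteq V = \ft \times \R$ denote the stabilizer of $p$. Because $p$ is a critical point of $g$ modulo $\Phi$, one has $\dim W = h+1$ and $W_0 = \fh \oplus \R v$ for some $v = (v^\ft, 1) \in \ft \times \R$ that may be chosen with $v^\ft \in \fh^\perp$. Since $p$ has purely elliptic type, after possibly applying a further linear $T$-equivariant symplectic automorphism of $\C^{h+1}$ within each $H$-isotypic block, I may assume that the MGS coordinates simultaneously diagonalize the $W$-action on the symplectic slice $\C^{h+1}$. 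In these coordinates the quadratic part of $R^*g(z) := g([1,0,z])$ equals $\sum_j v_j |z_j|^2/2$, where $(v_j) \in \R^{h+1}$ are the coordinates of $v$ in the identification $W_0 \simeq \R^{h+1}$.

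Next I would describe $\overline{g}$ in the chart $\overline{P} \colon \YmodT \to \C$ from Lemma~\ref{trivial}. On $\Phi_H^{-1}(0)$, the constraint $\sum_j \eta_j |z_j|^2 = 0$ in $\fh^*$ combined with $|z_j|^2 \geq 0$ forces $|z_j|^2 = \lambda \xi_j$ for a single real parameter $\lambda \geq 0$, where $\xi$ is as in Lemma~\ref{lemma:defn_poly}. Substituting,
\[
R^*g\big|_{\Phi_H^{-1}(0)} \;=\; \tfrac{c}{2}\lambda + O(\lambda^{3/2}), \qquad c := \sum_j v_j \xi_j,
\]
and $|P|^2 = D\lambda^N$ with $D := \prod_j \xi_j^{\xi_j} > 0$ and $N = \sum_j \xi_j$. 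A crucial point is that $c \neq 0$: the short exact sequence of Lemma~\ref{lemma:defn_poly} shows that $\{\lambda' \in W_0 : \sum_j \xi_j \lambda'_j = 0\}$ equals $\fh$, and $v$ was chosen outside $\fh$. Hence, in the $\overline{P}$-chart,
\[
\overline{g}(\bar z) \;=\; K|\bar z|^{2/N} + E(\bar z), \qquad K := \tfrac{c}{2}D^{-1/N} \neq 0,
\]
with $E$ smooth on $\C\ssminus\{0\}$ and $|E(\bar z)| = O(|\bar z|^{3/N})$, together with an analogous radial derivative estimate. After replacing $g$ by $-g$ if necessary, $K > 0$, so $\overline{g}$ is strictly positive on a punctured neighborhood of $[p]$ and $\partial_r \overline{g}(re^{i\theta}) > 0$ for all small $r > 0$, whence the level sets of $\overline{g}$ near $[p]$ are topological circles, each meeting every ray from $[p]$ exactly once.

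Finally, on a small neighborhood $U$ of $[p]$ I would define $\Psi \colon U \to \R^2$ by $\Psi([p]) = (0,0)$ and $\Psi(\bar z) = \sqrt{\overline{g}(\bar z)}\bigl(\cos \arg \bar z,\ \sin \arg \bar z\bigr)$ for $\bar z \neq 0$; then $\overline{g} \circ \Psi^{-1}(x,y) = x^2 + y^2$ is automatic, while the radial monotonicity together with the smoothness and non-vanishing radial derivative of $\overline{g}$ on the punctured neighborhood yield conditions (1) and (2). The main obstacle is precisely the control on $E$ and its radial derivative needed to ensure that the leading term $K|\bar z|^{2/N}$ dominates; I would handle this by carefully tracking how smooth $H$-invariant functions on $\C^{h+1}$ restrict to $\Phi_H^{-1}(0)$ and descend to smooth functions of $\overline{P}$ on the punctured reduced space, using that in polar-type coordinates $(\lambda, \arg P)$ the remainder is itself smooth of the stated order.
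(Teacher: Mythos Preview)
The paper does not prove this statement; it is quoted verbatim as \cite[Proposition~4.5]{ST1} and used as a black box.  So there is no proof in the paper to compare against, and I evaluate your proposal on its own merits.

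Your strategy is sound and captures the essential mechanism: in the local model, the leading behavior of $\overline g$ in the chart $\overline P$ is $K|\bar z|^{2/N}$ with $K\neq 0$, and the map $\Psi(\bar z)=\sqrt{\overline g(\bar z)}\,e^{i\arg\bar z}$ then does the job.  The argument that $c=\sum_j \xi_j v_j\neq 0$ via the short exact sequence of Lemma~\ref{lemma:defn_poly} is correct once the diagonalization is in place.

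Two points deserve more care.  First, the simultaneous diagonalization: the MGS chart is adapted to the (possibly disconnected) group $H$, whereas purely elliptic type diagonalizes the linearized $W_0$-action.  You need that both can be made diagonal in the \emph{same} symplectic coordinates on $\C^{h+1}$.  This is true---since $H$ and $W_0$ commute and each generates a compact abelian subgroup of $\mathrm{Sp}(\C^{h+1})$, decompose $\C^{h+1}$ into $W_0$-weight spaces and then diagonalize the $H$-action on each---but it is not automatic from ``within each $H$-isotypic block'' and should be argued.  Second, the derivative estimate $\partial_r E=o(r^{2/N-1})$ is exactly what you flag as ``the main obstacle,'' and you do not actually carry it out.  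A clean way is the one used in this paper for the ephemeral case (Lemma~\ref{lemma:invariant2} and the proof of Proposition~\ref{ephemeral}): write $\overline g(\bar z)=h(x,y,C(x^2+y^2)^{1/N})$ via Schwarz's theorem, and differentiate this expression directly; the purely elliptic hypothesis then translates into $\partial_\rho h(0)\neq 0$, which replaces your $c\neq 0$ and avoids the diagonalization altogether.
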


\mute{ The following is used in Proposition \ref{prop:Morse}.}

\begin{Proposition}[\cite{ST1}, Proposition 6.3]\label{nondegenerate}
Let $\left(M,\omega, \ft \times \R, f:=\left(\Phi,g\right)\right)$ be an integrable  system such that $\left(M,\omega,\Phi\right)$ is a complexity one $T$-space. Assume that  $p \in \Phi^{-1}(0) \cap g^{-1}(0)$ is non-degenerate and tall; let $N$ be the degree of  the defining monomial of the local model for $p$.
Then $p$  has at most one non-elliptic block; furthermore,
\begin{enumerate}
\item If $p$ has a hyperbolic block and connected $T$-stabilizer, then $N = 1$. Moreover, there exist an open neighborhood $U$ of $[p] \in \MmodT$ and a map $\Psi \colon U \to \R^2$ taking $[p]$ to $(0,0)$ satisfying:
\begin{enumerate}[label=(\alph*),ref=(\alph*)]
    \item \label{item:-500} $\Psi \colon U \to \Psi(U)$ is a diffeomorphism onto an open set; and
    \item \label{item:-600} $(\overline{g} \circ \Psi^{-1})(x,y) = x^2 - y^2$ for all $(x,y) \in \Psi(U)$.
   \end{enumerate}
    \item If  $p$ either has a focus-focus block, or  has a hyperbolic block and disconnected $T$-stabilizer, then $N = 2$. Moreover,  there exist an open neighborhood $U$ of $[p] \in \MmodT$ 
  and a map $\Psi \colon U \to \R^2$ taking $[p]$ to $(0,0)$ satisfying:
  \begin{enumerate}
  \item \label{item:800} $\Psi \colon U \to \Psi(U)$ is a homeomorphism onto an open set;
  \item \label{item:1200} $\Psi$ restricts to a diffeomorphism on $U \smallsetminus \{[p]\}$; and
      \item \label{item:1300} $(\overline{g} \circ \Psi^{-1})(x,y)= y$ for all $(x,y) \in \Psi(U)$.
  \end{enumerate}
  \end{enumerate}
\end{Proposition}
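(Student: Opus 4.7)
My plan is to combine the Marle--Guillemin--Sternberg normal form for the $T$-action (which produces the local model $Y$) with the Eliasson--Miranda--Zung linearization for non-degenerate points in integrable systems, and to check that the two normal forms are compatible near $p$ because $\Phi$ is a component of $f$. The linearized $T$-action on the symplectic slice $(T_p\rorbit)^\omega/T_p\rorbit$ must preserve each non-degenerate block, since every block Hamiltonian Poisson-commutes with every component of $\Phi$ and $T$ is generated by the components of $\Phi$. Checking directly which circle representations preserve each block type forces a hyperbolic block to be $T$-fixed, and forces a focus-focus block to carry opposite weights $(k,-k)$. Combined with the fact that $f$ has only one extra component $g$ beyond $\Phi$, a dimension count then shows that $p$ has at most one non-elliptic block.

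Next I compute $N$ case-by-case by reading off the isotropy weights $\eta_0,\dots,\eta_h$ of $H$ on the slice $\C^{h+1}$ and applying Lemma~\ref{lemma:defn_poly}. In the hyperbolic case with connected $T$-stabilizer, the hyperbolic coordinate carries weight $0$ while the remaining directions are elliptic; the short exact sequence of Lemma~\ref{lemma:defn_poly} then forces a minimal $\xi$ with a single nonzero entry equal to $1$, so $N=1$. In the hyperbolic case with disconnected $T$-stabilizer, the extra finite component of $H$ contains a $\Z_2$ acting by $z\mapsto -z$ on the hyperbolic coordinate, and the constraint $\rho(h)^\xi = 1$ on $H$ forces the corresponding entry of $\xi$ to equal $2$, giving $N=2$. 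In the focus-focus case, injectivity of $\rho$ on the focus-focus pair forces weights $(1,-1)$, which yields $\xi=(1,1)$ on those two coordinates and again $N=2$.

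To construct $\Psi$, I use the defining polynomial to parametrize $\YmodT$. In case~(1), since $N=1$, Lemma~\ref{trivial} upgrades $\overline{P}\colon \YmodT\to \C$ to a genuine diffeomorphism on a full neighborhood of $[p]$; and the restriction of $g$ to the slice is the hyperbolic Hamiltonian $\Im(z^2) = 2\,\Re(z)\,\Im(z)$. After the linear change of variables $(x,y) = \bigl((X+Y)/\sqrt{2},(X-Y)/\sqrt{2}\bigr)$ on $\C \simeq \R^2$, $\overline{g}$ becomes $x^2-y^2$, producing the desired $\Psi$. In case~(2), the restriction of $g$ to the slice is exactly $\Im(P(z))$ --- namely $\Im(z^2)$ in the hyperbolic-with-$\Z_2$ subcase, and $\Im(z_1z_2)$ in the focus-focus subcase --- after subtracting from $g$ an appropriate $T$-invariant function that extends a component of $\Phi$. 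Hence $\overline{g} = \Im \overline{P}$, and taking $\Psi=\overline{P}$ yields $\overline{g}\circ\Psi^{-1}(x,y)=y$, with the diffeomorphism property off $[p]$ coming from Lemma~\ref{trivial}.

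The main obstacle is Step~1: correctly bounding the number of non-elliptic blocks and distinguishing connected from disconnected $T$-stabilizer in the hyperbolic subcase. The focus-focus case is the most delicate, because neither of the two focus-focus Hamiltonians is individually $T$-invariant; only the pair is preserved jointly under the $(k,-k)$ weight action, and one must check carefully that exactly one of them becomes a component of the $T$-moment map $\Phi$ (namely $\tfrac12(|z_1|^2-|z_2|^2)$, up to scale) while the other, $\Im(z_1z_2)$, is the only piece of the focus-focus data that $g$ can contribute.
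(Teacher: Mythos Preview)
The paper does not prove this proposition here; it is quoted from \cite{ST1}. For the second sentence of part~(2) the present paper explicitly declines to quote it and instead reproves it by showing that such points are ephemeral (Corollary~\ref{cor:tricho}) and then invoking Proposition~\ref{ephemeral}, whose proof proceeds via Schwarz's theorem and a direct analysis of $\overline g$ on $\YmodT$ --- not via an Eliasson-type symplectic normal form.

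Your sketch has the right architecture for the first half (the block/weight analysis and the computation of $N$), but there is a genuine gap in your construction of $\Psi$. You assert that, after subtracting a function in $\mathcal I_\Phi$, the restriction of $g$ to the slice is \emph{exactly} $\Im P(z)$ (respectively $\Im(z^2)$ in case~(1)). No normal-form theorem gives you this. Eliasson--Miranda--Zung produces a $T$-equivariant symplectomorphism $\psi$ and a diffeomorphism $A$ of the target with $f\circ\psi = A\circ q$; hence $g\circ\psi = A_{\R}(q_1,\dots,q_n)$ for a generically nontrivial $A_\R$, not $g = \Im P + h$ with $h\in\mathcal I_\Phi$. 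Concretely, in the focus-focus local model take $g=\Im(z_1z_2)+(\Im(z_1z_2))^2$: the origin is non-degenerate with a focus-focus block, yet on $\YmodT\cong\C$ one has $\overline g(x+iy)=y+y^2\neq y$. So ``$\Psi=\overline P$'' does not yield $\overline g\circ\Psi^{-1}=y$.

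The repair is precisely what the paper carries out in Proposition~\ref{ephemeral} (which for $N=2$ covers your case~(2)): write $\overline g = h(\Re P,\Im P,|z|^2)$ via Schwarz, use the hypothesis to control the relevant Taylor coefficients of $h$, and then prove that $(\Re P,\overline g)$ is a local homeomorphism and a diffeomorphism off $[p]$. In case~(1), once you know $\overline g$ has a non-degenerate index-$1$ critical point at $[p]$, it is the Morse lemma on the reduced space --- not the raw Eliasson form --- that furnishes a $\Psi$ with $\overline g\circ\Psi^{-1}=x^2-y^2$.

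A smaller gap: you should justify why the $\fh$-action respects the block decomposition (so that ``the hyperbolic block carries weight~$0$'' and ``the focus-focus block carries weights $(1,-1)$'' are meaningful); this requires relating the $\fh$-moment map $\Phi_H = \tfrac12\sum\eta_i|z_i|^2$ to the block Hamiltonians inside the linearized $W_0$-moment map, which is not automatic from the non-degeneracy definition alone.
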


 In this paper we do not directly quote the second sentence of part (2) of Proposition~\ref{nondegenerate}. Instead, we show below that in these cases the points are, in fact, ephemeral (see Corollary \ref{cor:tricho}); in the next section we show that that sentence holds for all ephemeral points (see Proposition \ref{ephemeral}).  The final claim that we recall allows us to determine which non-degenerate singular points are ephemeral.

\mute{10/23/24: The following is  used in Corollary \ref{cor:tricho}.} 

\begin{Lemma}[\cite{ST1}, Lemma 6.7]\label{lemma:eph-ell}
    Let $(Y,\omega_Y,\ft \times \R, f:=(\Phi_Y,g))$ be an integrable system, where  $Y = T \times_H \fh^{\circ} \times \C^{h+1}$ is a tall local model.
    Assume that $p = [1,0,0]$ is a non-degenerate critical point of $g$ modulo $\Phi$ with $g(p) = 0$. If $R \colon \C^{h+1} \hookrightarrow Y$ is the inclusion $z \mapsto [1,0,z]$, then
\begin{itemize}
    \item the map $\YmodT \to \R$ taking $[t,0,z] \in \YmodT$ to  $T^1_0 (R^*g)(z)$ is identically zero, and
    \item the zero set of the map $\YmodT \to \R$ taking $[t,0,z] \in \YmodT$ to  $T^2_0 (R^*g)(z)$ is
    \begin{itemize}
        \item 
     a single point if $p$ has purely elliptic type,
     \item  homeomorphic to $\{(x,y) \in \R^2 \mid xy=0\}$ if $p$ has a hyperbolic block and connected $T$-stabilizer,
    and  \item homeomorphic to $\R$ otherwise.
    \end{itemize}
\end{itemize}
\end{Lemma}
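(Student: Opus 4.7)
The plan is to pull $g$ back to the slice $\C^{h+1}$ via $R$, study the resulting $H$-invariant function $R^*g$, and descend to $\YmodT$ through the defining polynomial $\overline{P}$ from Lemma~\ref{trivial}.

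For the first bullet, note that $\Phi_Y([t,\alpha,z]) = \alpha + \Phi_H(z)$ with $\Phi_H$ vanishing quadratically at $z = 0$, so the kernel of $D_p\Phi_Y$ contains every tangent vector in the $\C^{h+1}$-direction. The hypothesis that $p$ is a critical point of $g$ modulo $\Phi$ therefore forces $D_0(R^*g) = 0$, so $T^1_0(R^*g) \equiv 0$ and its reduction to $\YmodT$ is identically zero.

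For the second bullet, set $Q := T^2_0(R^*g)$, an $H$-invariant quadratic form on $\C^{h+1}$. The critical-modulo-$\Phi$ condition yields $\tau_0 \in \ft$ with $d(g - \langle\Phi,\tau_0\rangle)_p = 0$; letting $w := (-\tau_0, 1) \in V = \ft \oplus \R$, we obtain $W_0 = \fh \oplus \R\langle w\rangle$. Under the identification $(T_p\rorbit)^\omega/T_p\rorbit \cong \C^{h+1}$ supplied by the slice, the linearized $W$-moment map has $\fh$-component $\Phi_H$ and $w$-component equal to the Hessian at $0$ of the Hamiltonian $\langle f, w\rangle = g - \langle\Phi,\tau_0\rangle$, namely $Q - \langle\Phi_H,\tau_0\rangle$. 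The non-degeneracy hypothesis then says that $(\Phi_H,\, Q - \langle\Phi_H,\tau_0\rangle)$ is a product of elliptic, hyperbolic, and focus-focus blocks; in particular, since $W_0$ is a one-dimensional extension of $\fh$, at most one block is non-elliptic.

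A case analysis then completes the proof. \emph{Purely elliptic:} $Q = \frac{1}{2}\sum_i c_i |z_i|^2$ for some $c_i \in \R$, and a linear-algebraic argument using the non-degenerate block decomposition together with the tallness of $Y$ (namely $\xi_i \geq 0$, Lemma~\ref{lemma:defn_poly}) shows that $(c_i)$ pairs strictly positively (or strictly negatively) with every non-zero element of the cone $\R^{h+1}_{\geq 0} \cap \{a : \sum_i \eta_i a_i = 0\}$; thus $Q$ is of definite sign on $\Phi_H^{-1}(0) \setminus \{0\}$ and $\overline Q^{-1}(0) = \{[p]\}$. \emph{Hyperbolic with connected $T$-stabilizer:} by Proposition~\ref{nondegenerate}, $N = 1$, so $H$ is trivial, the slice is $\C$, $P(z) = z$, and $Q = \Im(z^2) = 2xy$; the zero set in $\YmodT \cong \C$ is $\{xy = 0\}$. \emph{Hyperbolic with disconnected $T$-stabilizer:} $N = 2$, so $H = \Z_2$ acts on $\C$ by multiplication, $P(z) = z^2$, and $Q = \Im(z^2) = \Im(P(z))$ descends to $\overline Q(w) = \Im(w)$, whose zero set is the real axis, homeomorphic to $\R$. \emph{Focus-focus:} $N = 2$, so $H = S^1$ acts on $\C^2$ with weights $(1,-1)$, $P(z_1, z_2) = z_1 z_2$, and modulo $\Phi_H$ one has $Q = \Im(z_1 z_2) = \Im(P(z))$, which again descends to $\Im(w)$ with real-axis zero set.

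The main obstacle is the purely elliptic case: rigorously verifying that $Q$ has definite sign on $\Phi_H^{-1}(0) \setminus \{0\}$ requires combining the non-degenerate block structure with the sign constraints coming from tallness. The other three cases reduce, via Proposition~\ref{nondegenerate}, to small explicit slices where one can compute the zero set of $\overline Q$ by direct inspection.
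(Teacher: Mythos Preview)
The paper does not give its own proof of this lemma; it is quoted as \cite[Lemma~6.7]{ST1}. There is therefore nothing in the present paper to compare your argument against directly.

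That said, your outline has the right architecture but contains two real gaps. First, the inference ``$N=1$, so $H$ is trivial and the slice is $\C$'' (and the analogous claims for $N=2$) is false in general: for instance $H=(S^1)^h$ acting on $\C^{h+1}$ with weights $(0,e_1,\dots,e_h)$ gives $P(z)=z_0$ and $N=1$, but $h$ can be arbitrary. What saves the computation is that $\Phi_H^{-1}(0)$ always collapses to the axis (or the $|z_1|=|z_2|$ cone) carrying the non-elliptic block, so the picture on $\YmodT$ is effectively one-complex-dimensional; but you must argue this for general $h$, not just the minimal models.

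Second, you assume the normal-form block sits exactly on the given slice coordinates, i.e.\ that $Q|_{\Phi_H^{-1}(0)}$ equals $\Im(z_0^2)$ or $\Im(z_1z_2)$ on the nose. The non-degeneracy hypothesis only guarantees this after a linear symplectomorphism of $\C^{h+1}$ that need not preserve $P$. One must either show the identification can be chosen compatibly with the $H$-weight decomposition, or allow the general $H$-invariant form. In the hyperbolic--disconnected case, for example, the general invariant quadratic in $z_0$ is $a|z_0|^2+b\,\Re(z_0^2)+c\,\Im(z_0^2)$ with $a^2<b^2+c^2$; under $\overline P(z_0)=z_0^2=u+iv$ this becomes $a\sqrt{u^2+v^2}+bu+cv$, whose zero set is two rays from the origin, hence still homeomorphic to $\R$. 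The purely elliptic case, which you yourself flag as incomplete, requires the analogous (and more delicate) verification that $\overline Q$ is definite on $\YmodT\smallsetminus\{[p]\}$.
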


 We conclude this section with the following trichotomy.

\mute{The following is used in Proposition \ref{prop:Morse}.}
\begin{Corollary}\label{cor:tricho}
Let $\left(M,\omega,\ft \times \R, f=\left(\Phi,g\right)\right)$ be an integrable system such that $\left(M,\omega,\Phi\right)$ is
a complexity  one $T$-space. Let $p \in M$ be a non-degenerate singular point.
Then exactly one of the following holds: $p$ has purely elliptic type,  $p$ has a hyperbolic block and connected $T$-stabilizer, or $p$ is ephemeral.
\end{Corollary}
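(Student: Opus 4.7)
My plan is to show that the three alternatives are exhaustive and pairwise exclusive.

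For exhaustiveness, I first handle the short case: by Lemma~\ref{short_elliptic}, every short non-degenerate singular point has purely elliptic type, yielding the first alternative. Assuming $p$ is tall, Proposition~\ref{nondegenerate} says $p$ has at most one non-elliptic block, leaving four subcases: (i) purely elliptic; (ii) a hyperbolic block with connected $T$-stabilizer; (iii) a hyperbolic block with disconnected $T$-stabilizer; (iv) a focus-focus block. Cases (i) and (ii) directly give the first and second alternatives. For cases (iii) and (iv), I use the Marle--Guillemin--Sternberg theorem to identify a $T$-invariant neighborhood of $p$ with an open subset of its local model $Y$ sending $p$ to $[1,0,0]$. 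Proposition~\ref{nondegenerate}(2) yields $N = 2 > 1$ for the defining polynomial. Since $p$ is not purely elliptic, the contrapositive of Lemma~\ref{greg2} forces $p$ to be critical modulo $\Phi$, so Lemma~\ref{lemma:eph-ell} applies and gives both $\overline{T^1_p g} \equiv 0$ (condition~\ref{item:a} of Definition~\ref{eph}, since $N - 1 = 1$) and that the zero set of $\overline{T^2_p g}$ is homeomorphic to $\R$ (condition~\ref{item:b}, since $N = 2$); hence $p$ is ephemeral.

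For mutual exclusivity, cases (1) and (2) are evidently disjoint since (1) has no hyperbolic block. Cases (2) and (3) are disjoint because Proposition~\ref{nondegenerate}(1) forces $N = 1$ in case (2), whereas ephemerality requires $N > 1$.

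The subtle exclusion, which I expect to be the main obstacle, is (1) versus (3): a purely elliptic non-degenerate singular point cannot be ephemeral. Suppose for contradiction that $p$ is both. Ephemerality gives $N > 1$, so Lemma~\ref{greg2} again forces $p$ to be critical modulo $\Phi$, and Lemma~\ref{lemma:eph-ell} gives that the zero set of $\overline{T^2_p g}$ is a single point. If $N = 2$, condition~\ref{item:b} of Definition~\ref{eph} requires this zero set to be homeomorphic to $\R$, an immediate contradiction. If $N \geq 3$, condition~\ref{item:a} requires $\overline{T^{N-1}_p g} \equiv 0$. Since $\Phi_H$ is homogeneous of degree two, the set $\Phi_H^{-1}(0) \subset \C^{h+1}$ is invariant under positive scaling. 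Writing $T^{N-1}_0(R^*g) = \sum_{k=0}^{N-1} a_k$ with each $a_k$ homogeneous of degree $k$, and testing at $tz$ for $z \in \Phi_H^{-1}(0)$ and $t > 0$, yields the polynomial identity $\sum_{k} a_k(z)\, t^k = 0$ in $t$; since it has infinitely many roots, every $a_k$ vanishes on $\Phi_H^{-1}(0)$, and in particular so does $a_2$. Hence $\overline{T^2_p g} \equiv 0$, contradicting the single-point zero set. This homogeneity argument is the only step beyond routine assembly of previously stated results.
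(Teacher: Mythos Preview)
Your proof is correct and follows the same overall route as the paper: reduce to the tall critical case via Lemmas~\ref{short_elliptic} and~\ref{greg2}, pass to the local model via Marle--Guillemin--Sternberg, and then invoke Proposition~\ref{nondegenerate} together with Lemma~\ref{lemma:eph-ell}.

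The main difference is one of thoroughness. The paper's proof is quite terse: it handles exhaustiveness (showing that a tall non-degenerate critical point that is neither purely elliptic nor ``hyperbolic with connected stabilizer'' has $N=2$ and is ephemeral by Lemma~\ref{lemma:eph-ell}) and leaves the mutual exclusivity of the three cases implicit. You, by contrast, spell out the exclusivity carefully. For (2) versus (3) you correctly observe that Proposition~\ref{nondegenerate}(1) forces $N=1$, blocking ephemerality. For the genuinely subtle case (1) versus (3)---a purely elliptic tall critical point with $N>1$---you supply the homogeneity argument: since $\Phi_H^{-1}(0)$ is a cone, vanishing of the full reduced Taylor polynomial $\overline{T^{N-1}_p g}$ forces each homogeneous piece to vanish there, hence $\overline{T^2_p g}\equiv 0$, contradicting the single-point zero set from Lemma~\ref{lemma:eph-ell}. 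This step is not written out in the paper's proof, and your argument is a clean way to make the ``exactly one'' precise.
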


\begin{proof} By Lemmas \ref{greg2} and \ref{short_elliptic}, it suffices to consider the case that $p$ is a tall critical point of $g$ modulo $\Phi$.  
So by the Marle-Guillemin-Sternberg local normal form, we may assume that $M$ is a tall local model $Y = T \times_H \fh^\circ \times \C^{h+1}$, that $\Phi = \Phi_Y$, and that $p =
[1,0,0]$. If $p$ does not have elliptic type and $p$ does not have a hyperbolic block and connected $T$-stabilizer, then the degree $N$ of the defining monomial of $Y$ is $2$ by Proposition \ref{nondegenerate}. Hence, the result follows from Lemma \ref{lemma:eph-ell}.
\end{proof}
\section{Ephemeral points}\label{sec:ephemeral_points}

\mute{In this section we refer to/use: Definition~\ref{eph}, Remark \ref{rmk:proto-ephemeral}, Lemma \ref{trivial}, Proposition \ref{connected}.}
In this section, we study ephemeral points in
an integrable system $(M,\omega, \ft \times \R, f:=(\Phi,g))$ that extends a complexity one $T$-action.
We prove that, although every  ephemeral point $p \in M$ is singular, we can put a smooth structure
on the reduced space near $[p]$ so that the induced function $\overline{g}$ is smooth and has no critical points; see Proposition~\ref{ephemeral}.
Hence, the singularity is transitory; this explains the terminology ``ephemeral" in Definition~\ref{eph}. In particular, every ephemeral point satisfies the hypotheses of Proposition \ref{connected}.

Before proving this claim, we need a few definitions.
Let $Y = T \times_H \fh^\circ \times \C^{h+1}$ be a local model.
By a slight abuse of notation, we call a $T$-invariant function $g \colon Y \to \R$ a {\bf $\boldsymbol{T}$-invariant (homogeneous) polynomial} if its restriction to $\fh^\circ \times \C^{h+1}$ is a (homogeneous) polynomial
 (see \cite[Definition 2.13]{ST1}).
More generally, recall that if $g \colon Y \to \R$ is any smooth $T$-invariant function,  the degree $\ell$ reduced Taylor polynomial at $p=[1,0,0] \in Y$ is the function $\overline{T^{\ell}_p g} \colon \YmodT \to \R$ defined by 
$$ \overline{T^{\ell}_p g}(\llbracket t,0,z \rrbracket) = T^{\ell}_0(R^*g)(z) \quad \text{for all } \llbracket t,0,z \rrbracket \in \YmodT.$$
Here,  $R \colon \C^{h+1} \hookrightarrow Y$ denotes the inclusion $z \mapsto [1,0,z]$.

\begin{Lemma}\label{lemma:invariant2}
Let $Y = T \times_H  \fh^{\circ} \times \C^{h+1}$ be a tall 
local model with moment map $\Phi_Y \colon Y \to \ft^*$ and defining monomial $P \colon \C^{h+1} \to \R$ of degree $N$.
Given a $T$-invariant smooth function $g \colon Y \to \R$,
 there exists a smooth function $h \colon \R^3 \to \R$ such that,
for all $\llbracket t,0,z \rrbracket \in \YmodT$,
$$
        \overline{g}( \llbracket t,0,z \rrbracket) = h(\Re P(z),\Im P(z),|z|^2).
$$
For all $\ell
\in \Z_{\geq 0}$, the degree $\ell$ reduced Taylor polynomial of $g$ is
  $$\overline{T_p^\ell g} (\llbracket t,0,z \rrbracket) =  
\hspace{-.15in}
\mathlarger{\mathlarger{\sum}}_{\mathsmaller{Ni + Nj + 2k \leq \ell}} \hspace{-.1in}
\frac{1}{i! j! k!} 
 \frac{\del^{i+j+k}h}{\del x^i \del y^j \del \rho^k}(0) \,
  (\Re P(z))^i (\Im P(z))^j |z|^{2k}
$$
for all  $ \llbracket t,0,z \rrbracket  \in \YmodT$.  Finally, if $\overline{T^{\ell - 1}_p g} \colon \YmodT \to \R$ vanishes identically, we may  further assume
that  $\frac{\del^{i+j+k}h}{\del x^i \del y^j \del \rho^k}(0) = 0$ for all $i, j, k \in \Z_{\geq 0}$ so that $Ni + Nj + 2k < \ell.$
\end{Lemma}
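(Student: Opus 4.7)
The plan is to apply G.~Schwarz's theorem on smooth invariants of a compact group to obtain $h$, then Taylor-expand to get the explicit formula, and finally use a polynomial subtraction trick to kill the low-order coefficients.

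First, I would establish the formula $\overline g(\llbracket t, 0, z \rrbracket) = h(\Re P(z), \Im P(z), |z|^2)$ as follows. Since $g$ is $T$-invariant, $\overline g(\llbracket t, 0, z \rrbracket) = (R^*g)(z)$ for $z \in \Phi_H^{-1}(0)$, and $R^*g$ is a smooth $H$-invariant function on $\C^{h+1}$. Using the short exact sequence of Lemma~\ref{lemma:defn_poly}, an $H$-invariant monomial $z^a \bar z^b$ satisfies $a - b \in \xi \Z$, and the tall hypothesis $\xi_i \geq 0$ then forces every such monomial into the form $\prod_l |z_l|^{2 c_l}\, P^k$ or $\prod_l |z_l|^{2 c_l}\, \bar P^{|k|}$; hence the ring of real $H$-invariant polynomials on $\C^{h+1}$ is generated by $|z_0|^2, \ldots, |z_h|^2, \Re P, \Im P$. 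Since $H$ is compact, G.~Schwarz's theorem yields a smooth $\tilde h \colon \R^{h+3} \to \R$ with $R^*g(z) = \tilde h(|z_0|^2, \ldots, |z_h|^2, \Re P(z), \Im P(z))$. Lemma~\ref{lemma:defn_poly} also gives $\ker(u \mapsto \sum_l \eta_l u_l) = \xi \R$ in the tall case, so on $\Phi_H^{-1}(0)$ each $|z_l|^2 = (\xi_l / N) |z|^2$; setting $h(x, y, \rho) := \tilde h(\xi_0 \rho/N, \ldots, \xi_h \rho/N, x, y)$ then gives the desired formula.

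Next, the Taylor formula follows by direct expansion. Taylor-expanding $\tilde h$ at the origin and substituting $u_l = |z_l|^2$, $x = \Re P$, $y = \Im P$ expresses $T_0^\ell(R^*g)(z)$ as a sum over $(\alpha, i, j) \in \Z_{\geq 0}^{h+1} \times \Z_{\geq 0} \times \Z_{\geq 0}$ with $2|\alpha| + N(i+j) \leq \ell$; restricting to $\Phi_H^{-1}(0)$ via $|z_l|^2 = (\xi_l/N)|z|^2$, grouping by $k := |\alpha|$, and applying the chain-rule identity
$$\frac{1}{k!} \frac{\del^{i+j+k} h}{\del x^i \del y^j \del \rho^k}(0) = \sum_{|\alpha|=k} \frac{1}{\alpha!} \prod_l \left(\frac{\xi_l}{N}\right)^{\!\alpha_l} \frac{\del^{i+j+k}\tilde h}{\del u^\alpha \del x^i \del y^j}(0)$$
yields the stated formula.

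For the final assertion, if $\overline{T^{\ell-1}_p g}$ vanishes identically, then applying the Taylor formula with $\ell$ replaced by $\ell - 1$ shows that the polynomial
$$Q_0(x, y, \rho) := \sum_{Ni + Nj + 2k \leq \ell - 1} \frac{1}{i! j! k!} \frac{\del^{i+j+k} h}{\del x^i \del y^j \del \rho^k}(0)\, x^i y^j \rho^k$$
satisfies $Q_0(\Re P(z), \Im P(z), |z|^2) = 0$ for all $z \in \Phi_H^{-1}(0)$. Replacing $h$ by the smooth function $h - Q_0$ preserves the identity $\overline g(\llbracket t, 0, z \rrbracket) = h(\Re P, \Im P, |z|^2)$ while zeroing out every Taylor coefficient of $h$ at the origin with $Ni + Nj + 2k < \ell$. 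The main conceptual input is Schwarz's theorem together with the identification of the invariant generators via Lemma~\ref{lemma:defn_poly}; the remainder is bookkeeping with the multivariable chain rule and the subtraction trick.
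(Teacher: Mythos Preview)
Your proof is correct and follows essentially the same strategy as the paper's: Schwarz's theorem, a homogeneity/grading argument for the Taylor expansion, and the same subtraction trick for the final clause. The only notable difference is the choice of generating invariants: the paper invokes \cite[Lemma~2.11]{ST1} to get $\Re P,\Im P,|z|^2$ and the components of $\Phi_Y$ as generators of the $T$-invariants on all of $Y$ (so $h$ appears directly after restricting $\breve h$ to $\R^3\oplus\{0\}$), whereas you work on $\C^{h+1}$ with the larger generating set $|z_0|^2,\dots,|z_h|^2,\Re P,\Im P$ and then collapse the $|z_l|^2$ to $|z|^2$ via the relation $|z_l|^2=(\xi_l/N)|z|^2$ on $\Phi_H^{-1}(0)$, at the cost of the extra multinomial chain-rule identity.
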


\begin{proof}
Let $\Pi \colon Y \to \R^3 \oplus \ft^*$ denote the map 
$$\Pi([t,\alpha,z]) = (\Re P(z), \Im P(z), |z|^2,
\Phi_Y([t,\alpha,z])). $$
\noindent By \cite[Lemma 2.14]{ST1}, the algebra of $T$-invariant polynomials on $Y$
is generated by the real and imaginary parts of the defining monomial $P \colon Y \to \C$, the map $[t,\alpha,z] \mapsto |z|^2$, and the components of  $\Phi_Y$. Hence, since $g$ is $T$-invariant, by Schwarz's
theorem applied to the $H$-action on $\fh^\circ \times \C^{h+1}$  there exists a smooth function $ \breve{h} \colon \R^3 \oplus \ft^* \to
\R$ such that $g = \Pi^* \breve{h}$; see \cite{Sch}.
Let $h \colon \R^3 \to \R$ be the restriction of $\breve{h}$ to  $\R^3 \oplus \{0\}$, that is,
let $h(x,y,\rho) := \breve{h}(x,y,\rho,0)$. Then $ \overline{g}( \llbracket t,0, z \rrbracket ]) 
= h(\Re P(z),\Im P(z),|z|^2)$ for all $ \llbracket t,0,z \rrbracket \in \YmodT,$ as required.

We introduce a grading on polynomials on $\R^3 \oplus \ft^* \simeq \R^3 \oplus \fh^\circ \oplus \fh^*$, so that  the first two coordinates of $\R^3$ have degree $N$, the third coordinate has degree $2$,  the components of $\fh^\circ$ have degree $1$, and the components of $\fh^*$ have degree $2$. Since each component of $\Pi$ is a
$T$-invariant polynomial on $Y$ of  the corresponding degree, for any integer $m$  the pullback under $\Pi$ of the Taylor polynomial of $\breve{h}$ of graded degree $m$ at $0$ is the $T$-invariant polynomial of degree $m$ on $Y$ whose
restriction to $\fh^\circ \times \C^{h+1}$ is the Taylor polynomial  $T^m_0(R^*g)$. 
Moreover, let
\begin{equation*}
  \sum_{Ni + Nj + 2k \leq  m}
        \frac{1}{i!j!k!}\frac{\del^{i+j+k}h}{\del x^i \del y^j \del \rho^k}(0) x^i y^j \rho^k 
\end{equation*}
\noindent
be the Taylor polynomial of  $h$ of graded degree $m$  at $0 \in \R^3$. Note that it is
the restriction to $\R^3 \simeq \R^3 \oplus \{0\}$
of the Taylor polynomial of  $\breve{h}$ of graded degree $m$ at $0 \in \R^3 \oplus \ft^*$.
Therefore, for all $ \llbracket t,0,z \rrbracket \in \YmodT$,
\begin{equation*}
       \overline{T_p^{m} g }( \llbracket t,0,z \rrbracket ) 
    = \sum_{Ni + Nj + 2k \leq m} 
\frac{1}{i!j!k!}\frac{\del^{i+j+k}h}{\del x^i \del y^j \del \rho^k}(0)\,
(\Re P(z))^i (\Im P(z))^j |z|^{2k}.
\end{equation*}
 Finally, if $\overline{T^{\ell-1}_p g}$ vanishes on $\YmodT$, then  for
all $ \llbracket t,0,z \rrbracket \in \YmodT$,
\begin{equation*}
  \sum_{Ni + Nj + 2k < \ell} \frac{1}{i!j!k!}\frac{\del^{i+j+k}h}{\del x^i \del y^j \del \rho^k}(0)\,
  (\Re P(z))^i (\Im P(z))^j |z|^{2k} = 0. 
\end{equation*}
\noindent
Hence, if we replace $h$ by  
$$h(x,y,\rho) - \sum_{Ni + Nj + 2k < \ell}\frac{1}{i!j!k!}\frac{\del^{i+j+k}h}{\del x^i \del y^j \del \rho^k}(0)\, x^i y^j \rho^k,$$
\noindent 
then this function (which we also denote by $h$) satisfies
$\frac{\del^{i+j+k}h}{\del x^i \del y^j \del \rho^k}(0) = 0$ for all $i, j, k \in \Z_{\geq 0}$ so that $Ni + Nj + 2k < \ell$, and still satisfies the remaining properties.
\end{proof}

We can now prove the main result of this section, which generalizes Example \ref{ex:modelephemeral}.

\begin{Proposition}\label{ephemeral}
Let $(M,\omega, \ft \times \R, f:=(\Phi,g))$ be an integrable system such that $(M,\omega,\Phi)$ is a complexity one $T$-space.  Let $p \in \Phi^{-1}(0) \cap g^{-1}(0)$ be an ephemeral point.
Then there exist an open neighborhood $U$ of $[p] \in \MmodT$ 
  and a map $\Psi \colon U \to \R^2$ taking $[p]$ to $(0,0)$ satisfying:
  \begin{enumerate}
  \item \label{item:8} $\Psi \colon U \to \Psi(U)$ is a homeomorphism onto an open set;
  \item \label{item:12} $\Psi$ restricts to a diffeomorphism on $U \smallsetminus \{[p]\}$; and
      \item \label{item:13} $(\overline{g} \circ \Psi^{-1})(x,y)= y$ for all $(x,y) \in \Psi(U)$.
  \end{enumerate}
\end{Proposition}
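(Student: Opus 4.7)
The plan is to reduce, via the Marle-Guillemin-Sternberg normal form theorem and Definition~\ref{eph}, to the case where $M = Y = T \times_H \fh^{\circ} \times \C^{h+1}$ is a tall local model, $\Phi = \Phi_Y$, $p = [1,0,0]$, and the hypotheses~\ref{item:a} and~\ref{item:b} hold in $Y$. By Lemma~\ref{trivial}, $\bar P \colon \YmodT \to \C$ is a homeomorphism and restricts to a diffeomorphism off $[p]$; writing $\bar P = x + iy$ gives smooth global coordinates on $\YmodT \setminus \{[p]\}$. The task becomes to find a continuous $\phi$ defined on a neighborhood $U$ of $[p]$ and smooth on $U \setminus \{[p]\}$ such that $\Psi := (\phi, \bar g)$ satisfies~\ref{item:8}--\ref{item:13}.

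First I would apply Lemma~\ref{lemma:invariant2} with $\ell = N$, using hypothesis~\ref{item:a} to write $\bar g = h(\Re P, \Im P, |z|^2)$ with all partial derivatives of $h$ at the origin of graded degree $< N$ vanishing. A scaling and $S^1$-equivariance argument on $\YmodT$ gives $\overline{|z|^2} = c_0(x^2 + y^2)^{1/N}$ for some $c_0 > 0$. Hence in polar coordinates $x = r\cos\theta$, $y = r\sin\theta$ on $\YmodT \setminus \{[p]\}$,
$$\bar g(r,\theta) = r\, G(\theta) + o(r), \qquad G(\theta) = a\cos\theta + b\sin\theta + c',$$
with $c' = 0$ when $N$ is odd, and $c'$ a certain constant proportional to $\partial_\rho^{N/2} h(0)$ when $N$ is even. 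Hypothesis~\ref{item:b} forces $G$ to have exactly two simple zeros $\theta_\pm$ on the circle: in the odd case, $(a,b) \neq (0,0)$; in the even case, $|c'| < \sqrt{a^2+b^2}$. Since the leading gradient of $\bar g$ depends only on $\theta$ and has magnitude $\sqrt{G(\theta)^2 + G'(\theta)^2}$, which is bounded below by the simplicity of the zeros, $\bar g$ is a smooth submersion on some punctured neighborhood $U \setminus \{[p]\}$. An implicit function argument then identifies $Z := \bar g^{-1}(0) \cap U$ as two smooth arcs issuing from $[p]$ with tangent directions $\theta_\pm$, and each level set $\bar g^{-1}(c) \cap U$ with $c \neq 0$ small as a single smooth arc.

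Next I would set $\phi(x, y) := a y - b x$ and $\Psi := (\phi, \bar g)$. In polar coordinates, $\phi = r B(\theta)$ with $B(\theta) := a\sin\theta - b\cos\theta$. A direct computation shows that the Jacobian determinant of $\Psi$ in the punctured plane equals, to leading order, $B(\theta)G'(\theta) - B'(\theta) G(\theta) = -(a^2+b^2) - c'(a\cos\theta + b\sin\theta)$, whose magnitude is at least $\sqrt{a^2+b^2}\,(\sqrt{a^2+b^2} - |c'|) > 0$ by hypothesis~\ref{item:b}; so $\Psi$ restricts to a local diffeomorphism on $U \setminus \{[p]\}$, giving~\ref{item:12}. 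Property~\ref{item:13} is immediate. At the zeros $\theta_\pm$ of $G$, a similar calculation yields $B(\theta_\pm) = \pm\sqrt{(a^2+b^2) - c'^2}$, so $\phi$ changes sign across the two arcs of $Z$ at $[p]$.

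The main obstacle is to verify property~\ref{item:8}, namely that, after shrinking $U$, the map $\Psi$ is a homeomorphism onto an open neighborhood of $(0,0) \in \R^2$. Injectivity reduces to $\phi$ being strictly monotone along each connected component of a level set of $\bar g$, which follows from the transversality of the previous paragraph; and surjectivity onto a neighborhood of the origin follows from the opposite-sign property, since along $Z$ the value of $\phi$ crosses zero transversally, and by continuity each nearby level $\bar g^{-1}(c)$ maps under $\phi$ to an interval containing a uniform neighborhood of $0$. The delicate point is that $\overline{|z|^2} = c_0(x^2+y^2)^{1/N}$ entering $\bar g$ is only H\"older continuous at $[p]$, so no classical inverse function theorem applies there; instead, all the topological work must be carried out in polar coordinates away from $[p]$ and passed to the limit using continuity, with hypotheses~\ref{item:a} and~\ref{item:b} used solely to control the leading-order form $r G(\theta)$.
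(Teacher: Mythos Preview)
Your overall strategy matches the paper's: reduce to the local model, apply Lemma~\ref{lemma:invariant2} with hypothesis~\ref{item:a}, use $\bar P$ to identify $\YmodT$ with $\C$ and write the leading part of $\bar g$ as $rG(\theta)$, read hypothesis~\ref{item:b} as the inequality $|c'|<\sqrt{a^2+b^2}$, and take $\Psi=(\phi,\bar g)$ with $\phi$ linear in $(\Re P,\Im P)$. The paper adds one preliminary simplification you omit: it uses the $(S^1)^{h+1}$-action on $Y$, which rotates the $P$-plane, to arrange $a=\partial_x h(0)=0$; after that rotation your $\phi=ay-bx$ becomes a scalar multiple of $\Re P$, and the paper simply takes $\Psi=(\Re P,\bar g)$.

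The genuine gap is in your argument for~\eqref{item:8}. You try to get injectivity via monotonicity of $\phi$ along level sets of $\bar g$, but that requires each $\bar g^{-1}(c)\cap U$ to be connected, which you assert (``a single smooth arc'') without proof; this is awkward precisely because $\bar g$ fails to be $C^1$ at $[p]$. The fix---implicit in your own Jacobian computation, and exactly what the paper does---is to \emph{switch the roles of $\phi$ and $\bar g$}. The level sets of the linear function $\phi$ are straight lines, so their intersections with a convex $U$ are automatically connected. Your Jacobian bound says precisely that the directional derivative $(a\partial_x+b\partial_y)\bar g=-\det D\Psi$ is bounded away from zero on $U\smallsetminus\{[p]\}$; a careful Mean Value Theorem (continuity at $[p]$, differentiability on the punctured segment) then shows $\bar g$ is strictly monotone on each such line, giving injectivity of $\Psi$. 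Invariance of Domain then yields that $\Psi$ is a homeomorphism onto an open set---no separate surjectivity argument is needed.

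For the error control, your ``$+\,o(r)$'' with ``$o(1)$ gradient'' is correct but delicate to justify directly. The paper makes this step rigorous by applying Hadamard's lemma (Lemma~\ref{trick}) $m=\lceil N/2\rceil$ times to write $h=x h_x+y h_y+\rho^m h_\rho$, and then computing $\partial_y\check h$ explicitly, where $\check h(x,y):=h(x,y,C(x^2+y^2)^{1/N})$. This is where the even/odd dichotomy in $N$ enters cleanly and replaces your polar leading-order-plus-remainder expansion.
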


\begin{proof}
By the Marle-Guillemin-Sternberg local normal form theorem, we may assume that $M$ is a tall local model $Y = T \times_H \fh^\circ \times \C^{h+1}$, that $\Phi = \Phi_Y$, and that $p =
[1,0,0]$. Let $N$ be the degree of the associated defining monomial $P \colon \C^{h+1} \to \C$. Since $p$ is ephemeral, we may further assume that the reduced Taylor polynomial $\overline{T^{N-1}_p g} \colon \YmodT \to \R$ vanishes identically and that the zero set of $\overline{T^N_p g} \colon \YmodT \to \R$
is homeomorphic to $\R$. 

Set $m := \lceil N/2 \rceil$. By Lemma \ref{lemma:invariant2}, there exists a smooth function $h \colon \R^3 \to \R$ 
such that:
\begin{enumerate}[label=(\roman*),ref=(\roman*)]
\item \label{item:taylor}For all $ \llbracket t,0,z \rrbracket \in \YmodT$,
\begin{equation*}
\begin{split}
 \overline{g}(\llbracket t,0,z \rrbracket) &= h(\Re P(z),\Im P(z),|z|^2), \quad \text{and} \\
\overline{T^N_pg} (\llbracket t,0,z \rrbracket)   & =
\begin{cases} 
\frac{\del h}{\del x}(0)\Re P(z) + \frac{\del h}{\del y}(0) \Im P(z) 
& \mbox{if $N$ is odd,} \\
\frac{\del h}{\del x}(0)\Re P(z) + \frac{\del h}{\del y}(0) \Im P(z) 
+ \frac{1}{m!}\frac{\del^m h}{\del
  \rho^m}(0) |z|^{2m} & \mbox{if $N$ is even.}
\end{cases}
\end{split}
\end{equation*}
\item For each integer $0 \leq k < m$, $\frac{\del^k h}{\del \rho^k}(0) = 0$.
\end{enumerate}

The group $(S^1)^{h+1}$ acts on $Y$ by $T$-equivariant symplectomorphisms that preserve $\Phi_Y$, and hence acts on $\YmodT$. If we replace $\overline{g}$ by its pullback under such an automorphism $\lambda \in (S^1)^{h+1}$, we also replace $h$ by its pullback under an automorphism of $\R^3$ that rotates the first two coordinates by $P(\lambda) \in S^1$. Hence, since  $P((S^1)^{h+1}) = S^1$, we may further
assume that $\frac{\del h}{\del y}(0) \geq 0$ and $\frac{\del h}{\del x}(0)= 0$. 

Define the map $\Psi \colon \YmodT \to \R^2$ by 
$$\Psi(\llbracket t,0,z \rrbracket) = (\Re P(z), \overline{g}(\llbracket t,0,z \rrbracket)).$$
By definition,  $\Psi$  satisfies condition \eqref{item:13} in the statement
for any $U \subseteq \YmodT.$
By \cite[Lemma 2.12]{ST1} there exists $C > 0$ so that 
\begin{equation}\label{Ceq}
|z|^2 = C|P(z)|^\frac{2}{N} \text{ for all } \llbracket t,0,z \rrbracket \in \YmodT.
\end{equation}
Define a continuous function $\check{h} \colon \R^2 \to \R$  by
\begin{equation}
  \label{eq:11}
  \check{h}(x,y) := h(x,y,C(x^2+y^2)^\frac{1}{N}).
  \end{equation}
By item \ref{item:taylor} above and \eqref{Ceq},
\begin{equation}\label{eq:psi} \Psi(\llbracket t,0,z \rrbracket) = (\Re P(z), H(\Re P(z),\Im P(z)))\text{ for all } \llbracket t,0,z \rrbracket \in \YmodT.
\end{equation} 
Our aim is to show that there exists a convex open neighborhood $V$ of $(0,0) \in \R^2$
such that $\frac{\partial \check h}{\partial y} > 0$ on $V \smallsetminus \{(0,0)\}$.
Suppose that this claim holds.
Since $V$ is convex,
the intersection $V \cap (\{x_0\} \times \R)$ is connected 
for all $x_0 \in \R$.
Hence,  since $ \check{h} (0,0) = 0$,
a careful application of the Mean Value Theorem shows that  $\check{h} (x_0,y) < \check{h} (x_0,y')$ for
all  $(x_0,y)$ and $(x_0,y')$  in $V$ with $y < y'$.
Thus, the map $(x,y) \mapsto (x,\check{h} (x,y))$ is a continuous injection from $V$ to $\R^2$.  Therefore, by Invariance of Domain, the image of $V$ is open in $\R^2$ and the above map is a homeomorphism onto its image.  Since the map $(x,y) \mapsto (x,\check{h} (x,y))$ is a submersion on $V \smallsetminus \{(0,0)\}$, it is a diffeomorphism
from $V \smallsetminus \{(0,0)\}$ to its image.
Moreover, by  Lemma \ref{trivial}, the map $\overline{P} \colon \YmodT \to \C$ is a homeomorphism that restricts to a diffeomorphism on the complement of $[p]$. So by \eqref{eq:psi},
there exists an open neighborhood $U$ of $[p]$ in $\YmodT$ such that  $\Psi \colon U \to \Psi(U)$
satisfies conditions \eqref{item:8} and \eqref{item:12} in the statement.  Hence, it remains to show that the map $(x,y) \mapsto (x,\check{h}  (x,y))$ has the desired properties.

Since $\frac{\del h}{\del x}(0)= 0$, item \ref{item:taylor} above and \eqref{Ceq} imply that, under the identification given by $\overline{P} \colon \YmodT \to \C$,
\begin{equation*}
  \overline{T^N_pg}(x + i y) =
  \begin{cases}
    \frac{\del h}{\del y}(0) y & \mbox{if $N$ is odd}, \\
    \frac{\del h}{\del y}(0) y +  \frac{C^m}{m!} \frac{\del^m h}{\del
  \rho^m}(0) (x^2 + y^2)^\frac{1}{2} & \mbox{if $N$
      is even},
\end{cases}
\end{equation*}
for all $x+ i y \in \C$.
Since the zero set of 
$\overline{T^N_pg}$ is homeomorphic to $\R$ and since $\frac{\del h}{\del y}(0) \geq 0$, we obtain that $\frac{\del h}{\del y}(0) > 0$ if $N$ is odd; by using polar coordinates to calculate the zero set, we also obtain that
$\frac{\del h}{\del y}(0) >  \frac{C^m }{m!} \big| \frac{\del^m h}{\del \rho^m}(0) \big|$ if $N = 2m$.

Since $\frac{ \del^k h}{\del \rho^k}(0) = 0$ for all $0 \leq k < m$,
by applying Lemma \ref{trick} below $m$ times
and by combining terms as necessary, 
there exist smooth functions $h_x,h_y,h_\rho$ on $\R^3$  such that
\begin{equation}
\label{eq:10}
h= xh_x + yh_y + \rho^m h_{\rho}.
\end{equation}

By combining \eqref{eq:11} with \eqref{eq:10}, on $\R^2 \smallsetminus \{(0,0)\}$ we have that
\begin{equation*}
    \begin{split}
        \frac{\del \check{h} }{\del y} =
h_y &+ \frac{2 m C^m y}{N (x^2+y^2)^\frac{N-m}{N}}
h_\rho + x \frac{\del h_x}{\del y} + 
y \frac{\del h_y}{\del y} +C^m(x^2+y^2)^\frac{m}{N} \frac{\del h_\rho}{\del y} \\
&+ \frac{2 Cy }{N  (x^2+y^2)^{\frac{N-1}{N} }}\left( x \frac{\del h_x}{\del \rho} 
+ y \frac{\del h_x}{\del \rho} + C^m(x^2+y^2)^\frac{m}{N}  \frac{\del h_\rho}{\del \rho}\right),
    \end{split}
\end{equation*}
\noindent
where $\frac{\del \check{h} }{\del y}$ is evaluated at $(x,y) \in \R^2 \smallsetminus \{(0,0)\}$, and $h_x, h_y,
h_{\rho}$ and their partial derivatives are evaluated at $(x,y, C
(x^2+y^2)^\frac{1}{N})$. If $N$ is odd, then $2m = N + 1$, and so each term except the first approaches $0$ as $(x,y)$ approaches $(0,0)$. Therefore,  $\frac{\del \check{h} }{\del y}$ approaches $h_y(0) = \frac{ \partial h}{\partial y}(0)$, which is positive.
If $N$ is even, then $2m = N$, and so the first term approaches $h_y(0)$, the second term is equal to $C^m y(x^2 + y^2)^{-\frac{1}{2}}  h_\rho$, and each of  the remaining terms approaches $0$. 
Moreover, $h_y(0) = \frac{\partial h}{\partial y}(0)$ is greater than $|C^m h_\rho(0)| =  \frac{C^m}{m!} |\frac{\partial^m h}{\partial \rho^m}(0)|$.  Hence, in either case there exists a convex neighborhood $V$
of $(0,0)$ so that
$\frac{\del \check{h} }{\del y}$
is positive on $V \smallsetminus \{(0,0)\}$, as required.
\end{proof}

Above, we used the following result,
proved in \cite[Lemma 2.1]{Mil}.

\begin{Lemma}\label{trick}
  Let $V \subseteq \R^n$ be a convex neighborhood of $0$ and let $h \colon V
  \to \R$ be a smooth function. If $h(0) = 0$, then there exist smooth
  functions $h_1,\ldots, h_n \colon V \to \R$ such that, for all
  $(x_1,\ldots,x_n) \in V$, 
  $$ h(x_1,\ldots,x_n) = \sum\limits_{i=1}^n x_i h_i(x_1,\ldots,x_n). $$
\end{Lemma}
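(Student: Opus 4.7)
The plan is to use the classical Hadamard's lemma trick: parametrize the segment from $0$ to $x$ and apply the fundamental theorem of calculus to the auxiliary function $t \mapsto h(tx)$. First I would observe that, since $V$ is a convex neighborhood of $0$, the segment $\{tx : t \in [0,1]\}$ is contained in $V$ for every $x \in V$, so the map $(t,x) \mapsto h(tx)$ is well-defined and smooth on the strip $[0,1] \times V$.

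Next, using $h(0) = 0$ together with the fundamental theorem of calculus and the chain rule, I would write
\begin{equation*}
h(x) = h(x) - h(0) = \int_0^1 \frac{d}{dt} h(tx)\, dt = \int_0^1 \sum_{i=1}^n x_i \frac{\del h}{\del x_i}(tx)\, dt = \sum_{i=1}^n x_i h_i(x),
\end{equation*}
where
\begin{equation*}
h_i(x) := \int_0^1 \frac{\del h}{\del x_i}(tx)\, dt
\end{equation*}
for each $i \in \{1,\dots,n\}$. It then remains to check that each $h_i \colon V \to \R$ is smooth; this follows from the standard result on smooth dependence of a parameter integral, since the integrand is smooth on $[0,1] \times V$ and the interval $[0,1]$ is compact, so one may differentiate under the integral sign to any order.

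I do not anticipate any serious obstacle. The only subtleties are that the ray from $0$ to $x$ must remain in $V$ (ensured by convexity) and that differentiation under the integral sign must be justified (immediate because integration is over the compact interval $[0,1]$ against a smooth integrand). This is precisely the classical statement of Hadamard's lemma, already cited from \cite[Lemma 2.1]{Mil}.
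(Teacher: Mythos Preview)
Your proof is correct and is exactly the standard Hadamard argument from \cite[Lemma~2.1]{Mil}; the paper does not give its own proof but simply cites that reference, so your approach coincides with it.
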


\section{Proofs of the main results}\label{sec:proofs}

\mute{In this section we use: \ref{connected}, \ref{gcrit}, \ref{nondegenerate}, \ref{ephemeral}, \ref{cor:tricho}, \ref{thm:connected}, \ref{veryexceptional}, \ref{greg2}.}
 In this section, we prove our main results, Theorems \ref{thm:too_bad} and \ref{thm:main};  we also state and prove a related result, Proposition \ref{prop:converse}. Consider an integrable system that extends a complexity one $T$-space. Assume that each tall singular point is either non-degenerate or ephemeral.
The fundamental idea of this section is that the results of Sections~\ref{section:complexity_one}, \ref{section:non-deg}, and \ref{sec:ephemeral_points} combined show  
that each component of the reduced space containing more than one point 
can be given the structure of a smooth oriented surface so that the induced function
$\overline{g}$ is a Morse function.
More precisely, we have the following generalization of \cite[Proposition 7.1]{ST1} that includes ephemeral points.

\mute{Proposition \ref{prop:Morse} is used in the proofs of Theorems \ref{thm:too_bad} and \ref{thm:main}.}
\begin{Proposition}\label{prop:Morse}
Let $(M, \omega, \ft \times \R, f = (\Phi,g))$ be an integrable system such that $(M,\omega,\Phi)$ is a complexity one $T$-space.  Assume that each tall 
singular point in $(M,\omega,\ft \times \R, f)$ is either non-degenerate or ephemeral. Fix $\beta \in \ft^*$.
Each component of the reduced space $\Phi^{-1}(\beta)/T$  containing more than one point
  can be given the structure of a smooth  oriented surface such that $\overline{g} \colon \Phi^{-1}(\beta)/T \to \R$ is a Morse function. Moreover, given $p \in \Phi^{-1}(\beta)$:
\begin{enumerate}[label=(\roman*)]
\item $[p]$ is a critical point of $\overline{g}$ of index $1$  exactly if $p$ is a  non-degenerate singular point with a hyperbolic block and
connected $T$-stabilizer,
\item $[p]$ is a critical point of $\overline{g}$ of index $0$ or $2$ exactly if $p$ is a critical point of $g$ modulo $\Phi$ with purely elliptic type, and
\item $[p]$ is a regular point of $\overline{g}$ exactly if $p$ is ephemeral or is a regular point of $g$ modulo $\Phi$.
\end{enumerate}
\end{Proposition}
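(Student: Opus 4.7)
The plan is to apply Proposition \ref{connected} to each component of $\Phi^{-1}(\beta)/T$ containing more than one point, by producing, at every critical point $p \in \Phi^{-1}(\beta)$ of $g$ modulo $\Phi$ in such a component, a chart $\Psi_p \colon U_p \to \R^2$ of the sort required by that proposition. The key preliminary observation is that short points are isolated in their reduced space, so every $p$ contributing to a multi-point component is tall; hence by the assumption of the proposition, every singular such $p$ is either non-degenerate or ephemeral.

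I would then split the critical points of $g$ modulo $\Phi$ lying in a multi-point component into cases according to Corollary \ref{cor:tricho} and the ephemeral hypothesis. If $p$ is non-degenerate and has purely elliptic type, Proposition \ref{gcrit} provides (after possibly replacing $g$ by $-g$ near $p$) a chart $\Psi_p$ in which $\overline{g}\circ\Psi_p^{-1}$ has the Morse model $\pm(x^2+y^2)$, giving index $0$ or $2$ at $\Psi_p([p])$. If $p$ is non-degenerate with a hyperbolic block and connected $T$-stabilizer, Proposition \ref{nondegenerate}(1) gives a (smooth) chart where $\overline{g}\circ\Psi_p^{-1}(x,y)=x^2-y^2$, producing index $1$. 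In the remaining non-degenerate subcases (focus-focus block, or hyperbolic block with disconnected $T$-stabilizer), Corollary \ref{cor:tricho} identifies $p$ as ephemeral, so Proposition \ref{ephemeral} applies; the same proposition also covers the genuinely degenerate ephemeral case, producing a chart in which $\overline{g}\circ\Psi_p^{-1}(x,y)=y$, so $\Psi_p([p])$ is a regular point of this local model.

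With these charts assembled, Proposition \ref{connected} furnishes the desired smooth oriented surface structure on each multi-point component of $\Phi^{-1}(\beta)/T$ and guarantees that $\overline{g}$ is Morse. The correspondences (i)--(iii) follow by combining the list of local models above with the ``moreover'' clause of Proposition \ref{connected}: the index of $\overline{g}$ at $[p]$ equals the index of the quadratic or linear local model attached to $p$, so index $1$ corresponds to the non-degenerate hyperbolic-connected case, index $0$ or $2$ to the purely elliptic critical points of $g$ modulo $\Phi$, and regular points of $\overline{g}$ correspond either to ephemeral points or, via Lemma \ref{greg2}, to regular points of $g$ modulo $\Phi$.

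The main obstacle is bookkeeping rather than new mathematics: one must verify that Corollary \ref{cor:tricho} together with the ephemeral assumption exhausts all tall singular points appearing in a multi-point component, and that the ``exactly if'' direction of each correspondence is obtained by inverting the case analysis (every non-regular $[p]$ of $\overline{g}$ must come from one of the three quadratic local models, since the ephemeral and regular branches contribute only regular points). Since the substantive analytic input has already been packaged into Propositions \ref{gcrit}, \ref{nondegenerate}, and \ref{ephemeral}, the proof reduces to a careful case-by-case assembly followed by an invocation of Proposition \ref{connected}.
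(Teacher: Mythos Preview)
Your proposal is correct and follows essentially the same approach as the paper: reduce to tall points in a multi-point component, observe that every critical point of $g$ modulo $\Phi$ there is singular and hence (by hypothesis) non-degenerate or ephemeral, invoke the trichotomy of Corollary~\ref{cor:tricho}, feed the resulting local models from Propositions~\ref{gcrit}, \ref{nondegenerate}, and \ref{ephemeral} into Proposition~\ref{connected}, and read off the index correspondences from its ``moreover'' clause together with Lemma~\ref{greg2}. The paper's proof is terser but structurally identical.
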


\begin{proof}

 We may assume that $\beta = 0$. For simplicity, assume that $\MmodT$ is connected and contains more than one point.    Then, by definition, every point in $\Phi^{-1}(0)$ is tall.  If $p \in \Phi^{-1}(0)$ is a critical point of $g$ modulo $\Phi$,
then $p$ is also a singular point of $\left(M,\omega, \ft \times \R, f\right)$. So, by assumption, $p$ is either non-degenerate or ephemeral. Hence, by Corollary \ref{cor:tricho}, $p$ either has purely elliptic type, is non-degenerate with a hyperbolic block and connected $T$-stabilizer, or is ephemeral. Therefore, by Proposition \ref{connected}, the claim follows  from Propositions  \ref{gcrit}, \ref{nondegenerate} and  \ref{ephemeral}.
\end{proof}

We also  need the following well-known result, which is proved in \cite{At,GS2,Sja, LMTW}.

\mute{Theorem~\ref{thm:connected} is used in Theorems \ref{thm:too_bad} and \ref{thm:main}.}
\begin{Theorem}\label{thm:connected}
Let $T$ act on a connected symplectic manifold  $(M,\omega)$ with  moment map $\Phi \colon M \to \ft^*$.
If $\Phi$ is proper, then the fibers of $\Phi$ are connected and $\Phi$ is open as a map to $\Phi(M)$.
\end{Theorem}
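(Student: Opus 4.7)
The plan is to proceed by induction on $\dim T$, handling the base case $T = S^1$ by Morse-Bott theory applied to $\Phi \colon M \to \R$ and handling openness separately via the Marle-Guillemin-Sternberg local normal form.

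For the base case, I would exploit the fact that $\Phi$ is an $S^1$-invariant function whose critical set is precisely the fixed point set $M^{S^1}$. By the equivariant Darboux theorem, near each connected component of $M^{S^1}$ the function $\Phi$ is Morse-Bott, and in suitable symplectic coordinates takes the form $\sum_i \tfrac{\eta_i}{2}(x_i^2 + y_i^2) + \mathrm{const}$, so every critical submanifold has even index and even coindex. Properness of $\Phi$ together with the standard handle-attaching argument of Morse-Bott theory then forces each sublevel set $\Phi^{-1}((-\infty,c])$ to be connected: as one crosses a critical value one attaches handles of index at least $2$ and coindex at least $2$, which can neither create nor destroy connected components of the connected manifold $M$. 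The symmetric statement for superlevel sets follows by replacing $\Phi$ with $-\Phi$, and connectedness of the fiber $\Phi^{-1}(c)$ is then immediate since it is the intersection of two connected sets whose union is the connected manifold $M$.

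For the inductive step, assuming the result for all tori of dimension strictly less than $\dim T$, I would choose a subcircle $S \subseteq T$ whose induced moment map $\Phi_S$ remains proper and combine the base case (applied to $\Phi_S$) with the induction hypothesis applied to the residual $(T/S)$-action on the fibers of $\Phi_S$. The technical subtlety is that the $(T/S)$-action descends cleanly only to the regular strata; over singular values of $\Phi_S$ the reduced space is only a stratified symplectic space. Here I would either invoke Sjamaar-Lerman singular reduction to transport connectedness through the quotient, or, following the original Atiyah argument, perform the induction directly on $M$ via the Morse-Bott stratification associated to $\Phi_S$, using the parity properties of the slice representations along its critical submanifolds.

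For openness, I would apply the Marle-Guillemin-Sternberg local normal form directly. Near any point $p$ with stabilizer $H$, the moment map in the local model $Y = T \times_H \fh^\circ \times \C^{h+1}$ is $[t,\alpha,z] \mapsto \Phi(p) + \alpha + \Phi_H(z)$, whose image over a small neighborhood of $[1,0,0]$ is an open neighborhood of $\Phi(p)$ inside $\Phi(p) + \fh^\circ + \mathrm{cone}(\eta_0,\dots,\eta_h)$. Since the latter set coincides locally with $\Phi(M)$ near $\Phi(p)$, this yields openness as a map to $\Phi(M)$. The main obstacle I anticipate is controlling the inductive step in the proper non-compact setting, in particular ensuring that the residual moment map on each fiber inherits properness; this is exactly the point at which the classical compact arguments of Atiyah and Guillemin-Sternberg must be upgraded, in the spirit of Sjamaar and Lerman-Meinrenken-Tolman-Woodward.
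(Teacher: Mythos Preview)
The paper does not prove this theorem; it simply records it as a well-known result and cites \cite{At,GS2,Sja,LMTW}. So there is no proof in the paper to compare against, and your outline is essentially the classical approach from those references: Morse--Bott theory for the $S^1$ case, induction on $\dim T$, and the local normal form plus the local-to-global principle for openness.

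That said, your base-case argument has a genuine gap. You write that connectedness of the fiber $\Phi^{-1}(c)$ ``is then immediate since it is the intersection of two connected sets whose union is the connected manifold $M$.'' This implication is false in general: on the $2$-torus with the standard height function, the sublevel and superlevel sets through a value between the two saddles are each connected, their union is the torus, yet their intersection is two disjoint circles. The correct argument (as in Atiyah) tracks how the \emph{level set} changes across a critical value: a critical submanifold of index $k$ and coindex $n-k$ induces a surgery of type $(k-1,n-k-1)$ on the level set, and when both $k\ge 2$ and $n-k\ge 2$ this surgery does not alter $\pi_0$. Since the minimum level is a connected fixed component, all nonempty fibers stay connected.

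Your openness argument also needs one more ingredient. Knowing the image of a local model is $\Phi(p)+\fh^\circ+\mathrm{cone}(\eta_0,\dots,\eta_h)$ does not by itself give openness onto $\Phi(M)$: you must know that this local cone \emph{is} $\Phi(M)$ near $\Phi(p)$, and that identification is exactly the local-to-global step that uses fiber connectedness (this is the content of Sjamaar's local convexity or the argument in \cite{LMTW}). So the two parts are not independent; openness should be deduced after, and from, fiber connectedness.
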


 Finally, we need the following result, which characterizes those tall local models with a defining monomial of degree greater than two. 

\begin{Lemma} \label{pureveryexceptional}
Let $Y = T \times_H \fh^\circ \times \C^{h+1}$ be a tall local model,  where $H$ acts on $\C^{h+1}$ with weights $\eta_0,\dots,\eta_h$. Then the degree $N$ of the defining monomial of $Y$ is greater than two exactly if:
\begin{enumerate}[label=(\arabic*),ref=(\arabic*),leftmargin=*]
    \item \label{item:one-weight-zero} If  $\eta_i = 0$ for some $i$, then $H$ has more than two  components; and 
    \item \label{item:sum-two-weights} If  $\eta_i + \eta_j = 0$ for some $i, j$, then $H$ is not connected.
\end{enumerate}
\end{Lemma}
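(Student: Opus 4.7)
The plan is to translate everything into a statement about the nonnegative integer vector $\xi = (\xi_0, \ldots, \xi_h) \in \Z_{\geq 0}^{h+1}$ supplied by Lemma~\ref{lemma:defn_poly}, since $N = \sum_j \xi_j$ and both conditions \ref{item:one-weight-zero} and \ref{item:sum-two-weights} can be read off from $\xi$ directly. A short case analysis on the small values of $N$ will then conclude the argument.

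The plan rests on two elementary observations about the short exact sequence~\eqref{eq:3}. First, at the Lie algebra level $\rho$ identifies $\fh$ with the hyperplane $\{a \in \R^{h+1} : \sum_j \xi_j a_j = 0\}$, and $\eta_i \in \fh^*$ is the restriction of the $i$-th coordinate functional on $\R^{h+1}$. Hence $\eta_i = 0$ iff this functional vanishes on the hyperplane, equivalently, iff $\xi$ is a positive scalar multiple of the standard basis vector $e_i$, which forces $\xi = \xi_i e_i$; and for $i \neq j$ the same argument gives $\eta_i + \eta_j = 0$ iff $\xi = c(e_i + e_j)$ for some $c \geq 1$. Second, the long exact homotopy sequence of~\eqref{eq:3} yields $\pi_0(H) \simeq \Z / \gcd(\xi_0, \ldots, \xi_h) \Z$, so the number of components of $H$ equals $\gcd(\xi_0, \ldots, \xi_h)$.

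Given these two facts, both directions follow by enumeration. Since $\xi \neq 0$ (as~\eqref{eq:3} has $S^1$ as quotient), the only $\xi$ with $N \leq 2$ are $e_i$, $2 e_i$, and $e_i + e_j$ with $i \neq j$: in the first two cases $\eta_i = 0$ and $|\pi_0(H)| \in \{1, 2\}$, so \ref{item:one-weight-zero} fails; in the third case $\eta_i + \eta_j = 0$ and $H$ is connected, so \ref{item:sum-two-weights} fails. Conversely, a failure of \ref{item:one-weight-zero} produces $i$ with $\eta_i = 0$ and $|\pi_0(H)| \leq 2$, which by the two observations forces $\xi = \xi_i e_i$ with $\xi_i \leq 2$ and hence $N \leq 2$; a failure of \ref{item:sum-two-weights} with $i \neq j$ likewise forces $\xi = e_i + e_j$ and $N = 2$ (the case $i = j$ collapses to the previous one, since $\eta_i + \eta_i = 0 \Leftrightarrow \eta_i = 0$). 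The only step requiring even mild care will be the $\gcd$ computation of $|\pi_0(H)|$; everything else is linear algebra over $\R$ together with inspection.
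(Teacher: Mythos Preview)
Your proof is correct and follows essentially the same route as the paper's: both translate the conditions on the weights $\eta_i$ into the statement that $\xi$ is supported on one index (for $\eta_i=0$) or of the form $c(e_i+e_j)$ (for $\eta_i+\eta_j=0$ with $i\neq j$), identify the number of components of $H$ with the gcd of the $\xi_j$'s, and then finish by a short case analysis on $N\leq 2$. The only cosmetic difference is that you invoke the long exact homotopy sequence to get $|\pi_0(H)|=\gcd(\xi)$, while the paper simply reads off the component count from the explicit description of $\rho(H)$ as the kernel of $\lambda\mapsto\prod_j\lambda_j^{\xi_j}$.
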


\begin{proof} 
Let  $P(z) = \prod_{i=0}^h z_i^{\xi_i}$ be the defining monomial of $Y$.
 By Lemma~\ref{lemma:defn_poly}, the image of $H$ in $(S^1)^{h+1}$ is the kernel of the homomorphism $ (S^1)^{h+1} \to S^1$ given by $\lambda \mapsto \prod_{i} \lambda_i^{\xi_i}$; in particular,  $\sum_{i} \xi_i \eta_i  = 0$ is the unique linear relation among the $\eta_i$. 
The weight $\eta_i = 0$ for some $i$ exactly if  $P(z) = z_i^{\xi_i}$; in this case,
$P$ has degree $\xi_i$ and $H$ has $\xi_i$ components. 
Similarly, the sum $\eta_i + \eta_j = 0$ for some $i, j$ exactly if  
$P(z) = z_i^{\xi_i} z_j^{\xi_j}$ and $\xi_i = \xi_j$; in this case, $P$ has degree $N = 2 \xi_i$ and $H$ has $\xi_i$ components.
In any other case, $N = \sum_i \xi_i > 2$.
The claim follows immediately.
\end{proof}

\mute{Remark \ref{rmk:interior} is used in Remark \ref{rmk:thm-too-bad-interior}.}
\begin{Remark}\label{rmk:interior} If $Y$ is a tall local model with a surjective moment map, then  by \cite[Lemma 5.2]{KT1}  we can
choose the $\xi$ in Lemma~\ref{lemma:defn_poly} so that $\xi_i > 0 $ for all $i$.
Therefore, if the degree $N:=\sum_i \xi_i$ of the defining monomial is $1$ then $H$ is trivial, while if $N = 2$ then either $H \simeq \Z_2$, or $H \simeq S^1$ and the set of
isotropy weights are $\{-1,+1\}$.
\end{Remark}

We can now prove our main results, Theorems \ref{thm:too_bad} and \ref{thm:main}.

\begin{proof}[Proof of Theorem \ref{thm:too_bad}]
Let $(M, \omega, \ft \times\R, f:= (\Phi, g) )$ be an integrable system such that $(M,\omega, \Phi)$ is a complexity one $T$-space with a proper moment map. We may assume that the fiber $\Phi^{-1}(0)$ contains three distinct $T$-orbits $\mathfrak{O}_1,\mathfrak{O}_2, \mathfrak{O}_3$ that satisfy properties \eqref{item:0.8a} and \eqref{item:0.8b} in the statement of the theorem. Furthermore, assume that  every tall singular point in $\left(M, \omega, \ft \times\R, f\right)$ is non-degenerate.  Since $\Phi$ is proper, the reduced space  $\MmodT$ is compact and connected by Theorem~\ref{thm:connected}; this implies that every point in $\mathfrak O_i$ is tall. So by  the Marle-Guillemin-Sternberg local normal form theorem and Lemma~\ref{pureveryexceptional} every point in each
of the orbits $\mathfrak{O}_i$ has a tall local model whose defining monomial has degree $N > 2$. Therefore, each such point is a critical point of $g$ modulo $\Phi$ by Lemma \ref{greg2}, and  has purely elliptic type by Proposition \ref{nondegenerate}.
Hence, by Proposition \ref{prop:Morse} (or by \cite[Proposition 7.1]{ST1}), $\MmodT$ can be given the structure of a smooth closed, connected, oriented surface so that $\overline{g} \colon \MmodT \to \R$ is a Morse function; moreover, each $\mathfrak{O}_i$ corresponds to a critical point of $\overline{g}$ of index $0$ or $2$. By, for example, \cite[Lemma 7.2]{ST1}, every Morse function on a  closed, connected oriented manifold with connected fibers has a unique critical point of index or coindex 0. Hence, not all fibers of $\overline{g}$ are connected, and so not all fibers of $f$ are connected.
\end{proof}

\begin{proof}[Proof of Theorem \ref{thm:main}]
Let $(M,\omega, \ft \times \R, f:=(\Phi,g))$ be an integrable system such that $(M,\omega,\Phi)$ is a complexity one $T$-space with a proper moment map. Assume that each tall 
singular point in $\left(M,\omega, \ft \times \R, f\right)$ is either  non-degenerate or ephemeral. Fix $\beta \in \ft^*$; we may assume that $\beta = 0$.
Since $\Phi$ is proper, the reduced space  $\MmodT$ is compact and connected by Theorem~\ref{thm:connected}. 

If the reduced space $\MmodT$ is a single point, then it contains no tall orbits; moreover, it is simply connected and the fiber $f^{-1}(0,c)$ is connected for all $c \in \R$.

Otherwise, every point in $\MmodT$ is tall, and by Proposition~\ref{prop:Morse}, $\MmodT$ can be given the structure of  a smooth closed,  connected oriented surface  so that 
$\overline{g} \colon \MmodT \to \R$
is a Morse function; moreover, the point $[p] \in \MmodT$ associated to $p \in \Phi^{-1}(0)$ is a critical point of $\overline{g}$ of index $1$  exactly if $p$ is a non-degenerate singular point with a hyperbolic block and connected $T$-stabilizer.
Since $T$ is connected, the fiber $f^{-1}(0,c)$ is connected if and only if $\overline{g}^{-1}(c)$ is connected. 

Hence, the result follows from \cite[Lemma 7.3]{ST1}, which states that a Morse function on a closed, connected 2-dimensional manifold has no critical point of index one exactly if the manifold is simply connected and each fiber is connected.
\end{proof}

\begin{Remark}\label{rmk:more_general}
More generally, Theorems \ref{thm:too_bad} and \ref{thm:main} both hold if $\Phi$ is proper as a map to an open convex subset of $\ft^*$, because the conclusions of Theorem~\ref{thm:connected} still hold in this case by \cite[Theorem 4.3]{LMTW}. 
\end{Remark}

Finally, we prove that, generically, the integrable systems we consider have connected fibers exactly if they have no tall non-degenerate singular points with a hyperbolic block and connected $T$-stabilizer.
Note that, by \cite[Lemma 5.7 and Corollary 9.7]{KT1}, all the reduced spaces of a complexity one $T$-space with a proper moment map have the same genus.
Hence, Theorem~\ref{thm:main} and Proposition~\ref{prop:converse} together show that \cite[Theorem 1.11]{ST1} can be extended to allow tall ephemeral points, instead of only allowing tall non-degenerate  singular points.

\begin{Proposition}\label{prop:converse}
    Let $(M,\omega, \ft \times \R, f:=(\Phi,g))$ be an integrable system such that $(M,\omega,\Phi)$ is a complexity one $T$-space with a proper moment map. Assume that each tall 
singular point in $\left(M,\omega, \ft \times \R, f\right)$ is either non-degenerate or ephemeral. If each fiber of $f$ contains at most one $T$-orbit of  tall non-degenerate singular points with a hyperbolic block, then $f^{-1}(\beta,c)$ is connected for all $(\beta,c) \in \ft^* \times \R$ if and only if $M$ has no  tall non-degenerate singular points with a hyperbolic block and connected $T$-stabilizer.
\end{Proposition}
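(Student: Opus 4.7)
The plan for the $(\Leftarrow)$ direction is to invoke Theorem \ref{thm:main} directly: the absence of tall non-degenerate singular points with a hyperbolic block and connected $T$-stabilizer is precisely condition \eqref{item:0.13a} of that theorem, which is equivalent to \eqref{item:0.13b} and in particular yields that every fiber of $f$ is connected. For the $(\Rightarrow)$ direction, I argue by contrapositive: assume some $p \in M$ is a tall non-degenerate singular point with a hyperbolic block and connected $T$-stabilizer, and I exhibit a disconnected fiber of $f$.

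Set $\beta := \Phi(p)$ and $\Sigma := \Phi^{-1}(\beta)/T$. Since $\Phi$ is proper, Theorem \ref{thm:connected} shows that $\Sigma$ is compact and connected; since $p$ is tall, $\Sigma$ contains more than one point. Proposition \ref{prop:Morse} then endows $\Sigma$ with the structure of a smooth closed connected oriented surface on which $\overline{g} \colon \Sigma \to \R$ is a Morse function, and $[p]$ is a critical point of $\overline{g}$ of index $1$. Because $T$ is connected, any disconnected fiber of $\overline{g}$ on $\Sigma$ lifts to a disconnected fiber of $f$, so it suffices to produce one.

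Suppose for contradiction that every fiber of $\overline{g}$ on $\Sigma$ is connected. By \cite[Lemma 7.3]{ST1} the existence of the index-$1$ critical point $[p]$ then forces $\Sigma$ not to be simply connected, so $g(\Sigma) \geq 1$. By \cite[Lemma 7.2]{ST1}, $\overline{g}$ has a unique minimum and a unique maximum, so $c_0 = c_2 = 1$; the Euler characteristic identity $c_0 - c_1 + c_2 = 2 - 2 g(\Sigma)$ then gives $c_1 = 2 g(\Sigma) \geq 2$. The hypothesis that each fiber of $f$ contains at most one $T$-orbit of tall non-degenerate singular points with a hyperbolic block, combined with Proposition \ref{prop:Morse}, implies that the index-$1$ critical points of $\overline{g}$ occur at pairwise distinct critical values.

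Let $c_*$ be the smallest such critical value. For sufficiently small $\epsilon > 0$, the sublevel set $\overline{g}^{-1}((-\infty, c_* - \epsilon])$ is diffeomorphic to a disk, since only the unique minimum of $\overline{g}$ has been crossed. By the uniqueness just noted, a single $1$-handle is attached to the boundary of this disk at level $c_*$, producing an oriented connected surface with non-empty boundary and Euler characteristic zero. A brief computation using $\chi = 2 - 2g - b$ shows the only such surface is a cylinder (genus zero, two boundary circles). Hence the fiber $\overline{g}^{-1}(c_* + \epsilon)$ has two connected components, contradicting the assumption. The principal obstacle will be articulating this final handle-attachment step cleanly; notably, without the uniqueness hypothesis two saddles at the same critical value could together yield a torus with one boundary circle rather than a cylinder, so the assumption is genuinely needed here.
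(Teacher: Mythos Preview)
Your proof is correct. Both directions are handled properly, and the Morse-theoretic setup via Theorem~\ref{thm:connected} and Proposition~\ref{prop:Morse} is exactly what the paper does.

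The paper's argument for the $(\Rightarrow)$ direction is slightly different from yours. It assumes all fibers of $f$ are connected, deduces that all fibers of $\overline g$ are connected, and then invokes \cite[Lemma~7.4]{ST1}, which says that a Morse function on a closed connected oriented surface with connected fibers has an \emph{even} number of index-$1$ critical points in each fiber. Combined with the hypothesis that each fiber contains at most one such point, this immediately gives zero index-$1$ critical points, hence no tall non-degenerate singular points with a hyperbolic block and connected $T$-stabilizer.

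Your route replaces the citation of \cite[Lemma~7.4]{ST1} by a direct handle-attachment argument: unique minimum (from \cite[Lemma~7.2]{ST1}) gives a disk below the lowest saddle, and a single $1$-handle attached in an oriented surface yields a cylinder, hence a disconnected level set. This is essentially a proof of the special case of \cite[Lemma~7.4]{ST1} you need. Two minor remarks: your detour through \cite[Lemma~7.3]{ST1} and the computation $c_1 = 2g(\Sigma)\ge 2$ are not actually used in the contradiction and could be omitted; all you need is the unique minimum and the existence of $[p]$ as an index-$1$ critical point at an isolated critical value. Also, it would be worth making explicit that orientability of $\Sigma$ is what rules out the M\"obius-band outcome of the handle attachment.
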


\begin{proof}
If $M$ has no tall non-degenerate singular points with a hyperbolic block and connected $T$-stabilizer, then by Theorem \ref{thm:main} the fiber $f^{-1}(\beta,c)$ is connected for each $(\beta,c) \in \ft^* \times \R$. Hence, we may assume that each fiber of $f$ is connected.

Fix $\beta \in \ft^*$  such that $\Phi^{-1}(\beta)/T$ contains more than one point; we may assume that $\beta = 0$. Since $\Phi$ is proper, the reduced space  $\MmodT$ is compact and connected by Theorem~\ref{thm:connected}.  By  Proposition~\ref{prop:Morse}, $\MmodT$ can be given
the structure of a smooth closed, connected oriented surface so that $\overline{g} \colon \MmodT \to \R$ is a Morse function; moreover, the point $[p] \in \MmodT$ associated to $p \in \Phi^{-1}(0)$ is a critical point of $\overline{g}$ of index $1$  exactly if $p$ is a non-degenerate singular point with a hyperbolic block and connected $T$-stabilizer.  
 Since $T$ and the fiber $f^{-1}(0,c)$ are connected,  the fiber $\overline{g}^{-1}(c)$ is connected for each $c \in \R$. Hence the result follows from \cite[Lemma 7.4]{ST1}, which states that a Morse function on a closed, connected oriented surface with connected fibers has the property that each fiber contains an even number of critical points of index 1.
\end{proof}

\section{An intrinsic characterization of ephemeral points}\label{section:jets}

\mute{03/05/24: Warning: In general, there may be polynomials vanishing on $V_{\Phi}$
  that do not lie in $I_{\Phi}$. Consider, for instance, the case of
  $M = \C^2$ and $\Phi = |z_1|^2 + |z_2|^2$. Then $V_{\Phi} = \{0\}$
  but there are polynomials that vanish on $V_{\Phi}$ that do not lie
  in $I_{\Phi} = \langle |z_1|^2 + |z_2|^2 \rangle$. However, this does not happen in our case. At one point we considered mentioning something about this issue/the real nullstellensatz.
  }

\mute{In this section we use: Definition~\ref{eph}.}

The definition of an ephemeral point in the Introduction depends on choosing a symplectic identification of a $T$-invariant neighborhood of the point with an open subset of the associated local model.
In this section, we give an {\em intrinsic} characterization of an ephemeral point, i.e., one that does not depend on any such choice; in particular, we show that if  a point is ephemeral  then conditions \ref{item:a} and \ref{item:b} of Definition~\ref{eph} hold for {\em every} such identification  (see Proposition \ref{cor:criterion} below). Finally, we provide tools to check whether a point is ephemeral using any coordinate chart in Lemma \ref{lemma:calculate}.

We start by introducing the intrinsic notions that we need. Fix $\ell \in \N := \{n \in \Z \mid n \geq 0\}$. Suppose that $g \in C^{\infty}(\R^n)$ {\bf vanishes below order $\boldsymbol \ell$ at $\boldsymbol 0$}, i.e., all partial derivatives of $g$ of order strictly less than $\ell$ vanish at $0$.
Then  $T^\ell_0 g$, the  degree $\ell$ Taylor polynomial  of $g$  at $0$, is a homogeneous polynomial of degree $\ell$ on $T_0 \R^n$. Moreover, if $\psi\colon \R^m \to \R^n$ is  smooth and $\psi(0) = 0$, then by the chain rule
\begin{enumerate}
    \item \label{item:natural} $\psi^*g \in C^{\infty}(\R^m)$ vanishes below order $\ell$ at $0$, and
    \item \label{item:natural_poly} $T^{\ell}_0 (\psi^*g) = (D_0\psi)^* (T^{\ell}_0 g)$.
\end{enumerate}

Next fix a point $p$ in a manifold $M$. Given a coordinate chart $\varphi$ on $M$ centered at $p$, we say that $g$ {\bf vanishes below order $\boldsymbol{\ell}$ at $\boldsymbol p$} if $g \circ \varphi^{-1}$ vanishes below order $\ell$ at $0$. By property \eqref{item:natural} above, this does not depend on the choice of  $\varphi$. Let $Q^{\ell}_p \subseteq C^\infty(M)$ denote  the ideal of functions that vanish below order $\ell$ at $p$, and let $S^{\ell}(T_p M)$ be the vector space of homogeneous polynomials of degree $\ell$ on $T_p M$. Given $g \in Q^{\ell}_p$, the {\bf jet of $\boldsymbol{g}$ of order $\boldsymbol{\ell}$  at $\boldsymbol{p}$} is
\begin{equation}
\label{eqn:defn_jet}
  j^{\ell}_p g:= (D_p \varphi)^*(T^{\ell}_0(g \circ \varphi^{-1})) \in S^\ell(T_pM),  
\end{equation}
where $\varphi$ is a coordinate chart centered at $p$.
By property \eqref{item:natural_poly} above, this polynomial does not depend on the choice of $\varphi$. Moreover, by properties \eqref{item:natural} and \eqref{item:natural_poly}, these notions are natural; given a smooth map $\psi \colon N \to M$ 
with $\psi(q) = p$: 
\begin{equation} \label{eq:natural}
\text{If }g \in Q^{\ell}_p  \text{ then }\psi^*g \in Q^{\ell}_q \text{ and }
 j^{\ell}_q(\psi^*g) = (D_q\psi)^*(j^\ell_pg).
 \end{equation}

\begin{Remark} 
The map  $j^{\ell}_p\colon Q^{\ell}_p \to S^\ell(T_pM)$ is a surjective homomorphism of vector spaces with kernel 
$Q^{\ell +1}_p$.
Under the resulting identification of $Q^\ell_p/Q^{\ell+1}_p$ with $S^\ell(T_pM)$, the homomorphism $j_p^\ell$ is the restriction of the standard jet map $C^\infty(M) \to C^\infty(M)/Q^{\ell+1}_p$ to the subspace $Q^{\ell}_p \subseteq C^\infty(M)$. This motivates our use of the term ``jet''.
\end{Remark}

Let $\mathcal{I}$ be a subspace of $C^{\infty}(M)$. We say that $g \in C^{\infty}(M)$ {\bf vanishes below order $\boldsymbol{\ell}$ at $\boldsymbol{p}$ modulo} $\bm{ \mathcal{I}}$ if $g \in \mathcal{I} + Q_p^{\ell}$, i.e., $g - h \in Q^{\ell}_p$ for some $h \in \mathcal{I}$. Moreover, the {\bf homogenization of $\bm{\mathcal I}$ at $\boldsymbol{p}$} is a graded subspace of the graded polynomial algebra $S(T_pM) := \oplus_{\ell \in \N} S^\ell(T_pM)$; it is the image of the map $\bigoplus_{\ell \in \N} \mathcal I \cap Q^{\ell}_p  \to S(T_p M) $  that sends $g \in \mathcal I \cap Q^{\ell}_p$ to $j^{\ell}_p g$. The {\bf zero locus of the  homogenization of $\bm{\mathcal{I}}$ at $\boldsymbol{p}$} is the set of points $V_{\mathcal{I},p}$ in $T_p M$ where every polynomial in  the homogenization of $\mathcal{I}$ at $p$ vanishes. By \eqref{eq:natural},
given a diffeomorphism $\varphi \colon N \to M$  with $\varphi(q) = p$:
\begin{equation} \label{eq:natural2}
D_q \varphi (V_{\varphi^*\mathcal I,q}) = V_{\mathcal I,p}.
\end{equation}

\begin{Example}
Let $\mathcal{I}$ be the ideal in $C^{\infty}(M)$ generated by a function $f \in Q_p^\ell \smallsetminus Q_p^{\ell + 1}$. It is straightforward to check that the homogenization of $\mathcal{I}$ at $p$ is the ideal in $S(T_p M)$ generated by $j_p^\ell f$; its zero locus $V_{\mathcal{I},p} \subseteq T_pM$ is the zero set of $j_p^\ell f$.
\end{Example}

Now assume that a torus $T$ acts on $M$.  Then the stabilizer $H$ of $p$ acts linearly on the smooth slice  $W := T_p M/T_p \mathfrak O$,
where $\mathfrak{O}$ is the $T$-orbit through $p$.
By the smooth slice theorem for proper actions (see, e.g., \cite[Theorem 2.4.1]{DK}), we may locally identify $M$  with $Y = T \times_H W$ and  $p$ with $[1,0]$.
Define  $r \colon W \to Y$ by $r(v)= [1,w]$, $\chi \colon \fh^\perp \times W \to Y$  by $\chi(\eta,w)= [\exp(\eta), w]$, and $\pi \colon \fh^\perp \times W \to W$ by $\pi(\eta,w) = w$. Since $D_{(0,0)} \chi$ is
invertible, there is a coordinate chart $\varphi$  centered at $p$  with inverse $\chi$.
Using $\varphi$ to identify $T_p Y$ with $\fh^\perp \times W$ gives $T_p \mathfrak O =\fh^\perp \times \{0\}$ and $D_0 r(w) = (0,w)$ for all $w \in W \simeq T_0 W$.

If $g \in C^{\infty}(Y)$ is $T$-invariant, then $\chi^* g$ is constant along the fibers of $\pi$. Thus $\chi^* g = \pi^* (r^* g)$, and
so  $T^{\ell}_{(0,0)} (\chi^* g) = \pi^* (T^\ell_0 (r^* g))$ for all $\ell \in \N$.
 Hence, 
\begin{equation}
\label{eq:vanish-r}
    g \in Q^{\ell}_p \text{ exactly if } r^*g \in Q^{\ell}_0. 
\end{equation} 
Moreover, in this case
$j_p^\ell g = \pi^* (j^\ell_0 (r^* g))$, and so   $j_p^\ell g$ is invariant  under addition
by elements of $T_p \mathfrak O$.
Additionally, by \eqref{eq:natural} the jet $j^{\ell}_p g$ is invariant under the slice representation of $H$.

If $\mathcal{I}$ is a subspace of the algebra of $T$-invariant  functions in $C^{\infty}(Y)$, then
its pullback $r^* \mathcal I$ is a subspace of the algebra of $H$-invariant functions in $C^{\infty}(V)$.
By the preceding paragraph, $V_{\mathcal{I},p}$ is closed under addition by elements in $T_p \mathfrak O$. Indeed, $V_{\mathcal I,p} = \fh^\perp \times V_{r^* \mathcal I,0}$ and so 
\begin{equation} 
\label{eq:isomorphism}
V_{\mathcal I,p} = D_0 r( V_{r^*\mathcal{I},0}) + T_p \mathfrak O.
\end{equation}
Moreover, $V_{\mathcal{I},p}$ is invariant under the isotropy representation of $H$.

We are particularly interested in the case that 
$T$ acts on a symplectic manifold $(M,\omega)$ with moment map $\Phi \colon M \to \ft^*$, and
\begin{equation}
    \label{eq:I-Phi}
    \mathcal{I}_{\Phi} = \{ \langle \epsilon, \Phi \rangle \mid  \epsilon \in
C^\infty( M , \ft) \text{ is } T\text{-invariant}\}.
\end{equation}
In this case, to simplify notation we use the shorthand ``modulo $\Phi$" to mean ``modulo $\mathcal{I}_{\Phi}$" and denote the zero locus of the homogenization of $\mathcal{I}_{\Phi}$ at $p$ by $V_{\Phi,p}$.   

    The above discussion holds {\em mutatis mutandis} if we replace the torus $T$ with a compact abelian Lie group.  This is particularly relevant when studying a local model $Y = T \times_H \fh^\circ \times \C^{h+1}$ with moment map $\Phi_Y \colon Y \to \ft^*$. In this case, define 
    \begin{equation*}
    R \colon \C^{h+1} \hookrightarrow T \times_H \fh^\circ \times \C^{h+1} \quad \text{ by }\quad z  \mapsto [1,0,z].
    \end{equation*}
If $\Phi_H \colon \C^{h+1} \to \fh^*$ is the homogeneous moment map for the $H$-action on $\C^{h+1}$, then 
    $$R^*\Phi_Y = \Phi_H \quad \text{and} \quad R^*\mathcal{I}_{\Phi_Y} = \mathcal{I}_{\Phi_H}.$$
Our first important technical lemma concerns  this case.

\mute{Lemma \ref{lemma:hgs-r} is used in \ref{lemma:calculate} (part \ref{item:quot-van}), in \ref{lemma:vanishing_equivalence} (parts \ref{item:taylor-in-ideal} and \ref{item:R-van}), and in \ref{cor:vanishing_set} (parts \ref{item:taylor-in-ideal} and \ref{item:quot-van}).}
\begin{Lemma}\label{lemma:hgs-r}
    Let $Y = T \times_H \fh^\circ \times \C^{h+1}$ be a local model with moment map $\Phi_Y \colon Y \to \ft^*$, and
    let $\Phi_H \colon \C^{h+1} \to \fh^*$ be the homogeneous moment map for the $H$-action on $\C^{h+1}$. If $p := [1,0,0]$ and $\ell \in \mathbb{N}$, then:
   \begin{enumerate}
       \item \label{item:taylor-in-ideal} For any $h \in \mathcal{I}_{\Phi_Y}$, the Taylor polynomial $T_{0}^{\ell} (R^*h)$ vanishes
        on $\Phi_H^{-1}(0)$;
        \item \label{item:quot-van} $ V_{\Phi_Y,p} = D_{0}R(  \Phi^{-1}_H(0)) + T_p \mathfrak O$,       where $\mathfrak O$ is the $T$-orbit through $p$; and
        \item \label{item:R-van} A $T$-invariant function $g \colon Y \to \R$ vanishes  below order $\ell$ at $p$ 
        modulo $\Phi_Y$ exactly if $R^* g$ vanishes below order $\ell$ at $0$ modulo $\Phi_H$.
            \end{enumerate}
\end{Lemma}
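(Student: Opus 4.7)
The plan is to handle the three parts in order, with Part (1) as the algebraic engine that also drives (2) and (3). The common thread I would use is the identity $R^*\Phi_Y = \Phi_H$ together with the observation that $\Phi_H$ takes values in $\fh^*\subset \ft^*$: in any pairing $\langle\epsilon,\Phi_H\rangle$, only the component of $\epsilon$ in $\fh\subset\ft$ contributes, because $\fh^\perp$ pairs trivially with $\fh^*$ under the inner-product splitting $\ft^* = \fh^\circ\oplus\fh^*$.

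For Part (1), I would write $h = \langle\epsilon,\Phi_Y\rangle$ with $\epsilon\colon Y\to\ft$ a $T$-invariant smooth function and compute $R^*h = \langle\epsilon_\fh\circ R,\Phi_H\rangle = \sum_a c^a\phi_a$ in a basis $\{e_a\}$ of $\fh$, where each $\phi_a$ is the quadratic polynomial $\frac{1}{2}\sum_i\langle e_a,\eta_i\rangle|z_i|^2$ giving a component of $\Phi_H$ and each $c^a$ is smooth. Since each $\phi_a$ is already a polynomial of degree $2$, the product formula for Taylor polynomials gives $T_0^\ell(c^a\phi_a) = T_0^{\ell-2}(c^a)\cdot\phi_a$, so every term of $T_0^\ell(R^*h)$ is divisible by some $\phi_a$ and thus vanishes on $\Phi_H^{-1}(0)$.

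For Part (2), the inclusion $\supseteq$ will follow from Part (1) together with the naturality identity $(D_0R)^*(j_p^\ell h) = j_0^\ell(R^*h) = T_0^\ell(R^*h)$ (the last equality using $R^*h\in Q_0^\ell$ whenever $h\in Q_p^\ell$) and the $T_p\mathfrak{O}$-translation invariance of $V_{\Phi_Y,p}$ established in the discussion preceding the lemma. For $\subseteq$, I would decompose $T_pY\simeq\fh^\perp\oplus\fh^\circ\oplus\C^{h+1}$, identifying $T_p\mathfrak{O}$ with the first factor and $D_0R(\C^{h+1})$ with the last, and consider a vector $(\xi,\alpha,v)$ outside $\fh^\perp\oplus\{0\}\oplus\Phi_H^{-1}(0)$. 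If $\alpha\neq 0$, pick $X'\in\fh^\perp$ with $\langle X',\alpha\rangle\neq 0$ and use the $1$-jet of $\langle X',\Phi_Y\rangle\in\mathcal{I}_{\Phi_Y}\cap Q_p^1$. If $\alpha=0$ but $v\notin\Phi_H^{-1}(0)$, pick $Y'\in\fh$ with $\langle Y',\Phi_H(v)\rangle\neq 0$; since $Y'$ annihilates $\fh^\circ$, in slice coordinates $\langle Y',\Phi_Y\rangle$ reduces to the quadratic $\langle Y',\Phi_H(\cdot)\rangle$, so it lies in $\mathcal{I}_{\Phi_Y}\cap Q_p^2$ and its $2$-jet at $(\xi,0,v)$ equals $\langle Y',\Phi_H(v)\rangle\neq 0$.

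For Part (3), the direction $(\Rightarrow)$ is an immediate consequence of Part (1)'s calculation, which actually produces the stronger identity $R^*h = \langle\epsilon_\fh\circ R,\Phi_H\rangle\in\mathcal{I}_{\Phi_H}$ for $h\in\mathcal{I}_{\Phi_Y}$, while $R^*q\in Q_0^\ell$ for $q\in Q_p^\ell$ by naturality of vanishing orders. For $(\Leftarrow)$, I would write $R^*g = \langle\tilde\epsilon,\Phi_H\rangle + \tilde q$ with $\tilde\epsilon\colon\C^{h+1}\to\fh$ an $H$-invariant smooth function and $\tilde q\in Q_0^\ell$, then extend $\tilde\epsilon$ to a $T$-invariant $\epsilon\colon Y\to\fh\subset\ft$ by $\epsilon([t,\alpha,z]):=\tilde\epsilon(z)$, well-defined because $H$ acts trivially on $\fh^\circ$ and $\tilde\epsilon$ is $H$-invariant. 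Then $h:=\langle\epsilon,\Phi_Y\rangle\in\mathcal{I}_{\Phi_Y}$ satisfies $R^*h = \langle\tilde\epsilon,\Phi_H\rangle$, so $R^*(g-h) = \tilde q\in Q_0^\ell$, and since $g-h$ is $T$-invariant, \eqref{eq:vanish-r} yields $g-h\in Q_p^\ell$. I do not expect a serious obstacle; the main thing requiring care is tracking the dual splittings $\ft = \fh\oplus\fh^\perp$ and $\ft^* = \fh^\circ\oplus\fh^*$, together with the fact that $\fh$ pairs with $\fh^*$ while $\fh^\perp$ pairs with $\fh^\circ$.
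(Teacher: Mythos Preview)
Your arguments for Parts~(1) and~(2) are correct and match the paper's approach in substance (you work directly with $R$ and $T_pY$, whereas the paper routes everything through the full slice map $r(\alpha,z)=[1,\alpha,z]$ on $\fh^\circ\times\C^{h+1}$ and then restricts via $\iota(z)=(0,z)$, but the content is identical).

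There is a genuine gap in your $(\Leftarrow)$ argument for Part~(3). You conclude $g-h\in Q_p^\ell$ from $R^*(g-h)\in Q_0^\ell$ by invoking \eqref{eq:vanish-r}, but that equation is stated for the \emph{full} slice inclusion $r\colon \fh^\circ\times\C^{h+1}\to Y$, not for $R\colon\C^{h+1}\to Y$. The implication ``$R^*f\in Q_0^\ell$ and $f$ is $T$-invariant $\Rightarrow f\in Q_p^\ell$'' is simply false: for any nonzero $X'\in\fh^\perp$ the function $f([t,\alpha,z])=\langle X',\alpha\rangle$ satisfies $R^*f\equiv 0$ yet $f\in Q_p^1\smallsetminus Q_p^2$. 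Concretely, your chosen $h=\langle\epsilon,\Phi_Y\rangle$ with $\epsilon$ valued in $\fh$ gives $r^*h(\alpha,z)=\langle\epsilon([1,\alpha,z]),\Phi_H(z)\rangle$, so $r^*(g-h)$ may still carry low-order monomials in the $\alpha$-variables that your construction never touches.

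The paper fixes precisely this: from $R^*(g-h)\in Q_0^\ell$ one gets $\iota^*T^{\ell-1}_{(0,0)}\big(r^*(g-h)\big)=0$, so the polynomial $T^{\ell-1}_{(0,0)}\big(r^*(g-h)\big)$ lies in the ideal generated by the $\fh^\circ$-coordinates, which are exactly $\{r^*\Phi_Y^\xi:\xi\in\fh^\perp\}$. Lifting the coefficients gives a further $f\in\mathcal{I}_{\Phi_Y}$ with $r^*f=T^{\ell-1}_{(0,0)}\big(r^*(g-h)\big)$; replacing $h$ by $h+f$ then yields $r^*(g-h)\in Q_{(0,0)}^\ell$, and now \eqref{eq:vanish-r} (applied to $r$, not $R$) legitimately gives $g-h\in Q_p^\ell$. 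In short, your extension handles only the $\fh$-part of $\epsilon$; you still need a second correction coming from the linear components $\Phi_Y^\xi$ with $\xi\in\fh^\perp$.
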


\begin{proof}
Define $r \colon \fh^{\circ} \times \C^{h+1} \to Y$  by $r(\alpha,z) = [1,\alpha,z]$ and $\iota \colon \C^{h+1} \to \fh^{\circ} \times \C^{h+1}$ by $\iota(z) = (0,z)$, Recall that $\Phi_Y([t,\alpha,z]) = \alpha + \Phi_H(z)$,  and so $r^*\Phi_Y^\xi: = r^*\langle \Phi_Y, \xi \rangle$ is the $H$-invariant polynomial on $\fh^\circ \times \C^{h+1}$
sending $(\alpha, z)$ to $\langle \alpha + \Phi_H(z), \xi \rangle$ for any $\xi \in \ft$. It is a homogeneous polynomial of degree $2$ for all $\xi \in \fh$ and of degree $1$ for all $\xi \in \fh^\perp$. Hence, there  exist a basis $\{\xi_i\}$ for $\ft$ and non-negative integers $\{d_i\}$ such that $r^* \Phi_Y^{\xi_i}$ is a homogeneous polynomial of degree $d_i$ for each $i$.


Given $h \in \mathcal{I}_{\Phi_Y}$, there exist  $T$-invariant functions
$\epsilon_i \colon Y \to \R$ such that $h = \sum_i \epsilon_i \Phi_Y^{\xi_i}$.
Therefore the Taylor polynomial $T^\ell_{(0,0)} (r^*h)$ is equal to $\sum_i(r^*\Phi_Y^{\xi_i}) \,
T_{(0,0)}^{\ell - d_i} (r^*\epsilon_i) \in r^*\mathcal{I}_{\Phi_Y} $,
where  $T^k_{(0,0)}  (r^*\epsilon_i) \equiv 0$ if $k < 0$.  Since $R = r \circ \iota$,
this implies  that $T^{\ell}_0 ( R^* h) =\iota^*(T^{\ell}_{(0,0)}(r^*h))  \in R^*\mathcal{I}_{\Phi_{Y}}  = \mathcal{I}_{\Phi_H}$, 
and so $T^{\ell}_0 ( R^* h)$ vanishes on $ \Phi_H^{-1}(0)$. This proves \eqref{item:taylor-in-ideal}.

By the preceding paragraph, every polynomial in the homogenization of $r^* \mathcal{I}_{\Phi_Y}$ at $(0,0)$ vanishes on $(r^*\Phi_Y)^{-1}(0) =  \{0\} \times \Phi_H^{-1}(0)$.
Hence,  $\{0\} \times \Phi_H^{-1}(0)$  is contained in  $V_{r^* \mathcal{I}_{\Phi_Y}}$, the zero locus of the homogenization of $r^*\mathcal{I}_{\Phi_Y}$ at $(0,0)$.
Conversely,  since $r^*\Phi_Y^{\xi_i} \in r^*\mathcal{I}_{\Phi_Y} \cap \, Q^{d_i}_{(0,0)}$ and $j^{d_i}_{(0,0)} (r^*\Phi_Y^{\xi_i}) =r^*\Phi_Y^{\xi_i}$, the polynomial $r^*\Phi_Y^{\xi_i}$ is in the homogenization of $r^*\mathcal I_{\Phi_Y}$ at $(0,0)$ for all $i$.  Hence,  
$ V_{r^*\mathcal{I}_{\Phi_Y}}\subseteq\{0\} \times \Phi_H^{-1}(0)$.
 Therefore,  by \eqref{eq:isomorphism}
 $$  V_{\Phi_Y,p} = D_{(0,0)}r(\{0\} \times \Phi^{-1}_H(0)) + T_p \mathfrak O = D_0 R(\Phi^{-1}_H(0)) + T_p \mathfrak O,$$
 and so \eqref{item:quot-van} holds.

 Finally, let $g \colon Y \to \R$ be $T$-invariant.  Assume  that there exists $h \in \mathcal I_{\Phi_Y}$ such that  $R^*(g -h) \in Q^\ell_0$.  Then $0 = T_0^{\ell-1} (R^* (g-h)) = \iota^* T_{(0,0)}^{\ell-1} (r^*(g-h))$,
and so the Taylor polynomial $T_{(0,0)}^{\ell-1} (r^*(g-h))$ lies in the ideal of $H$-invariant functions generated by $\{r^*\Phi^{\xi}_Y \mid \xi \in \fh^{\perp}\}$.  Hence, there exists $f \in \mathcal I_{\Phi_Y}$ such that $r^* f = T_{(0,0)}^{\ell-1} (r^*(g-h))$.
So by replacing $h$ by $h - f$, we may assume that $T_{(0,0)}^\ell (r^* (g - h)) = 0$ 
and so $r^* (g - h) \in Q^{\ell}_0$.   By  \eqref{eq:vanish-r}, this implies that $g - h \in Q^\ell_0$.
Therefore, \eqref{item:R-van} follows  from \eqref{eq:natural}.
\end{proof}

The proofs of parts \eqref{item:taylor-in-ideal} and \eqref{item:quot-van} of Lemma \ref{lemma:hgs-r} rely  heavily on homogeneity of the components of $r^*\Phi_Y$. As shown below, analogous statements fail without this homogeneity. In contrast, the proof of part \eqref{item:R-van} of Lemma  \ref{lemma:hgs-r} only relies on the fact that $r^* \mathcal{I}_{\Phi_Y}$ contains the ideal of $H$-invariant polynomials generated by $\{r^*\Phi^{\xi}_Y \mid \xi \in \fh^{\perp}\}$.

\begin{Example}\label{exm:homogeneous}
Define $\Phi \colon \R^2 \to \R$ by $\Phi(x,y) = x + y^2$. Then $T^1_0 \Phi = x$, which does not vanish on $\Phi^{-1}(0)$. Similarly, $V_{\Phi} = \{0\} \times \R$ is not equal to $\Phi^{-1}(0)$.
\end{Example}

 Next we show that we can determine if a $T$-invariant function on a tall local model $Y$ vanishes below a sufficiently small order modulo $\Phi_Y$ by computing the  reduced Taylor polynomial.

\mute{Lemma \ref{lemma:vanishing_equivalence} is used in Proposition \ref{cor:criterion}.}
\begin{Lemma}\label{lemma:vanishing_equivalence}
Let $Y= T \times_H \fh^{\circ} \times \C^{h+1}$ be a tall local model with moment map $\Phi_Y \colon Y \to \ft^*$  and defining monomial of degree $N$. Fix $0 < \ell \leq N$. A $T$-invariant function $g \colon Y \to \R$ vanishes below order $\ell$ at $p := [1,0,0]$ modulo  $\Phi_Y$  if and only if the reduced Taylor polynomial $\overline{T^{\ell-1}_p g} \colon \YmodT \to \R $ is identically zero.
\end{Lemma}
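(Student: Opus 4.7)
The plan is to use Lemma~\ref{lemma:hgs-r}\eqref{item:R-van} to translate the claim into an equivalent statement on $\C^{h+1}$: by that lemma, $g$ vanishes below order $\ell$ at $p$ modulo $\Phi_Y$ if and only if $R^*g$ vanishes below order $\ell$ at $0$ modulo $\Phi_H$. Combined with the definitions of $\overline{T^{\ell-1}_p g}$ and of the identification $\YmodT \simeq \Phi_H^{-1}(0)/H$, this reduces the lemma to: an $H$-invariant smooth function $f$ on $\C^{h+1}$ lies in $\mathcal I_{\Phi_H} + Q^\ell_0$ if and only if $T^{\ell-1}_0 f$ vanishes on $\Phi_H^{-1}(0)$.

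The forward implication is routine and mimics the homogeneity argument from the proof of Lemma~\ref{lemma:hgs-r}\eqref{item:taylor-in-ideal}: writing $f = h + \tilde q$ with $h = \sum_i \epsilon_i \Phi_H^{\xi_i} \in \mathcal I_{\Phi_H}$ and $\tilde q \in Q_0^\ell$, the degree-two homogeneity of each $\Phi_H^{\xi_i}$ yields that $T^{\ell-1}_0 f = T^{\ell-1}_0 h$ lies in the polynomial ideal generated by the $\Phi_H^{\xi_i}$'s, hence vanishes on $\Phi_H^{-1}(0)$.

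For the converse, set $q := T^{\ell-1}_0(R^*g)$, an $H$-invariant polynomial of degree at most $\ell - 1 < N$. The key structural observation is that an $H$-invariant monomial $z^\alpha \bar z^\beta$ satisfies $\alpha - \beta \in \Z \cdot \xi$ (by dualizing the short exact sequence \eqref{eq:3}), hence is a product of $|z_i|^2$'s with a non-negative power of $P$ or $\bar P$. Since both $P$ and $\bar P$ have degree $N$, the degree constraint forces $q$ to be a polynomial in the variables $r_i := |z_i|^2$ alone. The hypothesis then says that $q \in \R[r_0, \ldots, r_h]$ vanishes on the cone $C := \{r \in \R^{h+1}_{\geq 0} \mid \sum_i r_i \eta_i = 0\}$. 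Because $\rho$ is injective, the weights $\eta_i$ span $\fh^*$, so the linear subspace $L := \{r \in \R^{h+1} \mid \sum_i r_i \eta_i = 0\}$ is one-dimensional; because $Y$ is tall, $L = \R \cdot \xi$ with $\xi \in \R^{h+1}_{\geq 0}$. Hence $C \supseteq \R_{\geq 0} \cdot \xi$, so $t \mapsto q(t\xi)$ is a polynomial vanishing on a half-line and thus identically. So $q$ vanishes on all of $L$, which places $q$ in the ideal in $\R[r_0, \ldots, r_h]$ generated by any linear forms cutting out $L$ --- but those are precisely the components of $\Phi_H$ (as linear forms in the $r_i$'s, using $\sum_i \xi_i \eta_i = 0$). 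Since polynomials in $r_i = |z_i|^2$ are automatically $H$-invariant, we conclude $q \in \mathcal I_{\Phi_H}$, and $R^*g - q \in Q^\ell_0$ finishes the argument.

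The main obstacle is the structural description of $H$-invariant polynomials of degree less than $N$; once it is in place, the remaining steps are essentially linear algebra. In particular, the one-dimensionality of $L$ --- a consequence of the complexity-one hypothesis and the injectivity of $\rho$ --- is what lets us upgrade vanishing on the ray $C$ to vanishing on all of $L$; without it, we would have to worry about whether $C$ has interior in $L$, which need not be the case for general tall local models.
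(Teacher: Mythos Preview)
Your proof is correct. Both directions go through as you describe, and your reduction via Lemma~\ref{lemma:hgs-r}\eqref{item:R-van} to the $H$-equivariant setting on $\C^{h+1}$ is clean.

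The paper's argument for the converse is organized slightly differently. Rather than working on $\C^{h+1}$ and classifying $H$-invariant monomials via the short exact sequence~\eqref{eq:3}, the paper lifts $T^{\ell-1}_0(R^*g)$ to a $T$-invariant polynomial $G$ on $Y$ of degree $<\ell$ and invokes \cite[Lemma~2.11]{ST1}, which says the algebra of $T$-invariant polynomials on $Y$ is generated by $\Re P$, $\Im P$, the \emph{single} function $|z|^2$, and the components of $\Phi_Y$. The degree constraint kills $\Re P,\Im P$, so $G$ can be written as something in the ideal of $\Phi_Y$ plus $Q(|z|^2)$ for a one-variable polynomial $Q$; then tallness gives a point $w\neq 0$ in $\Phi_H^{-1}(0)$, so $|z|^2$ surjects onto $\R_{\geq 0}$ along the level set and $Q\equiv 0$. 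The endgame is then identical to yours, using Lemma~\ref{lemma:hgs-r}\eqref{item:R-van}.

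Your route has the virtue of being self-contained --- you reprove the needed piece of invariant theory (that $H$-invariant polynomials of degree $<N$ lie in $\R[|z_0|^2,\dots,|z_h|^2]$) directly from~\eqref{eq:3}, rather than quoting \cite[Lemma~2.11]{ST1}. The paper's route is shorter once that lemma is in hand, because using the single generator $|z|^2$ collapses the linear-algebra step (vanishing on a line in $\R^{h+1}$) to a one-variable polynomial argument. One small remark: your closing comment that ``$C$ need not have interior in $L$ for general tall local models'' is a little off --- for any tall complexity-one local model $L$ is one-dimensional and $C$ contains the ray $\R_{\geq 0}\cdot\xi$; the worry you describe would arise only in higher complexity.
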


\begin{proof}
Assume that there exists $h \in \mathcal{I}_{\Phi_Y}$ such that $g- h \in Q^{\ell}_p$. Then  $R^*(g - h) \in Q^{\ell}_0$ by \eqref{eq:natural}, and so $T^{\ell-1}_{0}(R^*g) = T^{\ell-1}_{0}(R^*h)$. By Lemma \ref{lemma:hgs-r}, part \eqref{item:taylor-in-ideal}, the Taylor polynomial $T^{\ell-1}_{0}(R^*h)$  vanishes on $\Phi^{-1}_H(0)$.
Hence, the reduced Taylor polynomial $\overline{T^{\ell-1}_p g} \colon \YmodT \to \R $ is identically zero.

Conversely, assume that reduced Taylor polynomial $\overline{T^{\ell-1}_p g} \colon \YmodT \to \R $ is identically zero.
Since $T^{\ell-1}_0 (R^* g)$ is an $H$-invariant polynomial on $\C^{h+1}$ of degree less than $\ell$,
there exists a unique $T$-invariant polynomial $G$ on $Y$ of degree less than $\ell$
such that $G[t,\alpha,z] = T^{\ell-1}_{0} (R^* g)(z)$; in particular,
 $\overline{G} \colon \YmodT \to R$
is identically zero. We claim that $G \in \mathcal I_{\Phi_Y}$. To see this, recall that by \cite[Lemma 2.14]{ST1} the algebra of $T$-invariant polynomials on $Y$
is generated by the real and imaginary parts of the defining monomial $P \colon Y \to \C$, the map $[t,\alpha,z] \mapsto |z|^2$, and the components of  $\Phi_Y$. Since  the degree of $G$ is less than $N$, this implies that
$G$ lies in the algebra of $T$-invariant polynomials generated by the map $[t,\alpha, z] \mapsto |z|^2$ and the components of the moment map $\Phi_Y$. Hence, we can write $G$ as the sum of an element in the ideal  generated by the components of $\Phi_Y$ and the map $[t,\alpha,z] \mapsto Q(|z|^2)$ for some polynomial $Q$.  Recall that $\Phi_Y([t,\alpha,z]) = \alpha + \Phi_H(z)$, where $\Phi_H \colon \C^{h+1} \to \fh^*$ is the homogeneous moment map for the $H$-action on $\C^{h+1}$. Hence, since $Y$ is tall, there exists $w \in \C^{h+1} \smallsetminus \{0\}$ such that $[1,0,w] \in \Phi_Y^{-1}(0)$. Therefore the element $[1,0,\tau w_0]$ lies in $ \Phi_Y^{-1}(0)$ for all $\tau \in \R$, and so the restriction of the function $[t,\alpha,z] \mapsto |z|^2$ to $\Phi_Y^{-1}(0)$ attains every non-negative number.  Hence, $Q(|z|^2) = 0$ for all $[t,0,z] \in \Phi_Y^{-1}(0)$ exactly if $Q \equiv 0$, and so $G \in \mathcal I_{\Phi_Y}.$

Since $G \in \mathcal I_{\Phi_Y}$, we have that $T^{\ell -1}_{0} (R^*g) = R^* G \in R^*\mathcal{I}_{\Phi_Y} = \mathcal{I}_{\Phi_H}$, and so $R^*g$ vanishes below order $\ell$ at $0$ modulo $\Phi_H$. By Lemma~\ref{lemma:hgs-r}, part \eqref{item:R-van},  this implies that $g$ vanishes below order $\ell$ at $p$ modulo $\Phi_Y$.
\end{proof}

Let $\mathcal{I}$ be a subspace of $C^{\infty}(M)$ and let $g \in C^{\infty}(M)$ vanish below order $\ell$ at $p \in M$ modulo $\mathcal{I}$. The {\bf jet of $\boldsymbol{g}$ of order $\boldsymbol{\ell}$ at $\boldsymbol{p}$ modulo $\bm{\mathcal{I}}$} is the function $j^{\ell}_p(g;\mathcal{I}) \colon V_{\mathcal{I},p} \to \R$ given by 
\begin{equation}
    \label{eq:relative}
    j^\ell_p(g;\mathcal{I})(v):= j^{\ell}_p(g-h)(v) \quad \text{for all } v \in V_{\mathcal{I},p},
\end{equation}
\noindent
where $h \in \mathcal{I}$ and $g - h \in Q^{\ell}_p$. This function does not depend on $h$ because if  $\tilde{h} \in \mathcal{I}$ and $g - \tilde{h} \in Q^{\ell}_p$, then $h - \tilde{h} \in Q^{\ell}_p \cap \mathcal{I}$ and so  $$j^{\ell}_p(g-\tilde{h}) - j^{\ell}_p(g-h) = j^\ell_p(h - \tilde{h})$$
vanishes identically on $V_{\mathcal{I},p}$.

Now assume that $T$ acts on $M$, that $\mathcal{I}$ is a subspace of the algebra of $T$-invariant smooth functions, and that $g$ is $T$-invariant. If $g$ vanishes below order $\ell$ at $p$ modulo $\mathcal{I}$, then  $j^{\ell}_p(g;\mathcal{I})$ is invariant under addition by elements of $T_p \mathfrak O$ and under the slice representation of the stabilizer $H$ of $p$. Hence, the function $j^{\ell}_p(g;\mathcal{I})$ induces a map $ \overline{j^{\ell}_p(g;\mathcal{I})} \colon (V_{\mathcal{I},p}/T_p \mathfrak{O})/ H \to \R$ by setting
\begin{equation}
    \label{eq:reduced_jet}
    \overline{j^{\ell}_p(g;\mathcal{I})}([v + T_p \mathfrak{O}]) := j^{\ell}_p(g;\mathcal{I})(v) \quad \text{for all } v \in V_{\mathcal{I},p}.
\end{equation}

In the context of a $T$-action on a symplectic manifold $(M,\omega)$ with moment map $\Phi \colon M \to \ft^*$, to simplify notation, we denote the jet of $g$ of order $\ell$ at $p$ modulo  $\Phi$ by $j^\ell_p(g;\Phi)$. As we show below, on a local model  we can calculate this jet by computing
the reduced Taylor polynomial.

\mute{Lemma \ref{cor:vanishing_set} is used in Proposition \ref{cor:criterion}.}

\begin{Lemma}\label{cor:vanishing_set}
Let $Y = T \times_H \fh^\circ \times \C^{h+1}$ be a local model with moment map $\Phi_Y \colon Y \to \ft^*$. Let $\mathfrak{O}$ be the $T$-orbit through $p := [1,0,0]$. There is a homeomorphism from
 $(V_{\Phi_Y,p}/T_p \mathfrak O)/H$ to  $Y /\! / T$ that identifies $\overline{j_p^\ell(g; \Phi_Y)}$ and $\overline{T^{\ell}_p g}$
 for every $T$-invariant function $g \colon Y \to \R$ that vanishes below order $\ell \in \N$ at $p$ modulo  $\Phi_Y$.
\end{Lemma}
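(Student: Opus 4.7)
The plan is to build the homeomorphism and the identification of functions in two steps, each essentially repackaging information we have already obtained about the local model.

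First I would construct the homeomorphism. Choose the slice coordinate chart $\chi\colon \fh^{\perp} \times \fh^{\circ} \times \C^{h+1} \to Y$, $(\eta,\alpha,z) \mapsto [\exp(\eta),\alpha,z]$, which is a diffeomorphism onto a neighborhood of $p$. Under the induced identification $T_p Y \simeq \fh^{\perp} \oplus \fh^{\circ} \oplus \C^{h+1}$, the orbit tangent space $T_p\mathfrak{O}$ is $\fh^{\perp} \oplus \{0\} \oplus \{0\}$ and $D_0 R$ is the inclusion of $\C^{h+1}$ as the third factor. By Lemma~\ref{lemma:hgs-r}\eqref{item:quot-van}, $V_{\Phi_Y,p} = D_0 R(\Phi_H^{-1}(0)) + T_p\mathfrak{O}$, and the two summands meet only at $0$; hence $V_{\Phi_Y,p}/T_p\mathfrak{O}$ is homeomorphic to $\Phi_H^{-1}(0)$ via $D_0 R$, and $H$-equivariantly so. Quotienting by $H$ yields $(V_{\Phi_Y,p}/T_p\mathfrak{O})/H \cong \Phi_H^{-1}(0)/H = \C^{h+1}/\!/H$. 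On the other side, since $\ft^* = \fh^{\circ} \oplus \fh^*$, the condition $\Phi_Y([t,\alpha,z]) = \alpha + \Phi_H(z) = 0$ forces $\alpha = 0$ and $\Phi_H(z) = 0$; thus $\Phi_Y^{-1}(0) = T \times_H (\{0\} \times \Phi_H^{-1}(0))$, giving $Y/\!/T \cong \Phi_H^{-1}(0)/H$. I would take the composition of these two natural identifications with $\Phi_H^{-1}(0)/H$ as the desired homeomorphism.

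Next I would verify that this homeomorphism carries $\overline{j_p^\ell(g;\Phi_Y)}$ to $\overline{T_p^\ell g}$. Since $g$ vanishes below order $\ell$ at $p$ modulo $\Phi_Y$, pick $h \in \mathcal{I}_{\Phi_Y}$ with $g - h \in Q_p^\ell$; by definition of $\mathcal{I}_{\Phi_Y}$, the function $h$ is $T$-invariant, so $g-h$ is too. For any $z \in \Phi_H^{-1}(0)$, the element $D_0 R(z) \in V_{\Phi_Y,p}$ is a representative of the class corresponding to $[1,0,z] \in Y/\!/T$, so by \eqref{eq:relative}
\[
\overline{j_p^\ell(g;\Phi_Y)}\bigl([1,0,z]\bigr) = j_p^\ell(g - h)\bigl(D_0 R(z)\bigr).
\]
Applying naturality \eqref{eq:natural} to the smooth map $R\colon \C^{h+1} \to Y$ (using $g - h \in Q_p^\ell$ so $R^*(g-h) \in Q_0^\ell$), the right-hand side equals $T_0^\ell(R^*(g-h))(z) = T_0^\ell(R^*g)(z) - T_0^\ell(R^*h)(z)$.

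Finally, by Lemma~\ref{lemma:hgs-r}\eqref{item:taylor-in-ideal}, the polynomial $T_0^\ell(R^*h)$ vanishes on $\Phi_H^{-1}(0)$, so the second term is zero for our chosen $z$. Hence
\[
\overline{j_p^\ell(g;\Phi_Y)}\bigl([1,0,z]\bigr) = T_0^\ell(R^*g)(z) = \overline{T_p^\ell g}\bigl([1,0,z]\bigr),
\]
as required. The main thing to be careful about is the bookkeeping: checking that the chosen representative $D_0 R(z)$ is indeed the right one under the quotient identification, that the $T$-invariance of $h$ (not just $g$) is what legitimizes moving between $j_p^\ell$ and $\overline{j_p^\ell}$ via $T_p\mathfrak{O}$-invariance, and that invoking naturality for $R$ is valid because $R(0) = p$. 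Beyond these accounting checks, the argument is a direct composition of \eqref{item:quot-van} and \eqref{item:taylor-in-ideal} of Lemma~\ref{lemma:hgs-r} with the naturality of jets.
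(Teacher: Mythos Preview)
Your proof is correct and follows essentially the same approach as the paper's own argument: both construct the homeomorphism by combining Lemma~\ref{lemma:hgs-r}\eqref{item:quot-van} with the natural identification $Y/\!/T \cong \Phi_H^{-1}(0)/H$, and both verify the function identification via the chain of equalities built from the definition of the reduced jet \eqref{eq:relative}--\eqref{eq:reduced_jet}, naturality \eqref{eq:natural} applied to $R$, and Lemma~\ref{lemma:hgs-r}\eqref{item:taylor-in-ideal}. The paper runs the chain of equalities in the opposite direction (starting from $\overline{T_p^\ell g}$ rather than $\overline{j_p^\ell(g;\Phi_Y)}$), but the ingredients and logic are identical.
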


\begin{proof}
    By Lemma~\ref{lemma:hgs-r}, part \eqref{item:quot-van}, the map $\Phi_H^{-1}(0) /H \to (V_{\Phi_Y,p}/T_p \mathfrak O)/H$ given by $[z] \mapsto [D_{0} R(z) + T_p \mathfrak O]$ is a homeomorphism.
Since the  map $\YmodT \to \Phi^{-1}_H(0)/H$ given by $[t,0,z] \mapsto [z]$ is  also a homeomorphism,  this  implies that  the map $\YmodT \to (V_{\Phi_Y,p}/T_p \mathfrak O)/H$ given by $[t,0,z] \mapsto [D_{0} R(z) + T_p \mathfrak O]$ is a homeomorphism.

If $g \colon Y \to \R$ is a $T$-invariant function that vanishes below order $\ell$ at $p$ modulo  $\Phi_Y$, then there exists $h \in \mathcal{I}_{\Phi_Y}$  such that $g - h \in Q^{\ell}_p$. Hence, for any $[t,0,z] \in \YmodT$,
    \begin{equation*}
        \begin{split}
            \overline{T^{\ell}_p g}([t,0,z]) &= T^{\ell}_{0}(R^*g)(z) = T^{\ell}_{0}(R^*g - R^*h)(z) \\
            &=j^{\ell}_{0}(R^*(g-h))(z) =j^{\ell}_p(g-h)(D_{0}R(z)) \\
            &=\overline{j^{\ell}_p(g;\mathcal{I}_{\Phi_Y})}([D_{0} R(z) + T_p \mathfrak O]),
        \end{split}
    \end{equation*}
    where the first equality follows from the definition of reduced Taylor polynomial; the second from 
     the facts that $z \in \Phi^{-1}_H(0)$ and the Taylor polynomial $T^{\ell}_{0}(R^*h)$ vanishes on $\Phi^{-1}_H(0)$     
    by Lemma \ref{lemma:hgs-r}, part \eqref{item:taylor-in-ideal}; the third from the fact that $R^*(g - h)$ lies in $Q^{\ell}_{0}$ by \eqref{eq:natural}; the fourth from \eqref{eq:natural}; and the last from \eqref{eq:relative} and \eqref{eq:reduced_jet}.
\end{proof}

 By combining Lemmas \ref{lemma:vanishing_equivalence} and \ref{cor:vanishing_set} we obtain the main result of this section: an intrinsic characterization of ephemeral points.

\mute{Proposition \ref{cor:criterion} is used in Proposition \ref{prop:example}.}
\begin{Proposition}\label{cor:criterion}
Let $\left(M,\omega, \ft \times \R, f:=(\Phi,g)\right)$ be an integrable system such that $(M,\omega,\Phi)$ is a complexity one $T$-space. Assume that $p \in \Phi^{-1}(0) \cap g^{-1}(0)$ is tall and that the  degree $N$ of the defining monomial of the local model for $p$ is greater than one. Then the following are equivalent:
\begin{enumerate}[label=(\arabic*),ref=(\arabic*)]
    \item \label{item:ephemeral} $p$ is ephemeral, i.e., conditions  (a) and (b) of Definition \ref{eph} hold for {\it  some} a $T$-equivariant symplectomorphism from a $T$-invariant neighborhood of $p$ to an open subset of $Y$ that takes $p$ to $[1,0,0]$,
    \item \label{item:forall} conditions (a) and (b) of Definition \ref{eph} hold for {\it every} $T$-equivariant symplectomorphism from $T$-invariant neighborhood of $p$ to an open subset of $Y$ that takes $p$ to $[1,0,0]$, and
    \item \label{item:intrinsic} $g$ vanishes below order $N$ at $p$ modulo $\Phi$  and the zero set of $\overline{j_p^N(g;\Phi)} \colon (V_{\Phi}/T_p \mathfrak{O})/H \to \R$ is homeomorphic to $\R$.
\end{enumerate}
\end{Proposition}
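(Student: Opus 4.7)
The plan is to prove the chain of implications $(1) \Rightarrow (3) \Rightarrow (2) \Rightarrow (1)$. Since $(2) \Rightarrow (1)$ is immediate from the existence of a Marle-Guillemin-Sternberg chart, the real content is in $(1) \Rightarrow (3)$ and $(3) \Rightarrow (2)$. The strategy is to use the two technical lemmas just proved: Lemma~\ref{lemma:vanishing_equivalence} converts condition~\ref{item:a} of Definition~\ref{eph} (vanishing of the reduced Taylor polynomial $\overline{T^{N-1}_p g}$) into the intrinsic statement that $g$ vanishes below order $N$ at $p$ modulo $\Phi$; Lemma~\ref{cor:vanishing_set} then identifies the reduced Taylor polynomial $\overline{T^N_p g}$ with the reduced jet $\overline{j^N_p(g;\Phi)}$ via a homeomorphism between $\YmodT$ and $(V_{\Phi,p}/T_p\torbit)/H$.

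For $(1) \Rightarrow (3)$, I would fix the $T$-equivariant symplectomorphism from a $T$-invariant neighborhood of $p$ onto an open subset of $Y$ provided by~\ref{item:ephemeral}, and use it to identify $M$ locally with $Y$, $p$ with $[1,0,0]$, and $\Phi$ with $\Phi_Y$. Condition~\ref{item:a} of Definition~\ref{eph} says $\overline{T^{N-1}_p g} \equiv 0$ on $\YmodT$, so Lemma~\ref{lemma:vanishing_equivalence} (applied with $\ell = N$, which is allowed since $0 < N \leq N$) gives that $g$ vanishes below order $N$ at $p$ modulo $\Phi_Y$, and hence modulo $\Phi$ by naturality. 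Then Lemma~\ref{cor:vanishing_set}, applied with $\ell = N$, produces a homeomorphism $\YmodT \to (V_{\Phi_Y,p}/T_p\torbit)/H$ carrying $\overline{T^N_p g}$ to $\overline{j^N_p(g;\Phi_Y)}$. Condition~\ref{item:b} of Definition~\ref{eph} says the zero set of $\overline{T^N_p g}$ is homeomorphic to $\R$, and the homeomorphism transports this to the zero set of $\overline{j^N_p(g;\Phi)}$, giving~\ref{item:intrinsic}.

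For $(3) \Rightarrow (2)$, the argument runs in reverse and is where we exploit that~\ref{item:intrinsic} is intrinsic (it makes no reference to a local model). Fix any $T$-equivariant symplectomorphism from a $T$-invariant neighborhood of $p$ to an open subset of a local model $Y$ taking $p$ to $[1,0,0]$. The statement in~\ref{item:intrinsic} pulls back along this identification unchanged, so $g$ vanishes below order $N$ at $[1,0,0]$ modulo $\Phi_Y$ and the zero set of $\overline{j^N_{[1,0,0]}(g;\Phi_Y)}$ is homeomorphic to $\R$. Applying Lemma~\ref{lemma:vanishing_equivalence} with $\ell = N$ yields condition~\ref{item:a} of Definition~\ref{eph}, and applying Lemma~\ref{cor:vanishing_set} transfers the homeomorphic-to-$\R$ statement from the zero set of $\overline{j^N_p(g;\Phi_Y)}$ to that of $\overline{T^N_p g}$, yielding condition~\ref{item:b}.

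There is no serious obstacle here: the content has been front-loaded into Lemmas~\ref{lemma:vanishing_equivalence} and~\ref{cor:vanishing_set}, and what remains is the bookkeeping of translating between the ``extrinsic'' language of reduced Taylor polynomials in a local model and the ``intrinsic'' language of jets modulo $\Phi$. The one point to articulate carefully is that condition~\ref{item:intrinsic} is manifestly defined in terms of $(M,\omega,\Phi,g,p)$ alone, so the same condition~\ref{item:intrinsic} must hold whichever local model identification we pick, which is exactly what forces the ``every'' in~\ref{item:forall}.
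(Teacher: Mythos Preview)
Your proposal is correct and follows essentially the same approach as the paper: you prove the cycle $(1)\Rightarrow(3)\Rightarrow(2)\Rightarrow(1)$, with $(2)\Rightarrow(1)$ immediate from the existence of a Marle--Guillemin--Sternberg chart, and the two nontrivial implications obtained by applying Lemmas~\ref{lemma:vanishing_equivalence} and~\ref{cor:vanishing_set} together with the naturality of the intrinsic constructions (equations~\eqref{eq:natural} and~\eqref{eq:natural2} in the paper). The paper compresses this into two sentences, but the content is the same.
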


\begin{proof} By the Marle-Guillemin-Sternberg local normal form theorem, \ref{item:ephemeral} is an immediate consequence of \ref{item:forall}.
By the naturality of our constructions (see \eqref{eq:natural} and \eqref{eq:natural2}),
the implications \ref{item:ephemeral} $\Rightarrow$ \ref{item:intrinsic} and \ref{item:intrinsic} $\Rightarrow$ \ref{item:forall} follow immediately from Lemmas \ref{lemma:vanishing_equivalence} and \ref{cor:vanishing_set}. 
\end{proof}

 Finally, we show how to explicitly check whether the intrinsic criteria
described in Proposition~\ref{cor:criterion}, part \ref{item:intrinsic} are satisfied.
As we see in Section~\ref{sec:examples}, this allows us to determine in practice which points are ephemeral.

\mute{Lemma~\ref{lemma:calculate} is used in Section 7.}
\begin{Lemma}\label{lemma:calculate}
    Let $(M,\omega,\Phi)$ be a complexity one $T$-space. Fix $p \in \Phi^{-1}(0)$.
    \begin{enumerate}[label=(\Alph*),ref=\Alph*]
        
    \item \label{item:quotient} If $\mathfrak{O}$ is the $T$-orbit through $p$ and $H$ is the stabilizer of $p$, then
 $$ (V_{\Phi,p}/T_p \mathfrak{O})/H = \hat{\Phi}^{-1}(0)/H ,$$
 where $\hat{\Phi} \colon (T_p \mathfrak O)^{\omega}/T_p\mathfrak O \to \fh^*$ is the homogeneous
 moment map for the symplectic slice representation. 
 \item \label{item:relative-jet} Let $g \colon M \to \R$ be a $T$-invariant function that vanishes below order $\ell \in \N$ at $p$ modulo $\Phi$. Then for any $h \in \mathcal{I}_{\Phi}$ such that $g - h \in Q^{\ell}_p$ and for any coordinate chart $\varphi$ centered at $p$
 $$ \overline{j_p^\ell(g; \mathcal{I})}([v + T_p \mathfrak{O}]) = T_{\varphi(p)}^\ell (g \circ \varphi^{-1}\! -\! h \circ \varphi^{-1})(D_p \varphi(v))$$
 for all $v \in V_{\Phi,p}$.
 \end{enumerate}
\end{Lemma}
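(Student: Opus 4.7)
My proof is essentially bookkeeping. Part \ref{item:quotient} reduces to an application of Lemma \ref{lemma:hgs-r}, part \eqref{item:quot-van}, via the Marle-Guillemin-Sternberg (MGS) local normal form theorem, and part \ref{item:relative-jet} follows by unwinding definitions.

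For part \ref{item:quotient}, I plan to use MGS to fix a $T$-equivariant symplectomorphism $\Psi$ from a $T$-invariant neighborhood of $p$ to a neighborhood of $[1,0,0]$ in the local model $Y = T \times_H \fh^{\circ} \times \C^{h+1}$, sending $p$ to $[1,0,0]$. The differential $D_p \Psi$ is then an $H$-equivariant symplectic linear isomorphism that sends $T_p \mathfrak{O}$ to $T_{[1,0,0]} \mathfrak{O}_Y$, where $\mathfrak{O}_Y$ denotes the $T$-orbit through $[1,0,0]$ in $Y$, and so it descends to an $H$-equivariant symplectic isomorphism of slices which intertwines $\hat{\Phi}$ with $\Phi_H$ under the standard identification $(T_{[1,0,0]} \mathfrak{O}_Y)^\omega/T_{[1,0,0]} \mathfrak{O}_Y \simeq \C^{h+1}$. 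The naturality property \eqref{eq:natural2} gives $D_p \Psi(V_{\Phi, p}) = V_{\Phi_Y, [1,0,0]}$, and Lemma \ref{lemma:hgs-r}, part \eqref{item:quot-van}, then identifies the latter with $D_0 R(\Phi_H^{-1}(0)) + T_{[1,0,0]}\mathfrak{O}_Y$. Quotienting successively by $T_p \mathfrak{O}$ and by $H$ therefore yields $(V_{\Phi,p}/T_p \mathfrak{O})/H \simeq \Phi_H^{-1}(0)/H = \hat{\Phi}^{-1}(0)/H$, as claimed.

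For part \ref{item:relative-jet}, I would just chase the definitions. Since $g - h \in Q_p^\ell$, equation \eqref{eq:relative} gives $j_p^\ell(g; \mathcal{I})(v) = j_p^\ell(g - h)(v)$ for every $v \in V_{\Phi,p}$; the defining formula \eqref{eqn:defn_jet} then evaluates this to $(D_p\varphi)^*(T^\ell_0((g-h)\circ \varphi^{-1}))(v)$, which equals $T^\ell_0(g\circ \varphi^{-1} - h \circ \varphi^{-1})(D_p\varphi(v))$. Since $\varphi$ is centered at $p$, we have $\varphi(p) = 0$, so this agrees with $T^\ell_{\varphi(p)}(g\circ \varphi^{-1} - h\circ \varphi^{-1})(D_p\varphi(v))$, and substituting into \eqref{eq:reduced_jet} yields the desired formula.

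The one step requiring a moment of justification is the compatibility of $\hat{\Phi}$ with $\Phi_H$ under the slice isomorphism induced by MGS, but this is part of the standard content of the MGS theorem and is implicit in the way local models are set up in the introduction (and already exploited in Section \ref{section:jets}), so I do not expect any serious obstacle.
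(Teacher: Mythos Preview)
Your proposal is correct and follows essentially the same route as the paper: reduce to the local model via MGS and naturality, then invoke Lemma~\ref{lemma:hgs-r}\eqref{item:quot-van} for part~\ref{item:quotient}, and unwind \eqref{eqn:defn_jet}, \eqref{eq:relative}, \eqref{eq:reduced_jet} for part~\ref{item:relative-jet}. The only difference is cosmetic: the paper assumes $M=Y$ directly and spells out the identification $(T_p\mathfrak O)^\omega/T_p\mathfrak O \simeq \C^{h+1}$ via $D_0 R$ to verify $\hat\Phi \circ D_0 R = \Phi_H$, whereas you invoke this as standard MGS content.
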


\begin{proof}
Since $V_{\Phi,p}$ and $\hat \Phi$ are both natural with respect to $T$-equivariant
symplectomorphisms that intertwine moment maps, by the Marle-Guillemin-Sternberg local normal form theorem
we may assume that $M$ is a local model 
$Y = T \times_H \mathfrak h^\circ \times \C^{h+1}$, $\Phi = \Phi_Y$, and $p = [1,0,0]$.
We may identify $T_p Y$ with $\fh^{\perp} \times \fh^{\circ} \times \C^{h+1}$, equipped with the product of the standard symplectic forms on  $\fh^{\perp} \times \fh^{\circ} \subseteq \ft \times \ft^*$ and $\C^{h+1}$. Since $T_p \mathfrak O  = \fh^{\perp} \times \{0\} \times \{0\}$, this implies that $(T_p \mathfrak O)^{\omega} = \fh^{\perp} \times \{0\} \times \C^{h+1}$. Hence,  $D_0 R$ induces an $H$-equivariant symplectomorphism from $T_0 \C^{h+1} \simeq \C^{h+1}$ to the symplectic slice 
$(T_p \mathfrak O)^{\omega}/T_p\mathfrak O$. Therefore, $\hat \Phi( D_0 R(z)) = \Phi_H(z),$ and so
 $$ \hat \Phi^{-1}(0) = D_{0} R(\Phi_H^{-1}(0)) + T_p \mathfrak O.$$ 
 Part \eqref{item:quotient} now follows from Lemma~\ref{lemma:hgs-r}, part \eqref{item:quot-van}.  Part \eqref{item:relative-jet} is an immediate consequence of \eqref{eqn:defn_jet}, \eqref{eq:relative} and \eqref{eq:reduced_jet}.
\end{proof}

\section{A family of examples}\label{sec:examples}

\mute{In this section we use: Lemma~\ref{lemma:defn_poly}, Lemma \ref{trivial},  Lemma \ref{greg2}, Lemma \ref{lemma:calculate}, Proposition \ref{cor:criterion}.}
In this section we explicitly construct a natural family of integrable
systems that satisfy the criteria of the main result of this paper, Theorem \ref{thm:main}, but do not satisfy the criteria of \cite[Theorem 1.6]{ST1} because they have tall degenerate singular points. This shows that allowing ephemeral singular points meaningfully
extends the integrable systems that we can study.

Let the $(n-1)$-dimensional torus $T$ act on $\C^n$ via an injective homomorphism $\rho \colon T \to (S^1)^n$ with  proper moment map $\Phi \colon \C^n \to \ft^*$.
By Lemma~\ref{lemma:defn_poly}, there exists $\xi \in \Z^n$ such that $\lambda \in \rho(T) \varsubsetneq (S^1)^n$ exactly if $\prod_i \lambda_i^{\xi_i} = 1$. Define a function $g \colon \C^n \to \R$  by
$$g(z) =  \Im \left( \, \prod_{\xi_i > 0} z_i^{\xi_i} \prod_{\xi_i < 0} \overline{z}_i^{\, -\xi_i} \right), $$
where the first (respectively second) product is over all $i$ such that $\xi_i >0$ (respectively $\xi_i <0$).

\mute{03/05/24: we believe that the last sentence in Proposition \ref{prop:example} can be turned into an iff.}

\begin{Proposition}\label{prop:example}
The quadruple $(\C^n, \omega_{\C^n}, \ft \times \R, f:=(\Phi, g))$ is an integrable system such that $(\C^n, \omega_{\C^n}, \Phi)$ is a proper complexity one $T$-space.  Moreover, each tall singular point in $(\C^n, \omega_{\C^n}, \ft \times \R, f:=(\Phi, g))$ either has purely elliptic type or is ephemeral. Finally, the system has  degenerate ephemeral singular points if and only if $|\sum_{i \in I} \xi_i | > 2$ for some $I \subseteq \{1,\dots,n\}$.
\end{Proposition}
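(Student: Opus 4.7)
The plan is to verify the four assertions in turn, with the key computations concentrated in the classification of tall singular points. The first two statements are near-immediate: $\dim\C^n - 2\dim T = 2$ gives complexity one, and properness of $\Phi$ is assumed. For integrability of $f = (\Phi, g)$, $g$ is $T$-invariant because the defining property of $\xi$ yields $\prod_i \rho(\lambda)_i^{\xi_i} = 1$ for $\lambda \in T$, so $p(\lambda\cdot z) = p(z)$ and $g(\lambda\cdot z) = g(z)$; Poisson commutativity of the components of $f$ follows. For regularity of $f$ on a dense subset, at points with all $z_i \neq 0$, I would pass to polar coordinates $z_i = r_i e^{i\theta_i}$ and note that $D\Phi$ has only $dr_i$-components, while $Dg$ generically carries a nonzero $d\theta$-component proportional to $\sum_i \xi_i\, d\theta_i$ that $D\Phi$ cannot match; hence $Df$ has full rank off a proper subvariety.

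The heart of the argument is classifying tall singular points $z_0$. Set $J := I(z_0) = \{i : z_{0,i} = 0\}$ and let $H := H_{z_0}$ denote the stabilizer. A direct slice computation gives a decomposition of the symplectic slice as $\C^J \oplus \C^{d_{J^c}}$, where $H$ acts on $\C^J$ with weights $(\rho_i^*|_\fh)_{i\in J}$ and trivially on $\C^{d_{J^c}}$, with $d_{J^c} = |J^c| - \mathrm{rank}(\rho^*|_{J^c}) \in \{0,1\}$ by uniqueness of the relation $\sum_i \xi_i \rho_i^* = 0$. In the main case $d_{J^c} = 0$, Lemma~\ref{lemma:defn_poly} identifies $\xi^{(z_0)} = (\xi_i)_{i\in J}$ and the defining-polynomial degree $N_{z_0} = \sum_{i\in J}|\xi_i|$, with all these $\xi_i$ of like sign by tallness. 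Setting $w = z - z_0$, a direct expansion gives
$$p(z_0 + w) = C\cdot Q(w_J)\cdot (1 + E(w_{J^c})),$$
with $Q(w_J) = \prod_{\xi_i > 0} w_i^{\xi_i} \prod_{\xi_i < 0} \bar w_i^{-\xi_i}$ (indices in $J$), $C \neq 0$, and $E(0) = 0$; hence $g \in Q^{N_{z_0}}_{z_0}$ with $j^{N_{z_0}}_{z_0} g = \Im(C\, Q(w_J))$. When $N_{z_0} = 1$, $Dg_{z_0}$ is nonzero in the $w_{i_0}$-direction (the unique active index) which lies in $\ker D\Phi$, so $z_0$ is regular of $g$ modulo $\Phi$ and purely elliptic by Lemma~\ref{greg2}.

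When $N_{z_0} \geq 2$, condition (i) of the intrinsic criterion Proposition~\ref{cor:criterion} holds with $h = 0$; Lemma~\ref{lemma:calculate} identifies $(V_{\Phi,z_0}/T_{z_0}\mathfrak{O})/H$ with $\hat\Phi^{-1}(0)/H$, which by Lemma~\ref{trivial} is $\C$ with coordinate $\zeta := P^{(z_0)}(w_J)$. Tallness forces $Q = P^{(z_0)}$ (or its conjugate) up to sign, so the reduced $N_{z_0}$-jet becomes $\Im(C\zeta)$ or $\Im(C\bar\zeta)$, whose zero set is a real line through $0$, homeomorphic to $\R$; hence $z_0$ is ephemeral. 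The subcase $d_{J^c} = 1$ forces $N_{z_0} = 1$ (the single nontrivial defining-polynomial exponent sitting on the zero-weight factor), and a direct analysis---using that $g = |p|\sin(\arg p)$ with $|p|$ smooth---shows that any critical point of $g$ modulo $\Phi$ in this subcase is a local extremum of $\bar g$ on the reduced space, giving purely elliptic type. For the final assertion, if $|\sum_{i\in I}\xi_i| > 2$ for some $I$, splitting $I$ by the sign of $\xi_i$ shows that one of the same-sign subsets $I_+, I_-$ has $\sum |\xi_i| > 2$; taking $z_0$ with zero coordinates precisely on this subset lands in the $d_{J^c} = 0$ case with $N_{z_0} > 2$, producing a tall singular ephemeral point that is degenerate since Proposition~\ref{nondegenerate} bounds $N_{z_0} \leq 2$ at non-degenerate tall singular points. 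The main technical obstacle is the bookkeeping of the slice decomposition, especially verifying in the $d_{J^c} = 1$ subcase that critical $g$-modulo-$\Phi$ points avoid hyperbolic-with-connected-stabilizer structures.
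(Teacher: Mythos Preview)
Your overall strategy matches the paper's: split at a point $w$ according to whether the index set $J = \{i : w_i = 0\}$ supports any of the relation vector $\xi$, then in the ``$\xi$ supported on $J$'' case use the intrinsic ephemeral criterion (Proposition~\ref{cor:criterion} and Lemma~\ref{lemma:calculate}), and in the complementary case verify purely elliptic type. Your treatment of the former case (your $d_{J^c}=0$) is essentially the paper's argument: the $N$-jet of $g$ at $w$ is $\Im(C\,P^{(w)})$, which reduces to a nonzero real-linear function of the coordinate on $\Phi_H^{-1}(0)/H\cong\C$, so the zero set is a line and the point is ephemeral when $N\geq 2$; when $N=1$ the point is regular for $g$ modulo $\Phi$, hence purely elliptic by Lemma~\ref{greg2}. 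Your final-assertion argument (split $I$ by sign of $\xi_i$ to locate a tall point with $N>2$, then invoke Proposition~\ref{nondegenerate}) is also correct and slightly more explicit than the paper's wording.

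The genuine gap is your $d_{J^c}=1$ subcase, which is the paper's Case~1 ($\xi_i=0$ for all $i\in J$). You write that ``a direct analysis\dots shows that any critical point of $g$ modulo $\Phi$ in this subcase is a local extremum of $\bar g$ on the reduced space, giving purely elliptic type,'' and then concede this is the main technical obstacle. Two problems: first, ``local extremum of $\bar g$'' does not by itself yield purely elliptic type in the sense of Definition~\ref{defn:non-degenerate}---you need non-degeneracy of the linearized action, and the implication in Proposition~\ref{gcrit} runs the other way. Second, no argument is actually given. The paper does not find a shortcut here: it writes $g$ in polar coordinates, identifies the critical conditions $\cos(\sum_{j\notin J}\xi_j\theta_j)=0$ and $\sum_{j\notin J}\xi_j|\xi_j|/r_j^2=0$, subtracts a suitable $\Phi^\mu$ so that $\tilde g = g-\Phi^\mu$ has vanishing differential, and then explicitly computes the Hessian of $\tilde g$ on the $2$-dimensional slice $\operatorname{span}\big(\sum_{j}\xi_j\partial_{\theta_j},\ \sum_j(\xi_j/r_j)\partial_{r_j}\big)$. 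The resulting $2\times 2$ matrix is diagonal with both entries a nonzero multiple of $g(w)$ of the same sign, so $J A$ has distinct purely imaginary eigenvalues and the block is elliptic. This computation is the substance of the case; your proposal does not supply it.
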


\begin{proof}
Since $g$ is $T$-invariant by construction, and $\Phi$ is a proper moment map for the $T$-action, $f$ is the moment map of a $(T \times \R)$-action.

Consider a point $w = (w_1,\dots,w_n) \in \C^n$. Write  $w_j = (r_j, \theta_j)$ in polar coordinates for all $j$, and 
let \begin{equation}
   \label{eq:I}
    I := \{i \in \{1,\ldots,n\} \mid w_i = 0\}.
\end{equation}
We begin by summarizing our claims about $w$, which we prove below: 
First, if $I = \emptyset$, then $w$ is a singular point of $f$ exactly if \eqref{cond 1} and \eqref{cond 2} hold;
in particular, $f$ is regular on an open dense set in $\C^n$.
More generally,
$w$ is tall exactly if $\xi_i \xi_j \geq 0$ for all $i, j \in I$;
assume that this is the case. 
If $w$ is singular and $\sum_{i \in I } |\xi_i| \leq  1$, then $w$ has purely elliptic type.
In contrast,
if $\sum_{i \in I} |\xi_i| > 1$ then $w$ is  an ephemeral singular point;
while if $\sum_{i \in I} |\xi_i| > 2$, then $w$ is a degenerate point of $f$. Together these  claims imply the result.

Fix $w \in \C^n$, and set
\begin{equation}
\label{eq:spaces-I}
    \begin{split}
        (S^1)^{I} &:= \{ \lambda \in (S^1)^n \mid \lambda_j = 1  \text{ for all } j \notin I \} \quad \text{and} \\
        \C^{I} & := \{z \in \C^n \mid z_j = 0 \text{ for all } j \notin I\},
    \end{split}
\end{equation}
where $I$ is defined as in \eqref{eq:I}.  
 To prove the above claims, we split the proof in two cases, depending on whether $\sum_{i \in I} |\xi_i|$ vanishes or not.

{\bf Case 1:  $\sum_{i \in I} |\xi_i| = 0$.} Since $\xi_i = 0$ if $i \in I$, we may write
\begin{equation}
\label{eq:polar}
    \Phi(w) = \textstyle \frac{1}{2} \sum_j \eta_j r_j^2 \quad \mbox{and} \quad g(w) = \bigg( \prod\limits_{j \notin I} r_j^{|\xi_j|} \bigg) \sin \bigg( \sum\limits_{j \notin I} \xi_j \theta_j\bigg),
\end{equation}
where $\eta_1,\dots, \eta_n  \in \ft^*$ are the isotropy weights for the $T$-action. Hence,

\begin{subequations}
    \begin{align}
        \label{eq:der1} D_w \Phi &= \textstyle \sum_j \eta_j r_j dr_j,  \\
      \label{eq:span} \ker \, D_w \Phi  &= \C^I \oplus \operatorname{span}\bigg( \big\{\partial_{\theta_j} \mid j \notin I \big\}  \cup 
    \bigg\{  \textstyle \sum\limits_{j \notin I} \frac{\xi_j}{r_j} \partial_{r_j} \bigg\}  \bigg),\text{ and}\\
        d_w g &= \textstyle g(w) \bigg( \! \sum\limits_{j \notin I} \frac{|\xi_j|}{r_j} d r_j\bigg) + \bigg(\prod\limits_{j \notin I} r_j^{|\xi_j|} \bigg)\! \cos \!  \bigg(\!\sum\limits_{j \notin I} \xi_j \theta_j \! \bigg) \! \! \sum\limits_{j \notin I} \xi_j  d\theta_j. \label{eq:der2}
    \end{align}
\end{subequations}
Here, we identify $\C^I$ with the span of $\{ \del_{x_i}, \del_{y_i} \mid i \in I\}$.
Since  $ \xi_j \neq 0$  for some $j \notin I$, the differential $d_w g$ vanishes
on the span of  $\{\partial_{\theta_j} \mid j \not \in I \}$ exactly if
\begin{equation}
\label{cond 1}
    \textstyle \cos \bigg(\sum\limits_{j \notin I} \xi_j\theta_j\bigg) = 0.
\end{equation}

Moreover, if equation \eqref{cond 1} holds, then  $g(w) \neq 0$.  Hence, $w$ is a critical point of $g$ modulo $\Phi$, that is,  the derivative $d_w g$ vanishes on $\ker D_w \Phi$, if and only if \eqref{cond 1} holds and
\begin{equation}
    \label{cond 2}
    \sum\limits_{j \notin I} \frac{\xi_j |\xi_j|}{r_j^2} = 0.
\end{equation}
Furthermore,  if $I = \emptyset$  then  $D_w\Phi$  has rank $n-1$, and so  $w$ is singular for $f$  exactly if $w$ is a critical point of $g$ modulo $\Phi$,
that is, exactly if  \eqref{cond 1} and \eqref{cond 2} hold.

More generally, let $H$ be the $T$-stabilizer of $w$ and let $\mathfrak O$ be the $T$-orbit through $w$. Since $\sum_{i \in I} |\xi_i| = 0$, we have $H = (S^1)^I$ and 
  $$ T_w \mathfrak O = \big\{ \sum_{j \notin I} a_j \del_{\theta_j} \ \big| \ \sum_{j \notin I} \xi_j a_j = 0 \big\}.$$
Since $(T_w \mathfrak O)^{\omega} = \ker D_w \Phi$,
\eqref{eq:span} implies that,
 as symplectic vector spaces,
\begin{equation}
    \label{eq:slice}
    (T_w \mathfrak O)^{\omega}/T_w \mathfrak O \simeq \C^I \oplus \operatorname{span} \bigg(\sum_{j \notin I}  \xi_j \del_{\theta_j}, \sum_{j \notin I} \frac{\xi_j}{r_j} \partial_{r_j}\bigg) \simeq \C^I \oplus \R^2.
\end{equation}
Under this identification, $H$ acts on $\C^I$ by an isomorphism with $(S^1)^I$
and trivially on $\R^2$. In particular, $w$ is tall
by Lemma~\ref{lemma:defn_poly}.

Now assume that $w$ is a singular point for $f$. By Lemma \ref{greg2}, if $w$ is regular for $g$ modulo $\Phi$, then $w$ has purely elliptic type. Hence  we may assume that $w$ is a critical point of $g$ modulo $\Phi$. Since $d_w g$ vanishes on $\ker D_w \Phi$, there exists $\mu \in \ft$ such that $\tilde{g}:= g - \Phi^{\mu}$ has vanishing derivative at $w$. By \eqref{eq:der1}, 
$$\textstyle d_w \Phi^{\mu} = \sum_j \langle \eta_j ,\mu \rangle r_j dr_j = \sum_j \frac{\partial^2 \Phi^{\mu}}{\partial r^2_j}(w) r_j dr_j. $$ 
Since $d_w g = d_w \Phi^{\mu}$, by \eqref{eq:der2} and \eqref{cond 1}  this implies that $\textstyle \frac{\partial^2 \Phi^{\mu}}{\partial r_j^2} (w)= \textstyle \frac{|\xi_j|}{r^2_j} g(w)$ for all  $j \notin I$. Moreover, all other components of the Hessian $d^2_w \Phi^{\mu}$ vanish. Hence, by \eqref{eq:der2} and  \eqref{cond 1}, the only possibly non-zero components of the Hessian $d^2_w \tilde{g} = d^2_w g - d^2_w \Phi^{\mu}$ are given by 
\begin{equation}
\label{eq:hessian}
    \begin{split}
    \frac{\partial^2 \tilde{g}}{\partial \theta_j \partial \theta_k}(w) & = \textstyle -  \xi_j \xi_k g(w)  , \\
       \frac{\partial^2 \tilde{g}}{\partial r_j \partial r_k}(w)  & = \frac{|\xi_j| |\xi_k| }{r_j r_k} g(w)  - 2 \delta_{jk} \frac{|\xi_j|}{r^2_j}g(w),
    \end{split}
\end{equation}
where $\delta_{jk}$ is the Kronecker delta and $j,k \notin I$.

Since $w$ is a critical point of $g$ modulo $\Phi$, the $(T \times \R)$-orbit $\mathcal{O}$ through $w$ is equal to the $T$-orbit $\mathfrak{O}$, and so \eqref{eq:slice} also describes $(T_w \mathcal O)^{\omega}/T_w \mathcal O$ as a symplectic vector space. We may replace $f = (\Phi,g)$ by $\tilde{f}:=(\Phi,\tilde{g})$ because this does not change our analysis. Then the $(T \times \R)$-stabilizer of $w$ is $H \times \R$. Moreover, since $H$ acts on $\C^I$ by an isomorphism with $(S^1)^I$ with homogeneous moment map $\Phi_H \colon \C^I \to \fh^*$ and acts trivially on $\R^2$, the homogeneous moment map for the linearized action of $H \times \R$ at $w$ can be identified with the restriction of $(\Phi_H, d^2_w \tilde{g})$ to $\C^I \oplus \R^2$.  Finally, since $\Phi_H$ (respectively $d^2_w \tilde{g}$) only depends on coordinates of the first (respectively second) summand, and since $H$ acts in a toric fashion on $\C^I$, it suffices to consider the restriction of $d^2_w \tilde{g}$ to $\operatorname{span} (\sum_{j \notin I}  \xi_j \del_{\theta_j}, \sum_{j \notin I} \frac{\xi_j}{r_j} \partial_{r_j}) \simeq  \R^2$.

By \eqref{cond 2} and \eqref{eq:hessian}, this Hessian is the $2 \times 2$ diagonal matrix $A$  with non-zero entries  

\begin{equation*}
    \begin{split}
        d^2_w \tilde{g}\bigg(\sum_{j \notin I} \xi_j \partial_{\theta_j}, \sum_{k \notin I} \xi_k \partial_{\theta_k}  \bigg) & = - \bigg(\sum_{j \notin I} \xi^2_j\bigg)^2 g(w), \\
        d^2_w \tilde{g} \bigg(\sum_{j \notin I}\frac{\xi_j}{r_j} \partial_{r_j}, \sum_{k \notin I} \frac{\xi_k}{r_k} \partial_{r_k} \bigg) & = \bigg(\sum_{j \notin I} \frac{|\xi_j|\xi_j}{r_j^2}\Big)^2 g(w) -2 \bigg( \sum_{j \notin I} \frac{\xi_j^2|\xi_j|}{r_j^4}\bigg)g(w) \\
        & = -2 \bigg( \sum_{j \notin I} \frac{\xi_j^2|\xi_j|}{r_j^4}\bigg)g(w).
    \end{split}
\end{equation*}
Since $\xi \neq 0$ and since $g(w) \neq 0$ by \eqref{cond 1}, these diagonal entries are  non-zero and have the same sign.  Hence, if $J = \big(\begin{smallmatrix} 0 & -1 \\
1 & 0\end{smallmatrix}\big)$, then the product $JA$ has distinct non-zero imaginary eigenvalues. Hence,  $w$ has purely elliptic type; see, e.g., \cite[Remark 6.1]{ST1}.

{\bf Case 2:  $N:=\sum_{i \in I} |\xi_i| >  0$.} 
Let $H$ be the $T$-stabilizer of $w$ and let $\mathfrak O$ be the $T$-orbit through $w$.
Since there exists $i \in I$ with $\xi_i \neq 0$,
the map $(S^1)^{I} \to S^1$ taking $\lambda$ to $\prod_{i \in I} \lambda_i^{\xi_i}$ is surjective. Hence, $\dim T/H =
(n-1) - (|I|- 1) = n - |I|$, and so the tangent space $T_w \mathfrak O$ is the span of $\{\frac{\partial}{\partial \theta_j} \mid j \notin I\}$.  Then, as symplectic vector spaces
\begin{equation}
            \label{eq:slice2}
    (T_w \mathfrak O)^\omega/T_w \mathfrak O \simeq \C^{I},
\end{equation}
where $\C^I$ is defined as in \eqref{eq:spaces-I}. Moreover, $H$
acts on $\C^I$ through an
isomorphism with the subgroup 
$$\bigg\{\lambda \in (S^1)^I \, \bigg|  \, \prod_{i \in I} \lambda_i^{\xi_i} = 1 \bigg\}.$$
Hence, by  Lemma \ref{lemma:defn_poly}, $w$  is  tall
exactly if $\xi_i \xi_j \geq 0$ for all $i,j \in I$.

Since we are interested only in tall singular points,  we may assume that $\xi_i \geq 0$ for all $i \in I$; the argument when $\xi_i \leq 0$ for all $i$ is entirely analogous.
In this case, the defining monomial of the local model for $w$ is given by 
$P(z) = \prod_{i \in I} z_i^{\xi_i}$, and  has degree $N$. Moreover, every partial derivative of $g$ of degree less than $N$ vanishes at $w$. Moreover,  $\prod_{j \notin I} w_j^{\xi^+_j} \overline w_j^{\xi^-_j} \neq 0$, where $\xi^+_j = \operatorname{max} \{0, \xi_j\}$ and $\xi^-_j = \operatorname{max} \{0, -\xi_j\}$ for any $j \notin I$. Hence, there are precisely two partial derivatives of $g$ with respect to $z_1, \dots, z_n$ and $\overline z_1, \dots, \overline z_n$ of degree $N$ that do not vanish at $w$: 
\begin{equation*}
    \begin{split}
        \frac{ \partial^N g}{\partial^{\xi_{i_1}} z_{i_1} \ldots \partial^{\xi_{i_k}}z_{i_k}}(w) &=  -\frac{\sqrt{-1}}{2} \prod\limits_{i \in I }\xi_{i}! \prod\limits_{j \notin I } w_j^{\xi_j^+} \overline w_j^{\xi_j^-}, \\
        \frac{ \partial^N g}{\partial^{\xi_{i_1}} \overline{z}_{i_1} \ldots \partial^{\xi_{i_k}}\overline{z}_{i_k}}(w) &=  \frac{\sqrt{-1}}{2} \prod\limits_{i \in I}\xi_{i}! \prod\limits_{j \notin I } \overline w_j^{\xi_j^+} w_j^{\xi_j^-}, 
    \end{split}
\end{equation*}
where $I = \{i_1,\ldots, i_k\}$. Therefore the  degree $N$ Taylor polynomial of $g$ at $w$ is given by
   \begin{equation}
   \label{eq:taylorpoly}
       \textstyle T^N_w g(z) =\Im \left( \prod\limits_{j \notin I} w_j^{\xi^+_j} \overline w_j^{\xi^-_j}P(z) \right)\quad \forall \, z \in \C^n.
   \end{equation}

By \eqref{eq:slice2}, we may identify 
the symplectic slice $(T_w \mathfrak O)^\omega/T_w \mathfrak O$ with $\C^I$ so that the homogeneous moment map for the symplectic slice representation $\hat{\Phi} \colon (T_w \mathfrak O)^\omega/T_w \mathfrak O \to \fh^*$ becomes the homogeneous moment map $\Phi_H \colon \C^I \to \fh^*$ for the $H$-action on $\C^I$. 
If $N = 1$, then the restriction of \eqref{eq:taylorpoly} to the symplectic slice $\C^{I}$ is a non-trivial linear map. Hence,  $w$
is a regular point of $g$ modulo $\Phi$ and so, by  Lemma \ref{greg2}, $w$ has purely elliptic type. Therefore,  we may assume that $N >1$, and so $w$ is a singular point by Lemma \ref{greg2}. 
By \eqref{eq:taylorpoly} the function $g$ vanishes below order $N$ at $w$. Hence, replace $\Phi$ by $\Phi - \Phi(w)$, and then
apply Lemma \ref{lemma:calculate}, part \eqref{item:relative-jet} with the identity as a coordinate chart, and $h \equiv 0$; by equation \eqref{eq:taylorpoly}, the jet of $g$ of order $N$ at $w$ modulo $\Phi$
induces
the map $\overline{j^N_w(g;\Phi)} \colon (V_{\Phi,w}/T_w \mathfrak{O})/ H \to \R$ given by
\begin{equation}
    \label{eq:relative_jet}
    \textstyle 
\overline{j^N_w(g;\Phi)} ([z]) =  \Im \bigg( \prod\limits_{j \notin I} w_j^{\xi^+_j} \overline w_j^{\xi^-_j} P(z) \bigg) \quad \forall \, [z] \in  (V_{\Phi,w}/T_w \mathfrak{O})/ H.
\end{equation}
 
By Lemma \ref{lemma:calculate}, part \eqref{item:quotient}, we have that $(V_{\Phi,w}/T_w \mathfrak{O})/ H = \Phi^{-1}_H(0)/H$; moreover, by Lemma \ref{trivial} the map $\Phi^{-1}_{H}(0)/H \to \C$ given by $[z] \mapsto P(z)$ is a homeomorphism.  Using this map to identify $(V_{\Phi,w}/T_w \mathfrak{O})/ H$ with $\C$, we may rewrite \eqref{eq:relative_jet} as
$$\overline{j^N_w(g;\Phi)} (v) =  \Im \bigg( \prod\limits_{j \notin I} w_j^{\xi^+_j} \overline w_j^{\xi^-_j}v \bigg) \quad \forall \ v \in \C.$$
 Hence, the zero locus of $\overline{j^N_w(g;\Phi)}$ is homeomorphic to $\R$ and so, by Proposition \ref{cor:criterion}, $w$ is  ephemeral.

   Finally, if $N > 2$, then by \eqref{eq:taylorpoly} the degree $2$ Taylor polynomial of $g$  at $w$ vanishes on $\C^n$.
   Hence $w$ is a critical point of $g$ and so the $(T \times \R)$-stabilizer of $w$ is $H \times \R$.
Moreover, if $\mathcal{O}$ denotes the $(T \times \R)$-orbit through $w$, then the homogeneous moment map for the linearized action of $\R$ on $(T_w \mathcal{O})^{\omega}/T_w \mathcal O$ 
is trivial. Hence, $w$ is a degenerate singular point; see, e.g., \cite[Remark 6.1]{ST1}.  
\end{proof}

\end{document}